\documentclass{amsart}

\usepackage{hyperref}
\hypersetup{
    colorlinks=true,
    linkcolor=blue,
    citecolor=blue,
    urlcolor=blue,
}

\makeatletter
\newcommand\org@maketitle{}
\newcommand\@authors{}
\let\org@maketitle\maketitle
\def\maketitle{%
	\let\@authors\authors
	\nxandlist{; }{ and }{; }\@authors
	\hypersetup{
		linktocpage=true,
		pdftitle={\@title},
                pdfauthor={\@authors},
                pdfsubject={\subjclassname. \@subjclass},
		pdfkeywords={\@keywords}
	}%
	\org@maketitle
}
\makeatother

\usepackage{amssymb}
\usepackage{rsfso}
\usepackage{mathtools}
\usepackage{microtype}
\usepackage[alphabetic,msc-links]{amsrefs}
\usepackage{enumitem}
\usepackage{amsfonts}
\DeclareMathAlphabet{\mathcal}{OMS}{cmsy}{m}{n}

\usepackage{doi}
\renewcommand{\PrintDOI}[1]{\doi{#1}}

\urlstyle{same}

\numberwithin{equation}{section}

\newtheorem{theorem}{Theorem}[section]
\newtheorem{lemma}[theorem]{Lemma}
\newtheorem{corollary}[theorem]{Corollary}
\newtheorem{proposition}[theorem]{Proposition}
\theoremstyle{definition}
\newtheorem{definition}[theorem]{Definition}

\theoremstyle{remark}
\newtheorem{remark}[theorem]{Remark}

\newcommand{\cD}{{\mathcal D}}

\newcommand{\al}{\alpha}
\newcommand{\be}{\beta}

\newcommand{\la}{\lambda}
\newcommand{\ka}{\kappa}

\newcommand{\R}{\mathbb{R}}
\newcommand{\vp}{\varphi}

\newcommand{\bb}{\mathbf{b}}
\newcommand{\bg}{\mathbf{g}}
\newcommand{\bff}{\mathbf{f}}
\newcommand{\bq}{\mathbf{q}}

\newcommand{\D}{\nabla}

\renewcommand{\div}{\operatorname{div}}
\newcommand{\loc}{\mathrm{loc}}
\newcommand{\dist}{\operatorname{dist}}
\newcommand{\diam}{\operatorname{diam}}
\newcommand{\mean}[1]{\langle #1\rangle}
\newcommand{\lmean}[1]{\left\langle #1\right\rangle}

\def\Xint#1{\mathchoice
  {\XXint\displaystyle\textstyle{#1}}%
  {\XXint\textstyle\scriptstyle{#1}}%
  {\XXint\scriptstyle\scriptscriptstyle{#1}}%
  {\XXint\scriptscriptstyle\scriptscriptstyle{#1}}%
  \!\int}
\def\XXint#1#2#3{{\setbox0=\hbox{$#1{#2#3}{\int}$}
    \vcenter{\hbox{$#2#3$}}\kern-.5\wd0}}

\def\dashint{\Xint-}


\mathchardef\ordinarycolon\mathcode`\:
\mathcode`\:=\string"8000
\begingroup \catcode`\:=\active
  \gdef:{\mathrel{\mathop\ordinarycolon}}
\endgroup

\author{Hongjie Dong}
\author{Seongmin Jeon}
\author{Stefano Vita}

\address[H. Dong]{Division of Applied Mathematics
\newline\indent
Brown University
\newline\indent
182 George Street, Providence RI 02912, USA}
\email{hongjie\_dong@brown.edu}
\address[S. Jeon]{Department of Mathematics Education
\newline\indent
Hanyang University
\newline\indent
222 Wangsimni-ro, Seongdong-gu, Seoul 04763, Republic of Korea}
\email{seongminjeon@hanyang.ac.kr}
\address[S. Vita]{Dipartimento di Matematica ``F. Casorati"
\newline\indent
Universit\`a di Pavia
\newline\indent
Via Ferrata 5, 27100, Pavia, Italy}
\email{stefano.vita@unipv.it}

\title[Schauder type estimates for degenerate or singular equations]{Schauder type estimates for degenerate or singular elliptic equations with DMO coefficients}
\subjclass[2020]{35B45, 35B65, 35J70, 35J75}
\keywords{Degenerate or singular equations; Dini mean oscillation; higher-order boundary Harnack principle; Schauder type estimates; the Hopf Lemma}

\allowdisplaybreaks

\begin{document}
\begin{abstract}
    In this paper, we study degenerate or singular elliptic equations in divergence form
    $$
    -\div(x_n^\al A\D u)=\div(x_n^\al\bg)\quad\text{in }B_1\cap\{x_n>0\}.
    $$
    When $\al>-1$, we establish boundary Schauder type estimates under the conormal boundary condition on the flat boundary, provided that the coefficients satisfy Dini mean oscillation (DMO) type conditions. Additionally, as an application, we derive higher-order boundary Harnack principles for uniformly elliptic equations in divergence form  with DMO coefficients.
\end{abstract}

\maketitle

\section{Introduction and main results}
\subsection{Degenerate or singular equations}
For a fixed number $\al\in(-1,\infty)$, we consider a second-order elliptic equation in divergence form with conormal boundary condition
\begin{equation}
    \label{eq:deg/sing-pde}
    \begin{cases}
    &-\div(x_n^\al A\D u)=\div(x_n^\al\bg)\\
    &\lim_{x_n\to0}x_n^\al\mean{A\nabla u+\bg,\vec e_n}=0
\end{cases}
\quad \text{in }B_1^+.    
\end{equation}
Here, the coefficient matrix $A=(a_{ij})_{i,j=1}^n:B_1^+\to\R^{n\times n}$ satisfies the following conditions for some constant $\la>0$
\begin{align}
\label{eq:assump-coeffi}
\begin{cases}
    \text{the uniform ellipticity: }\,\,\,\lambda|\xi|^2\le \mean{A(x)\xi,\xi},\quad\xi\in\R^n,\,\,\,x\in B_1^+, \\
    \text{the uniform boundedness: }\,\,\,|A(x)|\le \frac{1}{\lambda},\quad x\in B_1^+.
\end{cases}
\end{align}
We say that a function $u\in H^{1,\al}(B_1^+)=H^{1}(B_1^+,x_n^\alpha dx)$ is a weak solution of \eqref{eq:deg/sing-pde} if it satisfies
$$\int_{B_1^+}x_n^\al\mean{A\nabla u+\bg,\nabla\phi}=0,$$
for all $\phi\in C^\infty_c(B_1)$. Note that by using a suitable cutoff function and the Cauchy-Schwarz inequality, when $\al\geq1$ it is sufficient to test the equation with $\phi\in C^\infty_c(B_1^+)$.

The main objective of this paper is to establish boundary Schauder type estimates for solutions of \eqref{eq:deg/sing-pde} under Dini mean oscillation (DMO) type conditions on coefficients and data. 
When $\al=0$, it was shown in \cite{DonLeeKim20} that weak solutions of \eqref{eq:deg/sing-pde} are continuously differentiable up to the boundary when coefficients have DMO. Our paper can be primarily viewed as a generalization of \cite{DonLeeKim20} from ``uniformly elliptic'' equations to ``degenerate or singular'' equations. We refer the reader to Theorem \ref{thm:deg-pde-HO} for our main result.

The above class of equations has been studied extensively in the literature. We refer the reader to recent work \cites{SirTerVit21a,SirTerVit21b, TerTorVit22} for H\"older and Schauder estimates and \cites{DP_CVPDE21,DonPha20,DP_TAMS21,DP_JFA} for Sobolev type estimates, as well as the references therein.

In the rest of this subsection, we provide the precise definitions of spaces of DMO functions and $C^{k,\mathrm{DMO}}$ domains.

\begin{definition}[$L^q(d\mu)$-DMO function]\label{def1.1}
Let $\Omega\subseteq\R^n$ be a domain and $f:\Omega\to\R$ a measurable function. Let $\mu$ be a Radon measure, $q\in [1,+\infty)$, $r\in(0,1)$, and
\begin{equation*}
\eta_f^{q,\mu}(r):=\sup_{x_0\in\Omega}\left(\dashint_{\Omega(x_0,r)}|f(x)-\mean{f}^\mu_{\Omega(x_0,r)}|^q\,d\mu(x)\right)^{1/q},
\end{equation*}
where $\Omega(x_0,r):=B_r(x_0)\cap\Omega$ and $\mean{f}^\mu_{\Omega(x_0,r)}=\dashint_{\Omega(x_0,r)}f(x) \,d\mu(x)$. We say that $f$ is of $L^q(d\mu)$ Dini mean oscillations in $\Omega$, briefly $L^q(d\mu)$-DMO in $\Omega$, if $\eta_f^{q,\mu}(r)$ is a Dini function, i.e.,
\begin{equation*}
\int_0^1\frac{\eta_f^{q,\mu}(r)}{r}\, dr<+\infty.
\end{equation*}
\end{definition}
Moreover, the case $q=\infty$ corresponds to uniform Dini continuity; that is, given
\begin{equation*}
\eta_f^{\infty}(r):=\sup_{x_0\in\Omega}\sup_{y,z\in \Omega(x_0,r)}|f(y)-f(z)|,
\end{equation*}
we say that $f$ is Dini continuous in $\Omega$ if $\eta_f^{\infty}(r)$ is a Dini function. Recall the following example given in \cite{DonKim17}. Let $a_{ij}(0)=\delta_{ij}$ and for $0<|x| \le 1/2$,
\[
a_{ij}(x)=\delta_{ij}\left(1+(-\log |x|)^{-\gamma}\right),
\]
where $0<\gamma \le 1$. Then $A$ does not satisfies the $L^\infty$-DMO condition (with respect to the Lebesgue measure) or the H\"older continuity condition.
However, a simple calculation reveals that for any $q\in [1,\infty)$,
\[
\eta^q_A(r)\sim  (-\log r)^{-\gamma-1},
\]
which implies that $A$ satisfies the $L^q$-DMO condition in Definition \ref{def1.1} for any $q\in [1,\infty)$.

In this paper, our primary focus is on $L^1$-DMO type conditions. However, there are situations where we also consider $L^2$-DMO type conditions. Let us remark that any time we work with $L^2$-DMO type conditions, we could work with $L^q$-DMO type conditions for any other $q\in(1,+\infty)$ instead. We decided to choose $q=2$ as the representative of the range $(1,+\infty)$ for the sake of simplicity and clarity.

\begin{definition}[$C^{k,\mathrm{DMO}}$ spaces]\label{CkDMO}
Let $k\in \mathbb N\cup \{0\}$, $\mu$ be a Radon measure, $q\in\{1,2\}$, and $\omega$ a Dini function. We say that $f\in C_{q,\mu}^{k,\omega}(\Omega)$ if $f\in C^k(\overline\Omega)$ and $D^\beta f$ is of $L^q(d\mu)$-DMO in $\Omega$ with DMO modulus $\eta_{D^\beta f}^{q,\mu}\leq C\omega$ for any multiindex $\beta\in(\mathbb N\cup\{0\})^n$ with $|\beta|=\sum_{i=1}^n \beta_i=k$ for some constant $C>0$. Defining the norm
\begin{equation*}
\|f\|_{C_{q,\mu}^{k,\omega}(\Omega)}:=\|f\|_{C^{k}(\Omega)}+\sum_{|\beta|=k}[D^\beta f]_{C_{q,\mu}^{0,\omega}(\Omega)},
\end{equation*}
where
\begin{equation*}
[g]_{C_{q,\mu}^{0,\omega}(\Omega)}:=\sup_{y\in\Omega, r>0}\frac{\left(\dashint_{\Omega\cap B_r(y)}|g(x)-\mean{g}^{\mu}_{\Omega\cap B_r(y)}|^q\,d\mu(x)\right)^{1/q}}{\omega(r)},
\end{equation*}
then $C_{q,\mu}^{k,\omega}(\Omega)$ consists of measurable functions with the finite $C_{q,\mu}^{k,\omega}$-norm.
\end{definition}
Sometimes, we will refer to $C^{k,\mathrm{DMO}}$ functions briefly to indicate functions satisfying the previous definition for certain $q\in\{1,2\}$, $\omega$, and $\mu$. We use $C^{k,q-\mathrm{DMO}}$ as well if we want to indicate the choice of $q\in\{1,2\}$. Given $k\in\mathbb N$, the $k$-th derivative $D^kf$ stands for a generic derivative of order $k$ of the function $f$, i.e., $D^\beta f$ for some $|\beta|=k$. Any time we omit the dependence on the measure, we mean that $\mu$ is the Lebesgue measure of the relevant dimension.

We would like to remark here that $C^k$ uniform Dini spaces embed into $C^k$ spaces with Dini mean oscillation. Moreover, if a radon measure $\mu$ satisfies the doubling property in $\Omega$, then we have

\begin{equation*}
    C^{k,\mathrm{Dini}}(\Omega)\subset C^{k,2-\mathrm{DMO}}(\Omega)\subset C^{k,1-\mathrm{DMO}}(\Omega)\subset C^{k}(\overline\Omega).
\end{equation*}
In other words, fixed a Dini function $\omega(r)$ there exists a modulus of continuity $\sigma(r)$ comparable with $\int_0^r\frac{\omega(s)}{s}\, ds$ such that
\begin{equation}\label{chaininclusion}
    C^{k,\omega}(\Omega)\subset C_{2,\mu}^{k,\omega}(\Omega)\subset C_{1,\mu}^{k,\omega}(\Omega)\subset C^{k,\sigma}(\Omega).
\end{equation}
The definition of $C^k$ uniform spaces is standard. In fact, in order to define $C^{k,\omega}(\Omega)$ with $\omega(0)=0$ the reader may just consider the previous Definition \ref{CkDMO} and replace the $L^q(d\mu)$-DMO seminorm with
\begin{equation*}
[g]_{C^{0,\omega}(\Omega)}:=\sup_{x,y\in\Omega, \  x\neq y}\frac{|g(x)-g(y)|}{\omega(|x-y|)}.
\end{equation*}
The first inclusion in \eqref{chaininclusion} holds if $\mu$ is locally finite, the second one follows from the Cauchy-Schwarz inequality, and the last one needs the Lebesgue differentiation theorem and the doubling property; that is, the existence of a positive constant $C>0$ such that
$$\mu(\Omega(x_0,2r))\leq C\mu(\Omega(x_0,r))\qquad\forall x_0\in\Omega, \, 0<r<\mathrm{diam} \,\Omega.$$
We refer to the example below Definition \ref{def1.1} for an $L^q$-DMO function for any $q\in[1,+\infty)$ which is not Dini continuous. We remark that it is still not known whether the second inclusion in \eqref{chaininclusion} is strict.

\begin{definition}[$C^{k,\mathrm{DMO}}$ domains]
Let $n\geq2$, $k\in\mathbb N$, and $q\in\{1,2\}$. A set $\Omega\subseteq\R^n$ is a $C^k$ domain with $L^q$ Dini mean oscillations (briefly $\Omega$ is $C^{k,q-\mathrm{DMO}}$) if it can be locally described as the epigraph of $C_{q}^{k,\omega}$ functions; that is, given $0\in\partial\Omega$, up to rotations and dilations we can parametrize
\begin{equation*}
\Omega\cap B_1=\{x_n>\gamma(x')\},\qquad \partial\Omega\cap B_1=\{x_n=\gamma(x')\}
\end{equation*}
with $\gamma(0)=0$, $\nabla_{x'}\gamma(0)=0$, $\gamma\in C_{q}^{k,\omega}(B_1')$ for some Dini function $\omega$ (i.e. $\eta_{D^k_{x'} \gamma}^{q}\leq C\omega$).
\end{definition}

This DMO type condition on the domain is strictly weaker than the usual Dini type condition on the domain. For example, in two dimensions, consider a domain whose boundary is locally given by 
$$
\gamma(x)=\frac{x}{(-\log|x|)^{1/2}},\quad |x|\le 1/2.
$$
In this case, the domain is $C^{1,\mathrm{DMO}}$, but not $C^{1,\mathrm{Dini}}$.

Although the definition of $C^k$ boundaries with DMO is new in literature, we will show that it is natural and somehow sharp in order to guarantee ``$C^k$ qualitative properties" of solutions, i.e., Schauder type estimates, the Hopf-Oleinik lemma (or boundary point principle), and higher-order boundary Harnack principles.

\subsection{The higher-order boundary Harnack principle}
In this subsection, we present the applications of Schauder type estimates for \eqref{eq:deg/sing-pde}, Theorem \ref{thm:deg-pde-HO}, to higher-order boundary Harnack principles, both on a fixed boundary and across the ``regular" part of the nodal set.

For a bounded domain $\Omega\subset\R^n$, $n\ge2$, we assume that a variable coefficient matrix $A=(a_{ij})_{i,j=1}^n$ is symmetric $A=A^T$ and satisfies \eqref{eq:assump-coeffi} in $\Omega\cap B_1$. Then, the \emph{higher-order boundary Harnack principle} concerns regularity of the ratio of two solutions which vanish on the same fixed boundary, more precisely, two functions $u,v$ weakly solving
\begin{equation}\label{BHconditions}
\begin{cases}
-\div\left(A\nabla v\right)=f &\mathrm{in \ }\Omega\cap B_1,\\
-\div\left(A\nabla u\right)=g &\mathrm{in \ }\Omega\cap B_1,\\
u>0 &\mathrm{in \ }\Omega\cap B_1,\\
u=v=0, \quad \partial_{\nu} u<0&\mathrm{on \ }\partial\Omega\cap B_1,
\end{cases}
\end{equation}
where $0\in\partial\Omega$ and $\nu$ stands for the outward unit normal vector on $\partial\Omega$.

The Schauder $C^{k,\be}$ regularity of the ratio was first established in \cite{DeSSav15} in the case of $C^{k,\be}$ boundaries, $C^{k-1,\be}$ coefficients and right-hand sides, and both in divergence and non-divergence form. See also \cites{BanGar16,Kuk22} for the parabolic counterpart in non-divergence form. Later in \cite{TerTorVit22} a second proof was proposed, based on the following observation: the ratio $w=v/u$, after composing with a straightening diffeomorphism, solves the degenerate equation
\begin{align}\label{eq:BH-deg}
-\div(x_n^2\tilde A\D w)=uf-gv,\quad\text{where }\tilde A=(u/x_n)^2 A.
\end{align}
Then, the regularity of the ratio follows from that of solutions to the degenerate equation \eqref{eq:BH-deg} with the new coefficient $\tilde A$. Finally, using this approach, the work in \cite{JeoVit23} was a first attempt in providing the $C^k$ regularity of the ratio lowering the requirement on boundaries, coefficients, and free terms up to uniform Dini ones.

By following this approach and applying Theorem~\ref{thm:deg-pde-HO}, we achieve the regularity of the ratio under the DMO type conditions imposed on boundaries, coefficients, and data. See Theorem \ref{thm:BHP}.

\medskip

Next, we introduce the boundary Harnack principle across the regular part of the nodal set. Suppose two functions $u,v\in H^1(B_1)$ solve $\div(A\nabla u)=\div(A\nabla v)=0$ in $B_1$ and share their zero sets, i.e., $Z(u)\subseteq Z(v)$, where $Z(u):=u^{-1}\{0\}$. From now on, we will refer to solutions to $\div(A\nabla u)=0$ as $A$-harmonic functions. The study of local regularity of the ratio $w=v/u$ across $Z(u)$ is called \emph{boundary Harnack principle on nodal domains} \cites{LogMal15,LogMal16,LinLin22,TerTorVit22}.

Let us assume that the variable coefficients $A$ are symmetric, satisfy \eqref{eq:assump-coeffi}, and belong to $C_2^{k-1,\omega}(B_1)$ for some $k\in\mathbb N$, $\omega$ a Dini function. Then, by utilizing Theorem~\ref{thm:deg-pde-HO}, we establish Schauder type estimates for the ratio $w=v/u$ across the regular set $R(u):=\{x\in Z(u) \, : \, |\nabla u(x)|\neq0\}$. See Theorem~\ref{thm:BHPRu} for the precise statement.

\subsection{The Hopf-Oleinik boundary point principle}
The Hopf-Oleinik lemma or boundary point principle (BPP) can be extended to elliptic equations in domains with $C^{1,1-\mathrm{DMO}}$ boundary. The BPP follows from \cite{RenSirSoa22}, where the result was proved in case of flat boundaries and $L^1$-DMO coefficients, which is preserved after a standard straightening diffeomorphism \eqref{standard_diffeo} of the $C^{1,1-\mathrm{DMO}}$ boundary. Consequently, under these DMO conditions on \eqref{BHconditions}, whenever $g\ge0$, the condition $\partial_\nu u<0$  holds on $\partial\Omega$. Let us remark here that this is not in contradiction with \cite{ApuNaz16}, where counterexamples to BPP are constructed on boundaries which fail our $C^{1,1-\mathrm{DMO}}$ definition. See Proposition \ref{convexDMO}.


\subsection{Main results}
We precisely state the main results of this paper.

Our central result is as follows.

\begin{theorem}[Schauder type estimates in the $L^1(x_n^\alpha dx)$-DMO setting]\label{thm:deg-pde-HO}
Let $\alpha>-1$, $k\in\mathbb N$, and $\omega$ be a Dini function. Let $u\in H^{1}(B_1^+,x_n^\alpha dx)$ be a weak solution to \eqref{eq:deg/sing-pde} with $A$ satisfying \eqref{eq:assump-coeffi} in $B_1^+$. Assume that $\bg,A\in C^{k-1}(\overline{B_1^+})$ and $D_{x'}^{k-1}\bg, D_{x'}^{k-1}A\in C_{1,\mu}^{0,\omega}(B_1^+)$ with $d\mu(x)=x_n^\alpha dx$. Then, $u\in C^{k}_\loc(B_1^+\cup B_1')$ and satisfies
\begin{equation}\label{Neumann}
\mean{A\nabla u+\bg,\vec e_n}=0\qquad\mathrm{on \ } B'_1.
\end{equation}
Moreover, if $\|A\|_{C^{k-1}(B_1^+)}+\sum_{|\beta|=k-1}[D_{x'}^{\beta}A]_{C_{1,\mu}^{0,\omega}(B_1^+)}\leq L$, then there exist a modulus $\sigma$ and a constant $C>0$, depending on $n$, $\lambda$, $\omega$, $k$, $\alpha$ and $L$, such that
\begin{equation*}
\|u\|_{C^{k,\sigma}(B_{1/2}^+)}\leq C\left(\|u\|_{L^2(B_1^+,x_n^\alpha dx)}+\|\bg\|_{C^{k-1}(B_1^+)}+\sum_{|\beta|=k-1}[D_{x'}^{\beta}\bg]_{C_{1,\mu}^{0,\omega}(B_1^+)}\right).
\end{equation*}
\end{theorem}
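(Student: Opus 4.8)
The plan is to prove Theorem \ref{thm:deg-pde-HO} by induction on $k$, using a blow-up/compactness scheme adapted to the degenerate weight $x_n^\alpha$. The base case $k=1$ is the heart of the matter: one shows that a weak solution $w$ of \eqref{eq:deg/sing-pde} with $\bg, A$ merely $L^1(x_n^\alpha dx)$-DMO is $C^1$ up to $B_1'$ and satisfies the conormal condition \eqref{Neumann}. First I would normalize, subtracting off an affine function so that $w(0)=0$, and reduce to showing existence of a first-order polynomial $P(x) = a\cdot x$ (with $\partial_n$ component forced by \eqref{Neumann}) approximating $w$ at the origin with a Dini-summable rate. The key technical input is an \emph{excess decay} estimate: defining an excess quantity such as
\begin{equation*}
\phi(r) := \inf_{a\in\R^n}\left(\dashint_{B_r^+}|w - a\cdot x|^2\, x_n^\alpha\,dx\right)^{1/2},
\end{equation*}
I would show that $\phi(\theta r) \le \tfrac{1}{2}\phi(r) + C\theta\,r\,\big(\eta_{\bg}(r) + \eta_{A}(r)\,[\text{Lipschitz norm}]\big)$ for a suitable fixed $\theta\in(0,1)$, where $\eta$ denotes the $L^1(x_n^\alpha dx)$-DMO moduli. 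This is proved by contradiction and compactness: if it failed, a rescaled sequence $w_j$ would converge (in the weighted $L^2$ sense, using a weighted Caccioppoli inequality and a weighted compact embedding — the relevant Sobolev/Poincaré theory for $A_2$-type weights $x_n^\alpha$, $\alpha>-1$, is standard, cf.\ the Muckenhoupt weight literature) to a solution of the \emph{constant-coefficient} homogeneous equation $-\div(x_n^\alpha \bar A \D w_\infty)=0$ with the homogeneous conormal condition, for which first-order Liouville / regularity is classical (the relevant model solutions are affine in $x$, since the weight only sees $x_n$ and the conormal condition kills the $x_n$-linear part appropriately). Iterating the decay and summing the Dini series gives the $C^1$ bound and convergence of the approximating affine functions, hence differentiability at $0$; translating the argument to all boundary points and combining with interior Schauder estimates for \eqref{eq:deg/sing-pde} (away from $\Sigma$ the equation is uniformly elliptic) yields $w\in C^1_{\mathrm{loc}}(B_1^+\cup B_1')$.

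For the inductive step, assuming the result for $k-1$, I would differentiate the equation in the tangential directions $x'$. If $v = \partial_{x_i} w$ for $i<n$, then $v$ solves an equation of the same form, $-\div(x_n^\alpha A\D v) = \div(x_n^\alpha \tilde\bg)$, where $\tilde\bg = \partial_{x_i}\bg + (\partial_{x_i}A)\D w$; since $A, \bg \in C^{k-1}$ with $(k-1)$-th tangential derivatives $L^1(x_n^\alpha dx)$-DMO, and $\D w \in C^{k-1}$ by the inductive hypothesis applied once we also control the normal derivative, the new data $\tilde\bg$ has $(k-2)$-th tangential derivatives $L^1(x_n^\alpha dx)$-DMO (using that products of a $C^{k-2,\mathrm{DMO}}$ function and a $C^{k-1}$ function stay in the class — a routine Leibniz-type computation in the DMO scale, exactly as in the standard Schauder theory). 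Applying the $(k-1)$-case to $v$ gives $\partial_{x_i}w \in C^{k-1}$ for all tangential $i$. The normal derivative $\partial_n w$ is then recovered algebraically: the equation \eqref{eq:deg/sing-pde} can be rewritten, expanding the divergence, as $a_{nn}\partial_{nn}w = -\text{(tangential and lower-order terms)} - \tfrac{\alpha}{x_n}(a_{nn}\partial_n w + g_n + \ldots)$; the delicate point is the $1/x_n$ singularity, which is handled exactly by the conormal boundary condition — one shows $x_n^\alpha(a_{nn}\partial_n w + \ldots)$ vanishes to the right order at $\Sigma$ so that the quotient is controlled, and then $\partial_n w$ inherits regularity from the other terms. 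This bootstrapping of mixed and pure normal derivatives from the tangential ones, keeping track of the weight, is the standard mechanism but needs care here.

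The main obstacle, and where I would concentrate the effort, is the base case $k=1$ compactness argument in the weighted setting — specifically, verifying that the limiting rescaled functions genuinely solve the homogeneous constant-coefficient degenerate problem \emph{with the correct conormal condition}, and that the first-order Liouville statement holds in the form needed (affine functions are the only sublinear-growth solutions, and the conormal condition at $\Sigma$ forces the coefficient structure one expects). One subtlety is the range $-1<\alpha<1$ versus $\alpha\ge 1$: when $\alpha\ge 1$ test functions need not vanish on $B_1'$ (as noted in the excerpt), which affects how the conormal condition is read off in the limit; for $-1<\alpha<1$ the trace theory for $H^1(x_n^\alpha dx)$ is more delicate and one must be careful about what "$\mean{A\D w + \bg,\vec e_n}=0$ on $B_1'$" means and how it passes to limits. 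A second genuine difficulty is that the DMO modulus only controls tangential derivatives of $A$ and $\bg$ (not normal ones), so throughout one must ensure the scheme never requires differentiating the coefficients in $x_n$; this is compatible with the structure because the excess decay only needs to compare $A$ to its average, and the inductive step only differentiates tangentially — but it must be checked at each stage, particularly in the Leibniz estimates for $\tilde\bg$ and in the algebraic recovery of $\partial_n w$. Once $k=1$ is secured, the induction and the final quantitative estimate (obtained by tracking constants through the iteration and a standard covering/scaling argument to pass from the origin to $B_{1/2}^+$) are routine.
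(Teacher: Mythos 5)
Your inductive step (tangential differentiation, then algebraic recovery of $\partial_{nn}w$ from the equation using the conormal condition to tame the $\alpha/x_n$ term) matches the paper's Section 3.3 almost exactly, and that part is sound. The base case, however, is where your route diverges from the paper and where there is a genuine gap. The paper does \emph{not} use compactness: it runs a direct perturbation argument in the style of Dong--Escauriaza--Kim, decomposing $\hat u=\hat v+\hat w$ with $\hat w$ solving the equation driven by $\div\left(x_n^\al((A-\bar A)\D u+\bg-\bar\bg)\right)$, controlling $\D\hat w$ via a weighted weak type-$(1,1)$ estimate (Lemma \ref{lem:weak-type-(1,1)}, itself built on a duality argument and the constant-coefficient estimates of \cite{DonPha20}), and measuring decay of $\inf_{\bq}\big(\dashint|\D u-\bq|^p\,d\mu\big)^{1/p}$ with $0<p<1$ rather than an $L^2$ excess of $w-a\cdot x$.

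The gap in your scheme is the a priori Lipschitz bound. Your claimed decay inequality carries the factor $\eta_A(r)\cdot[\text{Lipschitz norm}]$ on the right-hand side, i.e.\ it presupposes $\|\D w\|_{L^\infty}$ is finite and controlled; but under merely $L^1(d\mu)$-DMO coefficients the error term $(A-\bar A)\D w$ cannot be estimated in any useful norm from the $L^2$ energy bound alone, since $A-\bar A$ is small only in $L^1$ average and not in $L^\infty$, so Cauchy--Schwarz against $\D w\in L^2$ is unavailable. This is exactly the circularity that forces the paper to work with $L^p$ quasi-norms for $p<1$ (where the weak-$(1,1)$ estimate gives a bound by $\int|(A-\bar A)\D u+\bg-\bar\bg|\,d\mu$) and then to close the loop with a separate absorption argument (Lemma \ref{lem:gradient-unif-est}: choosing $r_0$ with $C\int_0^{r_0}\hat\eta_A^{1,\mu}(t)t^{-1}dt<3^{-(n+\al^+)}$ and summing over the exhaustion $s_k=3-2^{1-k}$). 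Your proposal never explains how the gradient $L^\infty$ bound is obtained independently of the decay estimate, and a compactness/contradiction argument does not supply it: the rescaled sequence $w_j$ need not converge to a solution of the constant-coefficient homogeneous problem unless $\div(x_n^\al(A_j-\bar A_j)\D w_j)\to0$ in the relevant dual norm, which again requires the missing gradient bound. A secondary, fixable imprecision: affine functions $a\cdot x$ solve $\div(x_n^\al\bar A\D v)=0$ only when $\mean{\bar A a,\vec e_n}=0$ (for $\al\neq0$), so your Liouville-type classification must be stated for the affine functions compatible with the conormal condition, as in the paper's correction $\tilde v=\hat v-\mean{\bq',x'}-\g x_n$ with $\g=-(\bar a_{nn})^{-1}\sum_i\bar a_{ni}q_i$.
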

A similar result was attained in \cite{TerTorVit22} when coefficients belong to the H\"older space $C^{k-1,\al}$. However, as the argument in \cite{TerTorVit22} is specialized for the homogeneous power-type moduli, it is not applicable to our setting. Instead, we adopt Campanato's approach and utilize the weak type-(1,1) estimate presented in \cite{DonLeeKim20}, following the original idea in \cites{DonKim17, DonEscKim18}. \cite{DonLeeKim20} deals with the specific case of Theorem~\ref{thm:deg-pde-HO} with $\al=0$ and $k=1$, and its crucial step involves the growth estimate of 
$$
\vp(x_0,r):=\inf_{\bq\in\R^n}\left(\dashint_{B_r(x_0)\cap B_1^+}|\D u-\bq|^p\right)^{1/p},\quad  0<r<1/2,\,x_0\in B_{1/2}^+,\, 0<p<1.
$$
However, it turns out that in our weighted context, we have to work with
\begin{align*}
    \psi(x_0,r):=\begin{cases}
        \inf_{\bq\in\R^n}\left(\dashint_{B_r(x_0)}|\mean{\D_{x'}u,U^{x_0}}-\bq|^pd\mu\right)^{1/p},& 0<r\le (x_0)_n/2,\\
        \inf_{\bq'\in\R^{n-1}}\left(\dashint_{B_r(x_0)\cap B_4^+}|\mean{\D_{x'}u-\bq',U}|^pd\mu\right)^{1/p},& (x_0)_n/2< r<1/2,
    \end{cases}
\end{align*}
where $U:=\mean{A\D u+\bg,\vec{e}_n}$ and $U^{x_0}:=(x_n/(x_0)_n)^\al\mean{A\D u+\bg,\vec{e}_n}$. This is where our proof substantially differs from \cite{DonLeeKim20}, requiring careful handling of the discontinuity of $\psi$ at $r=(x_0)_n/2$.

\medskip
Next, we present higher-order boundary Harnack principle on a fixed boundary.

\begin{theorem}[Higher-order boundary Harnack principle]\label{thm:BHP}
Let $k\in\mathbb N$, and $\omega$ a Dini function. Consider two functions $u,v\in H^1(\Omega\cap B_1)$ solving \eqref{BHconditions} with $A$ symmetric and satisfying \eqref{eq:assump-coeffi}. Assume that $A,f,g\in C_1^{k-1,\omega}(\Omega\cap B_1)$ and $\gamma\in C_1^{k,\omega}(B'_1)$. Then, $w=v/u$ belongs to $C^{k}_\loc(\overline{\Omega}\cap B_1)$ and satisfies the following boundary condition
\begin{equation*}
2\mean{\nabla u,\nu}\mean{A\nabla w,\nu}+f-gw=0\qquad\mathrm{on \ } \partial\Omega\cap B_1.
\end{equation*}
Moreover, if $\|A\|_{C_1^{k-1,\omega}(\Omega\cap B_{1})}+\|\gamma\|_{C_1^{k,\omega}(B_1')}+\|g\|_{C_1^{k-1,\omega}(\Omega\cap B_{1})}\le L_1, \|u\|_{L^2(\Omega\cap B_{1})}\le L_2$, and $\inf_{\partial\Omega\cap B_{3/4}}|\partial_{\nu}u|\ge L_3>0$, then the following estimate holds true
\begin{equation*}\label{eq.daperfez}
\left\|\frac{v}{u}\right\|_{C^{k,\sigma}(\Omega\cap B_{1/2})}\le C\left(\left\|v\right\|_{L^2(\Omega\cap B_{1})}+\|f\|_{C_1^{k-1,\omega}(\Omega\cap B_{1})}\right)
\end{equation*}
with a modulus of continuity $\sigma$ and a positive constant $C$ depending on $n$, $\lambda$, $\omega$, $k$, $L_1$, $L_2$, and $L_3$. Finally, if $u(\vec e_n/2)=1$ and $v>0$ in $\Omega\cap B_{1}$, then
\begin{equation*}
\left\|\frac{v}{u}\right\|_{C^{k,\sigma}(\Omega\cap B_{1/2})}\le C\left(\left|\frac{v}{u}(\vec e_n/2)\right|+\|f\|_{C_1^{k-1,\omega}(\Omega\cap B_{1})}\right)
\end{equation*}
with $\sigma$ and a positive constant $C$ depending only on $n$, $\lambda$, $\omega$, $k$, $L_1$, and $L_3$.
\end{theorem}

Similar to the approaches in H\"older and uniform Dini settings \cites{TerTorVit22, JeoVit23}, we establish Theorem~\ref{thm:BHP} by reducing it to the Schauder type estimate for the degenerate equation \eqref{eq:BH-deg}. As $u/x_n$ is absorbed into the new coefficient $\tilde A$, the regularity property of $u/x_n$ is crucial for applying the Schauder type estimate, Theorem~\ref{thm:deg-pde-HO}. While this is rather immediate in \cites{TerTorVit22, JeoVit23}, in our DMO context, it is highly nontrivial to verify that $u/x_n$ is in the DMO-space. We prove this by employing Campanato's approach in a clever manner; see Proposition~\ref{prop:ratio-L1-DMO}.



\medskip
Lastly, we state the boundary Harnack principle across the regular zero set, subject to the $L^2$-DMO type condition on the coefficient. We recall the regular set $R(u)=\{x\in Z(u)\,:\, |\D u(x)|\neq0\}$ and denote the singular set by $S(u)=\{x\in Z(u)\,:\, |\D u(x)|=0\}$.

\begin{theorem}[Schauder type estimates for the ratio across regular zero sets]\label{thm:BHPRu}
Let $k\in\mathbb N$ and $\omega$ a Dini function. Consider two $A$-harmonic functions $u,v\in H^1(B_1)$ with $A$ symmetric and satisfying \eqref{eq:assump-coeffi} in $B_1$, $A\in C_2^{k-1,\omega}(B_1)$. Assume that $S(u)\cap B_1=\emptyset$ and $Z(u)\subseteq Z(v)$. Then, $w=v/u$ belongs to $C^{k}_\loc(B_1)$ and satisfies the following boundary condition
\begin{equation*}
\mean{A\nabla w,\nu}=0\qquad\mathrm{on \ } R(u)\cap B_1,
\end{equation*}
where $\nu$ is the unit normal vector on $R(u)$. Moreover, if $\|A\|_{C_2^{k-1,\omega}(B_1)}\leq L_1$ then
\begin{equation*}
\left\| \frac{v}{u} \right\|_{C^{k,\sigma}(B_{1/2})}\leq C\left\|v\right\|_{L^2(B_1)},
\end{equation*}
with modulus $\sigma$ and a positive constant $C$ depending on $n,\lambda,\omega,L_1,u$, and its nodal set $Z(u)$.
\end{theorem}

For the proof, we first show that the solution $u$ belongs to the DMO-space $C^{k,\hat\omega}_2$ under the $L^2$-DMO assumption on the coefficient. Then, we flatten the zero-set of $u$ and reduce the problem to the degenerate equation with a DMO-type coefficient. Finally, we apply Theorem~\ref{thm:deg-pde-HO} on both sides of $R(u)$ together with a gluing lemma. 

It is noteworthy that the standard straightening diffeomorphism \eqref{standard_diffeo} does not ensure that the new coefficient retains the DMO property, as it is not preserved when restricting to lower-dimensional subsets, unlike the H\"older condition. To address this issue, we employ a different type of diffeomorphism to flatten the level set; see \eqref{udiffeo}.


\subsection{Notation and structure of the paper}
We use the following notation in this paper.
\begin{itemize}
\item $\R^n$ stands for the $n$-dimensional Euclidean space. We indicate the points in $\R^n$ by $x=(x',x_n)$, where $x'=(x_1,\cdots,x_{n-1})\in \R^{n-1}$, and identify $\R^{n-1}$ with $\R^{n-1}\times\{0\}$.

For $x\in\R^n$ and $r>0$, we let
\begin{alignat*}{2}
  B_r(x)&=\{y\in \R^n:|x-y|<r\},&\quad&\text{ball in $\R^n$,}\\
  B^{\pm}_r(x')&=B_r(x',0)\cap \{\pm y_n>0\},&& \text{half-ball},\\
  B'_r(x')&=B_r(x',0)\cap \{y_n=0\}, &&\text{thin ball.}
\end{alignat*}
When the center is the origin, we simply write $B_r=B_r(0)$, $B_r^\pm=B_r^\pm(0)$, and  $B_r'=B'_r(0)$.

\item The notation $\mean{\cdot,\cdot}$ stands for the scalar product of two vectors; that is, for $x=(x_1,\ldots,x_n)$ and $y=(y_1,\ldots,y_n)$, then $\mean{x,y}=\sum_{i=1}^nx_iy_i$. The orthonormal basis of $\R^n$ is denoted by $\vec e_i$, $i=1,\ldots,n$. 
\item We denote the set of positive integers by $\mathbb{N}=\{1,2,3,\cdots\}$.
\item Let $\beta=(\beta_1,\ldots,\beta_n)\in(\mathbb N\cup\{0\})^n$ be a multiindex. Given $|\beta|=\sum_{i=1}^n\beta_i$ and $\partial x^\beta=\prod_{i=1}^n\partial x_i^{\beta_i}$,  the $\beta$ partial derivative of order $|\beta|$ is given by
$$D^\beta u=\frac{\partial^{|\beta|}u}{\partial x^\beta}.$$
When we write $D^ku$ for some $k\in\mathbb N$, we mean a generic partial derivative $D^\beta u$ with $|\beta|=k$. By $D^k_{(x_{i_1},x_{i_2},\ldots,x_{i_k})}u$, we mean we consider only derivatives of $u$ of order $k$ with respect to some chosen directions $\vec e_{i_j}$ with $i_j\in\{i_1,\ldots,i_k\}\subset\{1,\ldots,n\}$.
\end{itemize}

The rest of the paper is organized as follows: Section \ref{3} is devoted to the proof of Theorem \ref{thm:deg-pde-HO}, i.e., Schauder type estimates for degenerate or singular equations as in \eqref{eq:deg/sing-pde} for general powers $\alpha>-1$. In Section~\ref{sec:Hopf}, we discuss the validity of the boundary point principle on $C^{1,1-\text{DMO}}$ boundaries. In Section \ref{sec:BHP}, we prove Theorems \ref{thm:BHP} and \ref{thm:BHPRu}, i.e., the boundary Harnack principle on a fixed $C^{k,1-\mathrm{DMO}}$ boundary and the boundary Harnack principle across regular zero sets of solutions to elliptic equations with $C^{k-1,2-\mathrm{DMO}}$ variable coefficients. Finally, we prove some properties of DMO-functions in Appendix \ref{A}.

\section{Schauder type estimates for degenerate or singular equations}\label{3}
This section is devoted to the proof of our main result, Theorem \ref{thm:deg-pde-HO}. Throughout this section, we fix
$$
d\mu=x_n^\al dx, \quad\al>-1.
$$

\subsection{Weak type-\texorpdfstring{$(1,1)$}{} estimates}
In this subsection, we prove the following version of weak type-$(1,1)$ estimates for the solution of degenerate or singular equations, which will play a significant role in the proof of the $C^1$-estimate in Theorem~\ref{thm:deg-pde}.

\begin{lemma}
    \label{lem:weak-type-(1,1)}
Let $\bar A$ be a constant matrix satisfying
$$
\la|\xi|^2\le \mean{\bar A\xi,\xi},\quad \xi\in\R^n,\qquad\mathrm{and}\qquad|\bar A|\le 1/\la
$$
for some constant $\la>0$. Let $\cD$ and $\tilde\cD$ be smooth and convex domains in $\R^n_+$ with $B_{1}^+\subset\cD\subset B^+_{4/3}$ and $B^+_{3/2}\subset\tilde\cD\subset B_2^+$. For $\bff\in H^{1,\al}(\tilde\cD)$, let $u\in H^{1,\al}(\tilde\cD)$ be a weak solution of
$$
-\div(x_n^\al\bar A\D u)=\div(x_n^\al\bff\chi_\cD)\quad\text{in }\tilde\cD,
$$
with the conormal boundary condition
$$
x_n^\al\mean{\bar A\D u+\bff\chi_{\cD},\nu}=0\quad\text{on }\partial \tilde\cD.
$$
Then there exists a constant $C=C(n,\la,\al)>0$ such that for any $t>0$
$$
\mu(\{x\in \cD\,:\,|\D u(x)|>t\})\le \frac{C}{t}\int_\cD|\bff|d\mu.
$$
\end{lemma}

The proof of Lemma~\ref{lem:weak-type-(1,1)} relies on the following auxiliary results.

\begin{lemma}
    \label{lem:weak-type-(1,1)-aux}
Let $\bar A$, $\cD$ and $\tilde\cD$ be as in Lemma~\ref{lem:weak-type-(1,1)}. Given $x_0\in \cD$ and $0<r<\frac12 \diam \cD$, suppose that $\bb\in L^2(\cD;\R^n,d\mu)$ is supported in $B_r(x_0)\cap \cD$ and satisfies $\int_{B_r(x_0)\cap \cD}\bb\, d\mu=\mathbf{0}$. If $\tilde u$ is a solution of
\begin{align}\label{eq:weak-sol-u}
-\div(x_n^\al\bar A\D\tilde u)=\div(x_n^\al\bb)\quad\text{in }\tilde\cD,\qquad x_n^\al\mean{\bar A\D\tilde u+\bb,\nu}=0\quad\text{on }\partial \tilde\cD,
\end{align}
then there exists a constant $C>0$, depending only on $n,\al,\la$, such that
$$
\int_{\cD\setminus B_{2r}(x_0)}|\D\tilde u|d\mu\le C\int_{B_r(x_0)\cap \cD}|\bb|d\mu.
$$
\end{lemma}

\begin{proof}
Since the proof of this lemma follows a portion of the proof in \cite{DonLeeKim20}*{Lemma~2.12} with straightforward modifications, we shall provide the outline of the proofs instead of going into the details.

For any $R\ge 2r$ such that $\cD\setminus B_R(x_0)\neq\emptyset$ and a function $\bg\in C_c^\infty((B_{2R}(x_0)\setminus B_R(x_0))\cap \cD;\R^n)$, let $\tilde v\in H^{1,\al}(\tilde\cD)$ be a solution to
\begin{align}\label{eq:weak-sol-v}
-\div(x_n^\al\bar A^T\D\tilde v)=\div(x_n^\al\bg)\quad\text{in }\tilde\cD,\qquad x_n^\al\mean{\bar A^T\D\tilde v+\bg,\nu}=0\quad\text{on }\partial \tilde\cD
\end{align}
By testing \eqref{eq:weak-sol-v} with $\tilde u$ and \eqref{eq:weak-sol-u} with $\tilde v$ and using the assumptions on $\bb$, we obtain
\begin{align*}
    \int_\cD\mean{\D\tilde u,\bg}d\mu=\int_\cD\mean{\D\tilde v,\bb}d\mu=\int_{B_r(x_0)\cap \cD}\mean{\D\tilde v-\mean{\D\tilde v}^\mu_{B_r(x_0)\cap \cD},\bb}d\mu.
\end{align*}
This gives
\begin{align*}
    &\left|\int_{(B_{2R}(x_0)\setminus B_R(x_0))\cap \cD}\mean{\D\tilde u,\bg}d\mu\right|\le 2r\|D^2\tilde v\|_{L^\infty(B_r(x_0)\cap \cD)}\int_{B_r(x_0)\cap \cD}|\bb|d\mu.
\end{align*}
Since $\tilde v$ solves the homogeneous equation $\div(x_n^\al\bar A^T\D\tilde v)=0$ in $B_R(x_0)\cap \cD$ with the homogeneous boundary condition and $r\le R/2$, we have by \cite{DonPha20}*{Proposition~4.4}
\begin{align*}
    \|D^2\tilde v\|_{L^\infty(B_{r}(x_0)\cap \cD)}^2&\le \frac{C}{R^2}\dashint_{B_R(x_0)\cap \tilde\cD}|\D\tilde v|^2d\mu\le \frac{C}{R^2\mu(B_R(x_0)\cap\tilde\cD)}\int_{\tilde\cD}|\bg|^2d\mu.
\end{align*}
Combining the previous two estimates yields
\begin{align*}
    &\left|\int_{(B_{2R}(x_0)\setminus B_R(x_0))\cap \cD}\mean{\D\tilde u,\bg}d\mu\right|\\
    &\le \frac{Cr}{R[\mu(B_R(x_0)\cap\tilde\cD)]^{1/2}}\int_{B_r(x_0)\cap \cD}|\bb|d\mu\cdot\|\bg\|_{L^2((B_{2R}(x_0)\setminus B_R(x_0))\cap \cD,d\mu)}.
\end{align*}
By using the duality and applying H\"older's inequality, we deduce
$$
\|\D\tilde u\|_{L^1((B_{2R}(x_0)\setminus B_R(x_0))\cap \cD,d\mu)}\le \frac{Cr}R\int_{B_r(x_0)\cap \cD}|\bb|d\mu.
$$
Taking the smallest positive integer $K$ such that $\cD\subset B_{2^{K+1}r}(x_0)$ and applying the above inequality with $R=2r,2^2r,\ldots,2^Kr$, we conclude
\begin{equation*}
    \int_{\cD\setminus B_{2r}(x_0)}|\D \tilde u|d\mu\le Cr\int_{B_r(x_0)\cap \cD}|\bb|d\mu\sum_{k=1}^K\frac1{2^kr}\le C\int_{B_r(x_0)\cap \cD}|\bb|d\mu.\qedhere
\end{equation*}
\end{proof}

\begin{lemma}
    \label{lem:weak-type-(1,1)-aux-2}
Let $\cD\subset B_1^+$ be a smooth convex domain and $T$ be a bounded linear operator from $L^2(\cD;\R^n,d\mu)$ to $L^2(\cD;\R^n,d\mu)$. Suppose that there exists a constant $C_0>0$ such that for any $x_0\in \cD$ and $0<r<\frac12\diam \cD$,
$$
\int_{\cD\setminus B_{2r}(x_0)}|T\bb|\le C_0\int_{B_r(x_0)\cap \cD}|\bb|
$$
whenever $\bb\in L^2(\cD;\R^n,d\mu)$ is supported in $B_r(x_0)\cap \cD$ and satisfies $\int_\cD\bb\, d\mu=\mathbf{0}$. Then for $\bff\in L^2(\cD;\R^n,d\mu)$ and any $t>0$, it holds that
$$
\mu\left(\{x\in \cD\,:\, |T\bff(x)|>t\}\right)\le \frac{C}{t}\int_\cD|\bff|d\mu,
$$
where $C>0$ is a constant, depending only on $n,\al,\la,\cD,C_0$.
\end{lemma}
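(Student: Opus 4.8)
The plan is to run the classical Calderón--Zygmund argument, with the cancellation estimate normally supplied by a Hörmander-type condition replaced verbatim by the hypothesis on $T$. The only structural ingredient needed is that $(\cD,|\cdot|,\mu)$ is a space of homogeneous type, i.e.\ that $\mu=x_n^\al dx$ is doubling on $\cD$ uniformly down to the scale $\diam\cD$. I would establish this first. The measure $x_n^\al dx$ is doubling on the closed half-space $\overline{\R^n_+}$ for every $\al>-1$ (an elementary computation distinguishing whether the center of the ball lies at height $\gtrsim r$ or $\lesssim r$, using $\al+1>0$ for integrability in the latter case). Moreover, $\cD$ being bounded and convex, it enjoys an interior corkscrew condition: there is $c_0=c_0(\cD,n)>0$ such that for $x_0\in\overline\cD$ and $0<r\le\diam\cD$ one can find $z$ with $B_{c_0r}(z)\subset \cD\cap B_r(x_0)$. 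Combining the two gives $\mu(\cD\cap B_r(x_0))\ge\mu(B_{c_0r}(z))\ge c\,\mu(B_r(x_0)\cap\overline{\R^n_+})\ge c\,\mu(\cD\cap B_{2r}(x_0))$, so $\mu$ restricted to $\cD$ is doubling; in particular the Lebesgue differentiation theorem and a Christ-type dyadic grid are available on $(\cD,\mu)$.

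Fix $\bff\in L^2(\cD;\R^n,d\mu)$ and $t>0$, and perform the Calderón--Zygmund decomposition of $\bff$ at height $t$ relative to $\mu$: a stopping-time argument on the dyadic grid produces pairwise disjoint ``cubes'' $Q_i\subset\cD$, each comparable to a ball $B_{r_i}(x_i)$ with $\mu(\cD\cap B_{2r_i}(x_i))\le C\mu(Q_i)$, such that, setting $\bg=\bff$ on $\cD\setminus\bigcup_iQ_i$ and $\bg=\mean{\bff}^\mu_{Q_i}$ on each $Q_i$, and $\bb_i=(\bff-\mean{\bff}^\mu_{Q_i})\chi_{Q_i}$, $\bb=\sum_i\bb_i$, one has $\bff=\bg+\bb$, $|\bg|\le Ct$ $\mu$-a.e., $\int_{\cD}\bb_i\,d\mu=\mathbf 0$, $\int_{Q_i}|\bb_i|\,d\mu\le Ct\,\mu(Q_i)$, and $\sum_i\mu(Q_i)\le t^{-1}\int_\cD|\bff|\,d\mu$. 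Since $\{|T\bff|>t\}\subset\{|T\bg|>t/2\}\cup\{|T\bb|>t/2\}$, it suffices to bound each of the two sets.

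For the good part I would use only the $L^2(d\mu)$-boundedness of $T$ together with $\|\bg\|_{L^2(d\mu)}^2\le\|\bg\|_{L^\infty}\|\bg\|_{L^1(d\mu)}\le Ct\int_\cD|\bff|\,d\mu$, so that Chebyshev's inequality gives
\[
\mu\bigl(\{|T\bg|>t/2\}\bigr)\le\frac{4}{t^2}\,\|T\bg\|_{L^2(d\mu)}^2\le\frac{C}{t^2}\,\|\bg\|_{L^2(d\mu)}^2\le\frac{C}{t}\int_\cD|\bff|\,d\mu.
\]
For the bad part, put $\Omega^*=\bigcup_iB_{2r_i}(x_i)$; the doubling property yields $\mu(\Omega^*)\le\sum_i\mu(\cD\cap B_{2r_i}(x_i))\le C\sum_i\mu(Q_i)\le\frac{C}{t}\int_\cD|\bff|\,d\mu$. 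Each $\bb_i$ is supported in $B_{r_i}(x_i)\cap\cD$ and has $\mu$-mean zero, so the hypothesis on $T$ applies with $x_0=x_i$, $r=r_i$; summing over $i$ and using $\cD\setminus\Omega^*\subset\cD\setminus B_{2r_i}(x_i)$,
\begin{align*}
\int_{\cD\setminus\Omega^*}|T\bb|\,d\mu&\le\sum_i\int_{\cD\setminus B_{2r_i}(x_i)}|T\bb_i|\,d\mu\\
&\le C_0\sum_i\int_{B_{r_i}(x_i)\cap\cD}|\bb_i|\,d\mu\le C_0Ct\sum_i\mu(Q_i)\le C\int_\cD|\bff|\,d\mu,
\end{align*}
whence $\mu(\{x\in\cD\setminus\Omega^*:|T\bb|>t/2\})\le\frac{2}{t}\int_{\cD\setminus\Omega^*}|T\bb|\,d\mu\le\frac{C}{t}\int_\cD|\bff|\,d\mu$. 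Adding this to $\mu(\Omega^*)$ and the good-part bound gives the claim. The exchange of $T$ with the infinite sum is justified by $\bb=\sum_i\bb_i$ converging in $L^2(d\mu)$ (the $Q_i$ are disjoint and $\bg\in L^\infty\subset L^2$, so $\bb=\bff-\bg\in L^2$), the $L^2$-continuity of $T$, and passing to finite partial sums together with Fatou's lemma in the pointwise bounds above.

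The only genuinely non-routine point is the first one: ensuring the Calderón--Zygmund machinery is legitimate on $\cD$, i.e.\ the doubling of $\mu$ and the Lebesgue differentiation theorem there; once that is in place the rest is the textbook weak-$(1,1)$ argument. If one prefers to avoid Christ's dyadic cubes, the same decomposition can be obtained from a Vitali (or Besicovitch) covering of the level set $\{M_\mu\bff>t\}$ of the Hardy--Littlewood maximal function on $(\cD,\mu)$, trading the disjointness of the $Q_i$ for a bounded-overlap property of the balls $\{B_{r_i}(x_i)\}$, which leaves the estimates above unchanged.
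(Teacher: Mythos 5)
Your proof is correct and takes essentially the same route as the paper, whose own proof merely observes that $(\cD,|\cdot|,\mu)$ is a space of homogeneous type (Lemma \ref{lem:doubl}) and then invokes the Calder\'on--Zygmund weak type-$(1,1)$ argument of \cite[Lemma~4.1]{DonEscKim18} --- precisely the decomposition you write out in detail. The only point worth recording explicitly is the restriction $0<r<\tfrac12\diam\cD$ in the hypothesis on $T$: when $t\le \dashint_\cD|\bff|\,d\mu$ the claim is trivial because $\mu(\{|T\bff|>t\})\le\mu(\cD)\le t^{-1}\int_\cD|\bff|\,d\mu$, and otherwise the stopping cubes are proper subsets of $\cD$, so the associated radii $r_i$ fall in the admissible range.
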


\begin{proof}
In view of Lemma~\ref{lem:doubl}, $\cD$ equipped with the standard Euclidean metric and the weighted measure $\mu$ (restricted to $\cD$) is a space of homogeneous type. Moreover, the Lebesgue differentiation theorem is available in our situation. With these properties at hand, we can follow the proof of \cite{DonEscKim18}*{Lemma~4.1}, making obvious adjustments as needed to conclude the lemma.
\end{proof}

Now, Lemma~\ref{lem:weak-type-(1,1)} can be directly derived from the preceding two lemmas.

\begin{proof}[Proof of Lemma~\ref{lem:weak-type-(1,1)}]
Given any $\bff\in H^{1,\al}(\tilde\cD)$, we solve for $u$. As $u$ is unique up to a constant, the map $T:\bff\longmapsto \D u$ is well defined, and it is a bounded linear operator on $L^2(\cD,d\mu)$. Thus, Lemma~\ref{lem:weak-type-(1,1)} follows by combining Lemmas~\ref{lem:weak-type-(1,1)-aux} and ~\ref{lem:weak-type-(1,1)-aux-2}.
\end{proof}


\subsection{\texorpdfstring{$C^1$}{} estimates}
In this subsection, we establish Theorem~\ref{thm:deg-pde-HO} for the case $k=1$. We will subsequently generalize this result to the cases $k\ge1$ by employing an induction argument in the next subsection.

\begin{theorem}
    \label{thm:deg-pde}
For $\al>-1$, let $u\in H^{1,\al}(B_4^+)$ be a weak solution of
\begin{align}\label{eq:deg-pde}
-\div(x_n^\al A\D u)=\div(x_n^\al\bg)\quad\text{in }B_4^+,
\end{align}
satisfying the conormal boundary condition on $B_4'$
\begin{align*}
\lim_{x_n\to0^+}x_n^\al\mean{A\D u+\bg,\vec{e}_n}=0.
\end{align*}
Suppose $A=[a_{ij}]_{n\times n}$ satisfies \eqref{eq:assump-coeffi} in $B_4^+$. If $A$ and $\bg$ are of $L^1(d\mu)$-DMO in $B_4^+$, then $u\in C^1(\overline{B_1^+})$.
\end{theorem}

Note that the case $\al=0$ was achieved in \cite{DonLeeKim20}*{Proposition~3.2}. A similar result can be found in \cite{DonEscKim18}*{Proposition~2.7}, where the zero Dirichlet boundary condition on $B_4'$ was imposed instead of the conormal boundary condition. In our proof of Theorem~\ref{thm:deg-pde}, we follow the lines in these two papers. However, our case involving $\al>-1$ requires significantly more intricate technical considerations due to the degenerate or singular nature of the problem.

As in \cites{DonEscKim18, DonLeeKim20}, we will establish an a priori estimate of the modulus of continuity of $\D u$ under the assumption $u\in C^1(\overline{B_3^+})$. The general case can be obtained by a standard approximation argument (see e.g. pages 134-135 in \cite{Don12}).

Given $x_0=(x_0',(x_0)_n)\in \overline{B_3^+}$, we put $d_{x_0}:=\dist(x_0,B'_3)=(x_0)_n\ge0$. We set
\begin{align*}
    U:=\mean{A\D u+\bg,\vec{e}_n},\quad\text{and}\quad U^{x_0}:=(x_n/d_{x_0})^\al\mean{A\D u+\bg,\vec{e}_n}\,\text{ when }d_{x_0}>0.
\end{align*}
We fix $0<p<1$ and consider
\begin{align*}
    \psi(x_0,r):=\begin{cases}
        \inf_{\bq\in\R^n}\left(\dashint_{B_r(x_0)}|\mean{\D_{x'}u,U^{x_0}}-\bq|^pd\mu\right)^{1/p},& 0<r\le d_{x_0}/2,\\
        \inf_{\bq'\in\R^{n-1}}\left(\dashint_{B_r(x_0)\cap B_4^+}|\mean{\D_{x'}u-\bq',U}|^pd\mu\right)^{1/p},& d_{x_0}/2< r<1/2.
    \end{cases}
\end{align*}
We would like to mention that for the latter case $d_{x_0}/2<r<1/2$, the infimum can be taken over $\bq\in\R^n$ instead of $\bq'\in\R^{n-1}$. However, we opt for this formulation of $\psi$ as it readily yields the boundary condition $U=0$ (equation \eqref{Neumann}), which is a significant ingredient in the boundary Harnack Principe across the regular set; see Subsection~\ref{5}.

\medskip

Next, we introduce several Dini functions derived from $\eta^{1,\mu}_\bullet$ to be used in this section, where $\bullet$ represents either $A$ or $\bg$. We remark that $r\longmapsto \eta^{1,\mu}_\bullet(r)$ is not necessarily nondecreasing. Instead, there are constants $C>c>0$, depending only on $n$ and $\al$, such that
\begin{align}
    \label{eq:alm-mon}
    c\eta^{1,\mu}_\bullet(r)\le \eta^{1,\mu}_\bullet(s)\le C\eta^{1,\mu}_\bullet(r)
\end{align}
whenever $r/2\le s\le r<1$. See \cite{Li17}.

Given a constant $0<\ka<1$, we define
\begin{align}\label{eq:tilde-eta}
\tilde\eta_\bullet^{1,\mu}(r):=\sum_{i=0}^\infty\ka^{i/2}\left(\eta^{1,\mu}_\bullet(\ka^{-i}r)[\ka^{-i}r\le 1]+\eta^{1,\mu}_\bullet(1)[\ka^{-i}r>1]\right),\quad 0<r<1,
\end{align}
where $[\cdot]$ is Iverson's bracket notation (i.e., $[P]=1$ when $P$ is true, while $[P]=0$ otherwise). We note that $\tilde\eta_\bullet^{1,\mu}\ge \eta_\bullet^{1,\mu}$ and $\tilde\eta_\bullet^{1,\mu}$ is a Dini function satisfying \eqref{eq:alm-mon}. See e.g., \cite{Don12}. Given $0<\be<1$, we also set
\begin{align}
    \label{eq:hat-eta}
    \hat\eta_\bullet^{1,\mu}(r):=\sup_{\rho\in [r,1)}(r/\rho)^{\be}\tilde\eta_\bullet^{1,\mu}(\rho),\quad 0<r<1.
\end{align}
Note that $\tilde\eta^{1,\mu}_\bullet$ is a Dini function satisfying \eqref{eq:alm-mon} and that $\hat\eta_\bullet^{1,\mu}\ge\tilde\eta_\bullet^{1,\mu}$ and $r\longmapsto \frac{\hat\eta_\bullet^{1,\mu}(r)}{r^{\be}}$ is nonincreasing. See e.g., \cites{Don12, JeoVit23}.

\medskip
Now, we start the proof of Theorem~\ref{thm:deg-pde} by first addressing the case when the center of the ball lies on $B_3'$.

In this subsection $u$ is a solution of \eqref{eq:deg-pde}, where both $A$ and $\bg$ are of $L^1(d\mu)$-DMO, as in Theorem~\ref{thm:deg-pde}. We write for simplicity $\|\bg\|_\infty=\|\bg\|_{L^\infty(B_4^+)}$.

\begin{lemma}\label{lem:deg-bdry}
     Let $\bar x_0\in B'_3$, $0<\be<1$ and $0<p<1$. Then, for any $0<\rho<r<1/2$, it holds that
    \begin{align}\label{eq:deg-phi-est-bdry}\begin{split}
    \psi(\bar x_0,\rho)&\le C(\rho/r)^{\be}\psi(\bar x_0,r)+C\|\D u\|_{L^\infty(B^+_{2r}(\bar x_0))}\tilde\eta_A^{1,\mu}(2\rho)+C\tilde\eta_\bg^{1,\mu}(2\rho),
    \end{split}\end{align}
    where $C=C(n,\la,\al,p,\be)>0$ are constants and $\tilde\eta_\bullet^{1,\mu}$ is as in \eqref{eq:tilde-eta}.
\end{lemma}

\begin{proof}
Without loss of generality, we may assume $\bar x_0=0$. We fix $0<r<1/2$ and write for simplicity $\bar A=\mean{A}^\mu_{B^+_{2r}}$ and $\bar\bg=\mean{\bg}^\mu_{B^+_{2r}}$. Note that $u$ satisfies the following equation
$$
-\div(x_n^\al\bar A\D u)=\div(x_n^\al\bg)+\div(x_n^\al(A-\bar A)\D u).
$$
We set
$$
\hat u(x):=u(x)+\bar a_{nn}^{-1}\bar g_nx_n,
$$
and observe that it solves
\begin{align*}
  \begin{cases}
    -\div(x_n^\al\bar A\D\hat u)=\div(x_n^\al((A-\bar A)\D u+\bg-\bar\bg))&\text{in }B^+_{2r},\\
    \lim_{x_n\to0+}x_n^\al\mean{\bar A\D\hat u+(A-\bar A)\D u+\bg-\bar\bg,\vec{e}_n}=0&\text{on }B'_{2r}.
\end{cases}\end{align*}
We take a smooth and convex domains $\cD_r$ and $\tilde\cD_r$ such that $B_r^+\subset\cD_r\subset B_{\frac43r}^+$and $B^+_{\frac32r}\subset\tilde\cD_r\subset B^+_{2r}$, and write $\cD'_r=\overline{\cD_r}\cap B'_4$ and $\tilde\cD'_r=\overline{\tilde\cD_r}\cap B'_4$. We decompose $\hat u=\hat v+\hat w$, where $\hat w$ is a solution of
$$
-\div(x_n^\al\bar A\D\hat w)=\div\left(x_n^\al\left((A-\bar A)\D u+\bg-\bar\bg\right)\chi_{\cD_r}\right)\quad\text{in }\tilde\cD_{r}
$$
with the boundary condition
$$
x_n^\al\mean{\bar A\D\hat w+((A-\bar A)\D u+\bg-\bar\bg)\chi_{\cD_r},\nu}=0\quad\text{on }\partial\tilde\cD_{r}.
$$
For any $t>0$, we have by applying Lemma~\ref{lem:weak-type-(1,1)} with scaling
$$
\mu(\{x\in B_r^+\,:\,|\D\hat w(x)|>t\})\le \frac{C(n,\la,\al)}t\int_{B_{2r}^+}|(A-\bar A)\D u+\bg-\bar\bg|d\mu.
$$
This inequality implies that for any $\tau\in(0,+\infty)$
\begin{align*}
    &\int_{B_r^+}|\D\hat w|^pd\mu=\int_0^{+\infty} pt^{p-1}\mu(\{x\in B_r^+\,:\, |\D\hat w(x)|>t\})\,dt\\
    &\le \int_0^\tau pt^{p-1}\mu(B_r^+)\,dt+\int_\tau^{+\infty} pt^{p-1}\left(\frac{C}t\int_{B_{2r}^+}|(A-\bar A)\D u+\bg-\bar\bg|d\mu\right)dt\\
    &\le C\mu(B_r^+)\tau^p+C\left(\int_{B_{2r}^+}|(A-\bar A)\D u+\bg-\bar\bg|d\mu\right)\tau^{p-1}.
\end{align*}
Taking $\tau=\frac{\int_{B_{2r}^+}|(A-\bar A)\D u+\bg-\bar\bg|d\mu}{\mu(B_r^+)}$, we get
$$
\int_{B_r^+}|\D\hat w|^pd\mu\le C\mu(B_r^+)^{1-p}\left(\int_{B_{2r}^+}|(A-\bar A)\D u+\bg-\bar\bg|d\mu\right)^p.
$$
It follows that
\begin{align}
    \label{eq:deg-sol-hom-diff-est}
    \begin{split}
        \left(\dashint_{B_r^+}|\D\hat w|^pd\mu\right)^{1/p}&\le C\mu(B_r^+)^{-1}\int_{B_{2r}^+}|(A-\bar A)\D u+\bg-\bar\bg|d\mu\\
        &\le C\|\D u\|_{L^\infty(B_{2r}^+)}\eta_A^{1,\mu}(2r)+C\eta_\bg^{1,\mu}(2r).
    \end{split}
\end{align}

Next, we observe that $\hat v=\hat u-\hat w$ satisfies
\begin{align}\label{eq:hom-repla-pde}\begin{cases}
\div(x_n^\al\bar A\D\hat v)=0&\text{in }\cD_{r},\\
\lim_{x_n\to0^+}x_n^\al\mean{\bar A\D\hat v,\vec{e}_n}=0&\text{on }\cD'_r.
\end{cases}\end{align}
Since $D_i\hat v$ satisfies the same equation for $1\le i\le n-1$, we have by applying \cite{DonPha20}*{Lemma~4.2, Proposition~4.4} with a standard iteration
\begin{align}\label{eq:hom-partial-est}
\|D D_i\hat v\|_{L^\infty(B_{r/2}^+)}\le \frac{C}{r}\left(\dashint_{B_r^+}|D_i\hat v|^pd\mu\right)^{1/p}\le \frac{C}r\left(\dashint_{B_r^+}|\D_{x'}\hat v|^pd\mu\right)^{1/p},
\end{align}
where $\D_{x'}\hat v=(\partial_{x_1}\hat v,\cdots,\partial_{x_{n-1}}\hat v)$. To obtain the similar estimate for $D_{nn}\hat v$, we use an idea in \cite{DonPha20}. Denote
$$\hat V:=\mean{\bar A\D\hat v,\vec{e}_n}=\sum_{j=1}^n\bar{a}_{nj}D_j\hat v.$$
From the equation $\div(x_n^\al\bar A\D \hat v)=0$ in $\cD_{r}$, we infer
$$
\partial_n\left(x_n^\al\hat V\right)=-x_n^\al\sum_{i=1}^{n-1}\sum_{j=1}^n\bar a_{ij}D_{ij}\hat v.
$$
By combining this with $\lim_{x_n\to0^+}x_n^\al\hat V(x)=0$ and \eqref{eq:hom-partial-est}, we deduce that for any $x\in B_{r/2}^+$
\begin{align}
    \label{eq:hom-n-deriv-est}
    \begin{split}
        \left|x_n^\al\hat V(x)\right|&=\left|\int_0^{x_n}\partial_s(s^\al\hat V(x',s))ds\right|\le \int_0^{x_n}s^\al\sum_{i=1}^{n-1}\sum_{j=1}^n|\bar a_{ij}D_{ij}\hat v(x',s)|ds\\
        &\le \frac{C}r\left(\dashint_{B_r^+}|\D_{x'}\hat v|^pd\mu\right)^{1/p}\int_0^{x_n}s^\al\,ds\le \frac{Cx_n^{\al+1}}r\left(\dashint_{B_r^+}|\D_{x'}\hat v|^pd\mu\right)^{1/p}.
    \end{split}
\end{align}
This gives
$$
|\hat V(x)|\le \frac{Cx_n}r\left(\dashint_{B_r^+}|\D_{x'}\hat v|^pd\mu\right)^{1/p},\quad x\in B^+_{r/2}.
$$
Combining this with \eqref{eq:hom-partial-est} gives that for small $\ka\in(0,1/2)$ to be chosen later 
\begin{align}
    \label{eq:hom-repla}\begin{split}
    &\left(\dashint_{B_{\ka r}^+}\left(|\D_{x'}\hat v-\mean{\D_{x'}\hat v}_{B_{\ka r}^+}|^p+|\hat V|^p\right)d\mu\right)^{1/p}\\
    &\le C\ka \left(\dashint_{B_r^+}|\D \hat v|^pd\mu\right)^{1/p}\le C\ka\left(\dashint_{B_r^+}\left(|\D_{x'}\hat v|^p+|\hat V|^p\right)d\mu\right)^{1/p}.
\end{split}\end{align}
For any constant vector $\bq'=(q_1,\ldots q_{n-1})\in \R^{n-1}$, we set
$$
\tilde v(X):=\hat v(X)-\mean{\bq',x'}+\int_0^{x_n}[\bar a_{nn}(y_n)]^{-1}\left(\sum_{i=1}^{n-1}\bar a_{ni}(y_n)q_i\right)dy_n$$
and
$$\tilde V:=\mean{\bar A\D\tilde v,\vec{e}_n}.
$$
It is easily seen that $\tilde v$ satisfies \eqref{eq:hom-repla-pde} and $\tilde V=\hat V$ in $B_r^+$. This enables us to replace $\hat v$ and $\hat V$ with $\tilde v$ and $\tilde V$ in \eqref{eq:hom-repla}, respectively, to have
\begin{align}
    \label{eq:hom-repla-iter}
    \begin{split}&\left(\dashint_{B_{\ka r}^+}\left(|\D_{x'}\hat v-\mean{\D_{x'}\hat v}_{B_{\ka r}^+}|^p+|\hat V|^p\right)d\mu\right)^{1/p}\\
    &\le C\ka\left(\dashint_{B_r^+}\left(|\D_{x'}\hat v-\bq'|^p+|\hat V|^p\right)d\mu\right)^{1/p}.
\end{split}\end{align}
Let $\hat U:=\mean{\bar A\D \hat u,\vec{e}_n}$. By using $\hat U=\hat V+\mean{\bar A\D \hat w,\vec{e}_n}$ and \eqref{eq:hom-repla-iter}, we get
\begin{align}
    \label{eq:u_e-est}
    \begin{split}&\left(\dashint_{B_{\ka r}^+}\left(|\D_{x'}\hat u-\mean{\D_{x'}\hat v}_{B_{\ka r}^+}|^p+|\hat U|^p\right)d\mu\right)^{1/p}\\
    &\le C\left(\dashint_{B_{\ka r}^+}\left(|\D_{x'}\hat v-\mean{\D_{x'}\hat v}_{B_{\ka r}^+}|^p+|\hat V|^p\right)d\mu\right)^{1/p}+C\left(\dashint_{B_{\ka r}^+}|\D \hat w|^pd\mu\right)^{1/p}\\
    &\le C\ka \left(\dashint_{B_r^+}\left(|\D_{x'}\hat v-\bq'|^p+|\hat V|^p\right)d\mu\right)^{1/p}+C\left(\dashint_{B_{\ka r}^+}|\D \hat w|^pd\mu\right)^{1/p}\\
    &\le C\ka \left(\dashint_{B_r^+}\left(|\D_{x'}\hat u-\bq'|^p+|\hat U|^p\right)d\mu\right)^{1/p}+C\ka^{-\frac{n+\al}p}\left(\dashint_{B_{r}^+}|\D \hat w|^pd\mu\right)^{1/p}.
\end{split}\end{align}
From $u=\hat u-\bar{a}^{-1}_{nn}\bar g_nx_n$, we find $\D_{x'}u=\D_{x'}\hat u$ and 
\begin{align*}
    U=\mean{\bar A\D u+\bar\bg,\vec{e}_n}+\mean{(A-\bar A)\D u+\bg-\bar\bg,\vec{e}_n}=\hat U+\mean{(A-\bar A)\D u+\bg-\bar\bg,\vec{e}_n}.
\end{align*}
Combining this with \eqref{eq:u_e-est} and \eqref{eq:deg-sol-hom-diff-est} produces
\begin{align*}
    &\left(\dashint_{B_{\ka r}^+}\left(|\D_{x'}u-\mean{\D_{x'}\hat v}_{B_{\ka r}^+}|^p+|U|^p\right)d\mu\right)^{1/p}\\
    &\le C\left(\dashint_{B_{\ka r}^+}\left(|\D_{x'}\hat u-\mean{\D_{x'}\hat v}_{B_{\ka r}^+}|^p+|\hat U|^p\right)d\mu\right)^{1/p}\\
    &\qquad+C\left(\dashint_{B_{\ka r}^+}|\mean{(A-\bar A)\D u+\bg-\bar\bg,\vec{e}_n}|^pd\mu\right)^{1/p} \\
    &\le C\ka\left(\dashint_{B_{r}^+}\left(|\D_{x'}\hat u-\bq'|^p+|\hat U|^p\right)d\mu\right)^{1/p}+C\ka^{-\frac{n+\al}p}\left(\dashint_{B_r^+}|\D \hat w|^pd\mu\right)^{1/p}  \\
    &\qquad+C\left(\dashint_{B_{\ka r}^+}|\mean{(A-\bar A)\D u+\bg-\bar\bg,\vec{e}_n}|^pd\mu\right)^{1/p}\\
    &\le C\ka\left(\dashint_{B_{r}^+}\left(|\D_{x'}u-\bq'|^p+|U|^p\right)d\mu\right)^{1/p}+C\ka^{-\frac{n+\al}p}\left(\dashint_{B_r^+}|\D \hat w|^pd\mu\right)^{1/p}  \\
    &\qquad+C\ka^{-\frac{n+\al}p}\left(\dashint_{B_{2r}^+}|\mean{(A-\bar A)\D u+\bg-\bar\bg,\vec{e}_n}|^pd\mu\right)^{1/p}\\
    &\le C\ka \left(\dashint_{B_{r}^+}\left(|\D_{x'}u-\bq'|^p+|U|^p\right)d\mu\right)^{1/p}\\
    &\qquad+C\ka^{-\frac{n+\al}p}\left(\|\D u\|_{L^\infty(B^+_{2r})}\eta_A^{1,\mu}(2r)+\eta_\bg^{1,\mu}(2r)\right).
\end{align*}
Since $\bq'\in \R^{n-1}$ is arbitrary, we get
$$
\psi(0,\ka r)\le C\ka\psi(0,r)+C\ka^{-\frac{n+\al}p}\left(\|\D u\|_{L^\infty(B^+_{2r})}\eta_A^{1,\mu}(2r)+\eta_\bg^{1,\mu}(2r)\right).
$$
By taking $\ka$ small so that $C\ka\le \ka^{\be}$ and using a standard iteration, we obtain \eqref{eq:deg-phi-est-bdry}.   
\end{proof}

Next, we consider the interior case when the ball $B_r(x_0)$ is away from the boundary $B'_4$. It should be noted that when $r<(x_0)_n/2$, $u$ solves an uniformly elliptic equation
$$
-\div(A^{x_0}\D u)=\div(\bg^{x_0})\quad\text{in }B_r(x_0),
$$
where $A^{x_0}:=(x_n/(x_0)_n)^\al A$ and $\bg^{x_0}:=(x_n/(x_0)_n)^\al\bg$. It may seem tempting to directly apply the result from \cite{DonKim17} to obtain estimates involving $\vp$. However, this approach leads to the mean-oscillations of $A^{x_0}$ and $\bg^{x_0}$. When bounding them by those of $A$ and $\bg$, we cannot avoid extra terms that depend on $(x_0)_n$. Handling these terms becomes challenging as the center $x_0$ approaches to the boundary $B'_4$.

To rectify this issue, we utilize the concept of partially Dini mean oscillation (partially DMO). A function $f\in L^1(B_1^+,dx)$ is said to be of \emph{$L^1(dx)$-partially DMO with respect to $x'$} in $B_1^+$ if
$$
\eta_f^{x'}(r):=\sup_{x_0\in B_1^+}\dashint_{B_r(x_0)\cap B_1^+}\left|f(x)-\dashint_{B'_r(x'_0)\cap B'_1}f(y',x_n)dy'\right|dx, \quad 0<r<1,
$$
is Dini. This partially DMO requirement is weaker than the standard DMO, and it holds that $\eta^{x'}_f(r)\le C\eta_f(r)$. For further understanding of the regularity results under the partially DMO condition, one can refer to \cites{Don12, DonXu19}.

Now we instead view new data $A^{x_0}$ and $\bg^{x_0}$ as being of partially DMO with respect to $x'$-variable, and apply a result in \cite{DonXu19}. As we will see in the proof of Lemma~\ref{lem:deg-int}, this approach allows us to avoid the additional term containing $(x_0)_n$, as $(x_n/(x_0)_n)^\al$ remains constant with respect to $x'$.

When $x_0=(x_0',(x_0)_n)\in B_3^+$ is away from the thin ball $B_4'$ and there is no confusion, we write for simplicity
$$
d:=(x_0)_n=\dist(x_0,B_4')>0.
$$

\begin{lemma}
    \label{lem:deg-int}
Let $x_0\in B_3^+$ and $0<\be<1$. For any $0<\rho<r\le d/2$,
\begin{align}
    \label{eq:deg-int-est}
    \psi(x_0,\rho)\le C(\rho/r)^{\be}\psi(x_0,r)+C\|\D u\|_{L^\infty(B_r(x_0))}\tilde\eta_A^{1,\mu}(\rho)+C\tilde\eta_\bg^{1,\mu}(\rho),
\end{align}
where $C=C(n,\la,\al,p,\be)>0$ are constants and $\tilde\eta_\bullet^{1,\mu}$ is as in \eqref{eq:tilde-eta}.
\end{lemma}

\begin{proof}
As mentioned above, $u$ solves
$$
-\div(A^{x_0}\D u)=\div(\bg^{x_0})\quad\text{in }B_r(x_0),
$$
where $A^{x_0}=(x_n/d)^\al A$ and $\bg^{x_0}=(x_n/d)^\al\bg$ are partially DMO with respect to $x'$. Then, it can be deduced from the proof of \cite{DonXu19}*{Lemma~3.3} (see also \cite{ChoKimLee20}*{Lemma~2.5}) that for any $0<\ka<1/2$ and $0<r\le d/2$,
\begin{align*}
    \psi^0(x_0,\ka r)&\le C\ka\psi^0(x_0,r)\\
    &\,\,+C\ka^{-n/p}\Bigg(\|\D u\|_{L^\infty(B_r(x_0))}\dashint_{B_r(x_0)}\left|A^{x_0}(x)-\dashint_{B_r'(x_0')}A^{x_0}(y',x_n)dy'\right|dx\\
    &\qquad\qquad\qquad+\dashint_{B_r(x_0)}\left|\bg^{x_0}(x)-\dashint_{B_r'(x_0')}\bg^{x_0}(y',x_n)dy'\right|dx\Bigg),
\end{align*}
where $C=C(n,\al,\la,p)>0$ and
$$\psi^0(x_0,r)=\inf_{\bq\in\R^n}\left(\dashint_{B_r(x_0)}|\mean{\D_{x'}u,\mean{A^{x_0}\D u+\bg^{x_0},\vec{e}_n}}-\bq|^pdx\right)^{1/p}.$$
From the identity $\mean{A^{x_0}\D u+\bg^{x_0},\vec{e}_n}=U^{x_0}$ and the fact that $d/2<x_n<3d/2$ for every $x\in B_r(x_0)$, we have
$$
\psi(x_0,\rho)\le C\psi^0(x_0,\rho),\quad \psi^0(x_0,r)\le C\psi(x_0,r).
$$
Moreover,
\begin{align*}
    &\dashint_{B_r(x_0)}\left|A^{x_0}(x)-\dashint_{B_r'(x_0')}A^{x_0}(y',x_n)dy'\right|dx\\
    &\qquad\le C\dashint_{B_r(x_0)}\left|A(x)-\dashint_{B_r'(x_0')}A(y',x_n)dy'\right|d\mu(x)\\&
    \qquad\le C\eta_A^{1,\mu}(r).
\end{align*}
Clearly, a similar estimate holds for $\bg$. Thus, we obtain
$$
\psi(x_0,\ka r)\le C\ka\psi(x_0,r)+C\ka^{-n/p}\left(\|\D u\|_{L^\infty(B_r(x_0))}\eta_A^{1,\mu}(r)+\eta_\bg^{1,\mu}(r)\right).
$$
Taking $\ka\in(0,1/2)$ small and performing a standard iteration as before, we conclude \eqref{eq:deg-int-est}.
\end{proof}

We now combine the previous two lemmas to establish the following uniform decay estimate of $\psi$.

\begin{lemma}
    \label{lem:psi-decay}
    Suppose $0<\be<1$. Then, for any $x_0\in B_3^+$ and $0<\rho\le r<\frac1{14}$, we have
\begin{align}
    \label{eq:psi-decay-est}
    \psi(x_0,\rho)\le \begin{multlined}[t]C(\rho/r)^{\be}\left(\dashint_{B_{8r}(x_0)\cap B_4^+}|\D u|d\mu+\|\bg\|_\infty\right)\\+C\|\D u\|_{L^\infty(B_{14r}(x_0)\cap B_4^+)}\hat\eta_A^{1,\mu}(\rho)+C\hat\eta_\bg^{1,\mu}(\rho),
\end{multlined}\end{align}
where $C=C(n,\al,\la,p,\be)>0$ and $\hat\eta_\bullet^{1,\mu}$ is defined as in \eqref{eq:hat-eta}.
\end{lemma}

\begin{proof}
We divide the proof into the three cases:
$$
\rho< r<d/2,\quad d/2<\rho< r,\,\text{ or }\,\rho\le d/2\le r.
$$

\medskip\noindent\emph{Case 1.} We first consider the case $\rho<r<d/2$. By Lemma~\ref{lem:deg-int}
\begin{align}
    \label{eq:psi-decay-int}
    \psi(x_0,\rho)\le C(\rho/r)^{\be}\psi(x_0,r)+C\left(\|\D u\|_{L^\infty(B_r(x_0))}\tilde\eta_A^{1,\mu}(\rho)+\tilde\eta_\bg^{1,\mu}(\rho)\right).
\end{align}
Using $|U^{x_0}|\le C|A\D u+\bg|$ in $B_r(x_0)$, we get
\begin{align}\label{eq:psi-bound}
\begin{split}\psi(x_0,r)&\le\left(\dashint_{B_r(x_0)}|\mean{\D_{x'}u,U^{x_0}}|^pd\mu\right)^{1/p}\le C\left(\dashint_{B_r(x_0)}|\D u|d\mu+\|\bg\|_\infty\right).\end{split}
\end{align}
This, along with \eqref{eq:psi-decay-int} and the doubling property of $\mu$ (Lemma~\ref{lem:doubl}), implies \eqref{eq:psi-decay-est}.

\medskip\noindent\emph{Case 2.} Suppose $d/2<\rho<r$. For $\bar x_0:=(x_0',0)\in B'_3$, the doubling and Lemma~\ref{lem:deg-bdry} yield
\begin{align*}
    \psi(x_0,\rho)&\le C\psi(\bar x_0,3\rho)\\
    &\le C(\rho/r)^{\be}\psi(\bar x_0,3r)+C\left(\|\D u\|_{L^\infty(B_{6r}^+(\bar x_0))}\tilde\eta_A^{1,\mu}(6\rho)+\tilde\eta_\bg^{1,\mu}(6\rho)\right).
\end{align*}
Moreover, we have
\begin{align}\label{eq:psi-bound-1}\begin{split}
    \psi(\bar x_0,3r)&\le \left(\dashint_{B_{3r}^+(\bar x_0)}|\mean{\D_{x'}u,U}|^pd\mu\right)^{1/p}\le C\left(\dashint_{B_{5r}(x_0)\cap B_4^+}|\D u|d\mu+\|\bg\|_\infty\right).
\end{split}\end{align}
Thus,
\begin{align*}
\psi(x_0,\rho)&\le C(\rho/r)^{\be}\left(\dashint_{B_{5r}(x_0)\cap B_4^+}|\D u|d\mu+\|\bg\|_\infty\right)\\
&\quad +C\left(\|\D u\|_{L^\infty(B_{8r}(x_0)\cap B_4^+)}\tilde\eta_A^{1,\mu}(6\rho)+\tilde\eta_\bg^{1,\mu}(6\rho)\right).
\end{align*}
From the fact that $\tilde\eta_\bullet^{1,\mu}\le \hat\eta_\bullet^{1,\mu}$ and $t\longmapsto \frac{\hat\eta_\bullet^{1,\mu}(t)}{t^{\be}}$ is nonincreasing, we infer
$$
\tilde\eta_\bullet^{1,\mu}(6\rho)\le \hat\eta_\bullet^{1,\mu}(6\rho)\le 6^{\be}\hat\eta_\bullet^{1,\mu}(\rho).
$$
This concludes \eqref{eq:psi-decay-est}.

\medskip\noindent\emph{Case 3.} It remains to deal with the case $\rho\le d/2\le r$. We observe that
\begin{align*}
    \left(\dashint_{B_{d/2}(x_0)}|U^{x_0}|^pd\mu\right)^{1/p}&\le C\left(\dashint_{B_{d/2}(x_0)}|U|^pd\mu\right)^{1/p}\le C\left(\dashint_{B_{3d/2}(\bar x_0)}|U|^pd\mu\right)^{1/p}\\
    &\le C\psi(\bar x_0,3d/2),
\end{align*}
which gives
\begin{align*}
    \psi(x_0,d/2)&\le C\inf_{\bq'\in\R^{n-1}}\left(\dashint_{B_{d/2}(x_0)}|\D_{x'}u-\bq'|^pd\mu\right)^{1/p}\\
    &\qquad +C\left(\dashint_{B_{d/2}(x_0)}|U^{x_0}|^pd\mu\right)^{1/p}\\
    &\le C\psi(\bar x_0,3d/2).
\end{align*}
By using this and Lemmas~\ref{lem:deg-bdry} and \ref{lem:deg-int}, we derive
\begin{align*}
    \psi(x_0,\rho)&\le C(\rho/d)^\be\psi(x_0,d/2)+C\left(\|\D u\|_{L^\infty(B_{d/2}(x_0))}\tilde\eta_A^{1,\mu}(\rho)+\tilde\eta_\bg^{1,\mu}(\rho)\right)\\
    &\le C(\rho/d)^\be\psi(\bar x_0,3d/2)+C\left(\|\D u\|_{L^\infty(B_{d/2}(x_0))}\tilde\eta_A^{1,\mu}(\rho)+\tilde\eta_\bg^{1,\mu}(\rho)\right)\\
    &\le C(\rho/r)^\be\psi(\bar x_0,3r)+C\left(\|\D u\|_{L^\infty(B_{d/2}(x_0))}\tilde\eta_A^{1,\mu}(\rho)+\tilde\eta_\bg^{1,\mu}(\rho)\right)\\
    &\qquad+C(\rho/d)^\be\left(\|\D u\|_{L^\infty(B_{6r}^+(\bar x_0))}\tilde\eta_A^{1,\mu}(3d)+\tilde\eta_\bg^{1,\mu}(3d)\right)\\
    &\le C(\rho/r)^\be\left(\dashint_{B_{5r}(x_0)\cap B_4^+}|\D u|d\mu+\|\bg\|_\infty\right)\\
    &\qquad+C\left(\|\D u\|_{L^\infty(B_{8r}(x_0)\cap B^+_4)}\hat\eta_A^{1,\mu}(\rho)+\hat\eta_\bg^{1,\mu}(\rho)\right),
\end{align*}
where in the last step we used $\tilde\eta_\bullet^{1,\mu}\le \hat\eta_\bullet^{1,\mu}$ and the monotonicity of $s\longmapsto \frac{\hat\eta_\bullet^{1,\mu}(s)}{s^{\be}}$. This completes the proof.
\end{proof}
Our next objective is the $L^\infty$-estimate of $\D u$, which enables us to remove the $C^1$ assumption on $u$ by a standard approximation argument. For this purpose, we follow the idea in \cite{DonEscKim18}*{Lemma~2.11}.
For $x_0\in B_3^+$ and $0<r<1/14$, we take a vector $\bq_{x_0,r}\in\R^n$ such that
\begin{align}\label{eq:psi-inf}
    \psi(x_0,r)=\begin{cases}
        \left(\dashint_{B_r(x_0)}|\mean{\D_{x'}u,U^{x_0}}-\bq_{x_0,r}|^pd\mu\right)^{1/p},&0<r\le d/2,\\
        \left(\dashint_{B_r(x_0)\cap B_4^+}|\mean{\D_{x'}u, U}-\bq_{x_0,r}|^pd\mu\right)^{1/p},&d/2<r<1/14.
    \end{cases}
\end{align}
Note that the last component of $\bq_{x_0,r}$ is zero when $d/2<r<1/14$.

\begin{lemma}\label{lem:gradient-unif-est}
It holds that
    \begin{align}\label{eq:gradient-unif-est}
        \|\D u\|_{L^\infty(B_2^+)}\le C\int_{B_4^+}|\D u|d\mu+C\int_0^1\frac{\hat\eta_\bg^{1,\mu}(t)}tdt+C\|\bg\|_{L^\infty(B_4^+)},
    \end{align}
    for some constant $C>0$ depending only on $n,\la,\al,p$.
\end{lemma}

\begin{proof}
We split the proof of this lemma into two steps.

\medskip\noindent\emph{Step 1.}
We claim that for any $x_0\in B_3^+$ and $0<r\le 1/18$,
\begin{align}
    \label{eq:gradient-infty-bound}
    |\D u(x_0)|\le \begin{multlined}[t]C\dashint_{B_{10r}(x_0)\cap B_4^+}|\D u|d\mu+C\|\bg\|_\infty\\
    +C\|\D u\|_{L^\infty(B_{18r}(x_0)\cap B_4^+)}\int_0^r\frac{\hat\eta_A^{1,\mu}(t)}tdt+C\int_0^r\frac{\hat\eta_\bg^{1,\mu}(t)}tdt.
\end{multlined}\end{align}
To prove the claim \eqref{eq:gradient-infty-bound}, we consider two cases:
$$
0<r\le d/2 \,\,\,\text{ or }\,\,\, d/2<r\le 1/18.
$$

\medskip\noindent\emph{Case 1.} Suppose $r\le d/2$. We take average of the trivial inequality
$$
|\bq_{x_0,r}-\bq_{x_0,r/2}|^p\le |\mean{\D_{x'}u(x),U^{x_0}(x)}-\bq_{x_0,r}|^p+|\mean{\D_{x'}u(x),U^{x_0}(x)}-\bq_{x_0,r/2}|^p
$$
over $x\in B_{r/2}(x_0)$ with respect to $d\mu$ and take the $p$-th root to get
$$
|\bq_{x_0,r}-\bq_{x_0,r/2}|\le C\left(\psi(x_0,r)+\psi(x_0,r/2)\right).
$$
By iterating, we further have
\begin{align}
    \label{eq:q-diff-est}
    |\bq_{x_0,2^{-k}r}-\bq_{x_0,r}|\le C\sum_{j=0}^k\psi(x_0,2^{-j}r).
\end{align}
Note that by \eqref{eq:psi-decay-est}
$$
\lim_{k\to +\infty}\psi(x_0,2^{-k}r)=0,
$$
which along with the assumption $u\in C^1(\overline{B_3^+})$ implies
$$
\lim_{k\to +\infty}\bq_{x_0,2^{-k}r}=\mean{\D_{x'}u(x_0),U^{x_0}(x_0)}=\mean{\D_{x'}u(x_0),U(x_0)}.
$$
Thus, by taking $k\to +\infty$ in \eqref{eq:q-diff-est} and using \eqref{eq:psi-decay-est}, we obtain
\begin{align}
    \label{eq:u-q-diff-est}
    \begin{split}
        &|\mean{\D_{x'}u(x_0),U(x_0)}-\bq_{x_0,r}|\\
        &\qquad\le \begin{multlined}[t]C\dashint_{B_{8r}(x_0)\cap B_4^+}|\D u|d\mu+C\|\bg\|_\infty
        \\+C\|\D u\|_{L^\infty(B_{14r}(x_0)\cap B_4^+)}\int_0^r\frac{\hat\eta_A^{1,\mu}(t)}tdt+C\int_0^r\frac{\hat\eta_\bg^{1,\mu}(t)}tdt.
        \end{multlined}
    \end{split}
\end{align}
On the other hand, we have for any $x\in B_r(x_0)$,
\begin{align*}
    |\bq_{x_0,r}|^p&\le |\mean{\D_{x'}u(x),U^{x_0}(x)}-\bq_{x_0,r}|^p+|\mean{\D_{x'}u(x),U^{x_0}(x)}|^p\\
    &\le |\mean{\D_{x'}u(x),U^{x_0}(x)}-\bq_{x_0,r}|^p+C\left(|\D u(x)|^p+|\bg(x)|^p\right).
\end{align*}
Taking average of this over $x\in B_r(x_0)$ with respect to $d\mu$ and taking the $p$-th root yield
$$
|\bq_{x_0,r}|\le C\psi(x_0,r)+C\left(\dashint_{B_r(x_0)}|\D u|^pd\mu\right)^{1/p}+C\|\bg\|_\infty.
$$
Due to \eqref{eq:psi-bound}, we further have
$$
|\bq_{x_0,r}|\le C\dashint_{B_r(x_0)}|\D u|d\mu+C\|\bg\|_\infty.
$$
Combining this with \eqref{eq:u-q-diff-est}, we infer
\begin{align*}
    |\D u(x_0)|&\le C\left(|\mean{\D_{x'}u(x_0),U(x_0)}|+\|\bg\|_\infty\right)\\
    &\le \begin{multlined}[t] C\dashint_{B_{8r}(x_0)\cap B_4^+}|\D u|d\mu+C\|\bg\|_\infty\\
    +C\|\D u\|_{L^\infty(B_{14r}(x_0)\cap B_4^+)}\int_0^r\frac{\hat\eta_A^{1,\mu}(t)}tdt+C\int_0^r\frac{\hat\eta_\bg^{1,\mu}(t)}tdt.
    \end{multlined}
\end{align*}

\medskip\noindent\emph{Case 2.} Next, we consider the case $d/2<r<1/18$. We take a nonnegative integer $j_0$ such that $2^{-(j_0+1)}r\le d/2<2^{-j_0}r$. By using the idea at the beginning of Case 1, we can easily obtain that for every $j\ge0$ with $j\neq j_0$
$$
|\bq_{x_0,2^{-j}r}-\bq_{x_0,2^{-(j+1)}r}|\le C\left(\psi(x_0,2^{-j}r)+\psi(x_0,2^{-(j+1)}r)\right).
$$
However, the bound is nontrivial when $j=j_0$ due to the discrepancy between $U^{x_0}$ and $U$. By iteration, we have that for any $k\ge j_0+1$
\begin{align}\begin{split}
    \label{eq:q-diff-est-sum}
    |\bq_{x_0,r}-\bq_{x_0,2^{-k}r}|\le C\sum_{j=0}^{+\infty}\psi(x_0,2^{-j}r)+|\bq_{x_0,2^{-j_0}r}-\bq_{x_0,2^{-(j_0+1)}r}|.
\end{split}\end{align}
We can use \eqref{eq:psi-decay-est} to estimate the first term in the right-hand side of \eqref{eq:q-diff-est-sum}:
\begin{align}
    \label{eq:psi-est-sum}
    \sum_{j=0}^{+\infty}\psi(x_0,2^{-j}r)\le \begin{multlined}[t] C\dashint_{B_{8r}(x_0)\cap B_4^+}|\D u|d\mu+C\|\bg\|_\infty\\
    +C\|\D u\|_{L^\infty(B_{14r}(x_0)\cap B_4^+)}\int_0^r\frac{\hat\eta_A^{1,\mu}(t)}tdt+C\int_0^r\frac{\hat\eta_\bg^{1,\mu}(t)}tdt.
    \end{multlined}
\end{align}
To treat the second term, we observe that for $x\in B_{2^{-(j_0+1)}r}(x_0)$
\begin{multline*}
    |\bq_{x_0,2^{-j_0}r}-\bq_{x_0,2^{-(j_0+1)}r}|^p\\
    \le |\mean{\D_{x'}u,U^{x_0}}-\bq_{x_0,2^{-(j_0+1)}r}|^p+|\mean{\D_{x'}u,U}-\bq_{x_0,2^{-j_0}r}|^p+|U^{x_0}-U|^p.
\end{multline*}
Arguing as before, we can deduce from this inequality
\begin{multline*}
    |\bq_{x_0,2^{-j_0}r}-\bq_{x_0,2^{-(j_0+1)}r}|\\
    \le C\psi(x_0,2^{-(j_0+1)}r)+C\psi(x_0,2^{-j_0}r)+C\left(\dashint_{B_{2^{-(j_0+1)}r}(x_0)}|U^{x_0}-U|^pd\mu\right)^{1/p}.
\end{multline*}
We recall $2^{-(j_0+1)}r\le d/2< 2^{-j_0}r$ and apply Lemma~\ref{lem:deg-bdry} to obtain
\begin{align*}
    &\left(\dashint_{B_{2^{-(j_0+1)}r}(x_0)}|U^{x_0}-U|^p d\mu\right)^{1/p}\\
    &\qquad\le C\left(\dashint_{B_{d/2}(x_0)}|U|^pd\mu\right)^{1/p}\le C\left(\dashint_{B_{2^{2-j_0}r}(\bar x_0)}|U|^pd\mu\right)^{1/p}\\
    &\qquad\le C\psi(\bar x_0,2^{3-j_0}r)+C\|\D u\|_{L^\infty(B^+_{16r}(\bar x_0))}\eta_A^{1,\mu}(2^{4-j_0}r)+C\eta_\bg^{1,\mu}(2^{4-j_0}r)\\
    &\qquad\le C\psi(\bar x_0,8r)+C\|\D u\|_{L^\infty(B^+_{16r}(\bar x_0))}\tilde\eta_A^{1,\mu}(2^{4-j_0}r)+C\tilde\eta_\bg^{1,\mu}(2^{4-j_0}r).
\end{align*}
By combining the preceding two estimates, we get
\begin{align}\label{eq:q-diff-est-1}\begin{split}
    &|\bq_{x_0,2^{-j_0}r}-\bq_{x_0,2^{-(j_0+1)}r}|\\
    &\qquad\le \begin{multlined}[t]C\psi(x_0,2^{-(j_0+1)}r)+C\psi(x_0,2^{-j_0}r)+C\psi(\bar x_0,8r)\\
    +C\|\D u\|_{L^\infty(B_{18r}(x_0)\cap B_4^+)}\tilde\eta_A^{1,\mu}(2^{4-j_0}r)+C\tilde\eta_\bg^{1,\mu}(2^{4-j_0}r).
    \end{multlined}\end{split}
\end{align}
We can argue as in \eqref{eq:psi-bound-1} to get
$$
\psi(\bar x_0,8r)\le C\dashint_{B_{10r}(x_0)\cap B_4^+}|\D u|d\mu+\|\bg\|_\infty,
$$
which combined with \eqref{eq:q-diff-est-1} implies
\begin{align}\label{eq:q-diff-est-j}\begin{split}
    &|\bq_{x_0,2^{-j_0}r}-\bq_{x_0,2^{-(j_0+1)}r}|\\
    &\le \begin{multlined}[t]C\psi(x_0,2^{-(j_0+1)}r)+C\psi(x_0,2^{-j_0}r)+C\dashint_{B_{10r}(x_0)\cap B_4^+}|\D u|d\mu\\
    +C\|\D u\|_{L^\infty(B_{18r}(x_0)\cap B_4^+)}\tilde\eta_A^{1,\mu}(2^{4-j_0}r)+C\tilde\eta_\bg^{1,\mu}(2^{4-j_0}r)+\|\bg\|_\infty.
    \end{multlined}\end{split}
\end{align}
In addition, by using that $\hat\eta_\bullet^{1,\mu}$ satisfies \eqref{eq:alm-mon}, one can easily show that
$$
\tilde\eta_\bullet^{1,\mu}(2^{4-j_0}r)\le C\int_0^{r}\frac{\hat\eta_\bullet^{1,\mu}(t)}tdt.
$$
Combining this with \eqref{eq:q-diff-est-sum}, \eqref{eq:psi-est-sum} and \eqref{eq:q-diff-est-j} and taking $k\to \infty$ yield
\begin{align*}
    &|\mean{\D_{x'}u(x_0),U(x_0)}-\bq_{x_0,r}|\\
    &\qquad\le \begin{multlined}[t]C\dashint_{B_{10r}(x_0)\cap B_4^+}|\D u|d\mu+C\|\bg\|_\infty\\
    + C\|\D u\|_{L^\infty(B_{18r}(x_0)\cap B_4^+)}\int_0^r\frac{\hat\eta_A^{1,\mu}(t)}tdt+C\int_0^r\frac{\hat\eta_\bg^{1,\mu}(t)}tdt.\end{multlined}
\end{align*}
On the other hand, we can obtain the following estimate by arguing as in Case 1
$$
|\bq_{x_0,r}|\le C\dashint_{B_r(x_0)\cap B_4^+}|\D u|d\mu+C\|\bg\|_\infty.
$$
The previous two estimates imply \eqref{eq:gradient-infty-bound}.

\medskip\noindent\emph{Step 2.} We are now ready to prove \eqref{eq:gradient-unif-est}. For $k\in \mathbb{N}$, we denote $s_k:=3-2^{1-k}$, so that $s_{k+1}-s_k=2^{-k}$, $s_1=2$ and $s_k\nearrow 3$. We note that for every $x_0\in B_{s_k}^+$ and $r= 2^{-k-5}$, $B_{18r}(x_0)\cap B_4^+\subset B_{s_{k+1}}^+$. For $C>0$ as in \eqref{eq:gradient-infty-bound} and $\al^+=\max\{\al,0\}$, we fix $0<r_0<1/4$ small so that
$$
C\int_0^{r_0}\frac{\hat\eta_{A}^{1,\mu}(t)}tdt<3^{-(n+\al^+)},
$$
and take $k_0\in \mathbb{N}$ such that $2^{-k_0-5}<r_0$. It is easily seen that $\mu(B_r(x_0)\cap B_4^+)\ge c(n,\al)r^{n+\al^+}$ whenever $x_0\in B_3^+$ and $0<r<1$. Then, we have by \eqref{eq:gradient-infty-bound} that for every $k\ge k_0$
\begin{align*}
    \|\D u\|_{L^\infty(B_{s_k}^+)}&\le C2^{k(n+\al^+)}\int_{B_4^+}|\D u|d\mu+C\|\bg\|_\infty+3^{-(n+\al^+)}\|\D u\|_{L^\infty(B_{s_{k+1}}^+)}\\
    &\quad +C\int_0^1\frac{\hat\eta_\bg^{1,\mu}(t)}tdt.
\end{align*}
We multiply this by $3^{-k(n+\al^+)}$ and take summation over $k\ge k_0$ to get
\begin{align*}
    \sum_{k=k_0}^{+\infty} 3^{-k(n+\al^+)}\|\D u\|_{L^\infty(B^+_{s_k})}&\le 
    C\int_{B_4^+}|\D u|d\mu+C\int_0^1\frac{\hat\eta_\bg^{1,\mu}(t)}tdt+C\|\bg\|_{L^\infty(B_4^+)}\\
    &\quad +\sum_{k=k_0}^{+\infty} 3^{-(k+1)(n+\al^+)}\|\D u\|_{L^\infty(B_{s_{k+1}}^+)}.
\end{align*}
Due to our assumption $u\in C^1(\overline{B_3^+})$, the sum $\sum_{k=k_0}^{+\infty} 3^{-k(n+\al^+)}\|\D u\|_{L^\infty(B_{s_k}^+)}$ converges, and hence \eqref{eq:gradient-unif-est} follows.
\end{proof}

To proceed, given $0<\be<1$, we consider a modulus of continuity $\omega:[0,1)\to[0,+\infty)$ defined by
\begin{align}
    \label{eq:mod-of-conti}
    \begin{split}
        \omega(r)
        &=\left(\int_{B_4^+}|\D u|d\mu+\|\bg\|_{L^\infty(B_4^+)}\right) r^{\be}+\int_0^r\frac{\hat\eta_\bg^{1,\mu}(t)}tdt\\
        &\qquad+\left(\int_{B_4^+}|\D u|d\mu+\|\bg\|_{L^\infty(B_4^+)}+\int_0^1\frac{\hat\eta_\bg^{1,\mu}(t)}tdt \right)\int_0^r\frac{\hat\eta_A^{1,\mu}(t)}tdt.
    \end{split}
\end{align}

\begin{lemma}
    For any $x_0\in B_1^+$ and $0<r<1/18$, we have
    \begin{align}\label{eq:u-q-diff}\begin{split}
        &|\mean{\D_{x'}u(x_0),U(x_0)}-\bq_{x_0,r}|\le C\omega(r)
    \end{split}\end{align}
for some constant $C>0$ depending only on $n,\la,\al,p,\be$.
\end{lemma}

\begin{proof}
Recall the identity $\lim_{k\to +\infty}\bq_{x_0,2^{-k}r}=\mean{\D_{x'}u(x_0),U(x_0)}$, and, as before, consider two cases: either $0<r\le d/2$ or $d/2<r<1/18$.

\medskip\noindent\emph{Case 1.} If $0<r\le d/2$, then we have
\begin{align*}
    |\mean{\D_{x'}u(x_0),U(x_0)}-\bq_{x_0,r}|\le \sum_{j=0}^{+\infty}|\bq_{x_0,2^{-j}r}-\bq_{x_0,2^{-(j+1)}r}|\le C\sum_{j=0}^{+\infty}\psi(x_0,2^{-j}r).
\end{align*}
By using \eqref{eq:psi-decay-est}, we obtain
\begin{align}
    \label{eq:psi-sum-est}
        \sum_{j=0}^{+\infty}\psi(x_0,2^{-j}r)
        \le \begin{multlined}[t]C\left(\int_{B_4^+}|\D u|d\mu+\|\bg\|_{L^\infty(B_4^+)}\right)r^{\be}\\
        +C\|\D u\|_{L^\infty(B_2^+)}\int_0^r\frac{\hat\eta_A^{1,\mu}(t)}tdt+C\int_0^r\frac{\hat\eta_\bg^{1,\mu}(t)}tdt,\end{multlined}
\end{align}
and hence \eqref{eq:u-q-diff} follows from Lemma~\ref{lem:gradient-unif-est}.

\medskip\noindent\emph{Case 2.} Suppose $d/2<r<1/18$. Take $j_0\ge0$ such that $2^{-(j_0+1)}r\le d/2<2^{-j_0}r$. By first sending $k\to +\infty$ in \eqref{eq:q-diff-est-sum} and then applying \eqref{eq:q-diff-est-1} and \eqref{eq:deg-phi-est-bdry} sequentially, we get
\begin{align*}
    &|\mean{\D_{x'}u(x_0),U(x_0)}-\bq_{x_0,r}|\\
    &\le C\sum_{j=0}^{+\infty}\psi(x_0,2^{-j}r)+|\bq_{x_0,2^{-j_0}r}-\bq_{x_0,2^{-(j_0+1)}r}|\\
    &\le C\sum_{j=0}^{+\infty}\psi(x_0,2^{-j}r)+C\vp(\bar x_0,8r)+C\|\D u\|_{L^\infty(B_2^+)}\tilde\eta_A^{1,\mu}(2^{4-j_0}r)+C\tilde\eta_\bg^{1,\mu}(2^{4-j_0}r)\\
    &\le C\sum_{j=0}^{+\infty}\psi(x_0,2^{-j}r)+Cr^{\be}\int_{B_4^+}|\D u|d\mu+C\left(\tilde\eta_\bg^{1,\mu}(16r)+\tilde\eta_\bg^{1,\mu}(2^{4-j_0}r)\right)\\
    &\quad +C\|\D u\|_{L^\infty(B_2^+)}\left(\tilde\eta_A^{1,\mu}(16r)+\tilde\eta_A^{1,\mu}(2^{4-j_0}r)\right).
\end{align*}
This, together with \eqref{eq:gradient-unif-est} and \eqref{eq:psi-sum-est}, implies \eqref{eq:u-q-diff}.
\end{proof}

We are now ready to prove Theorem~\ref{thm:deg-pde}.

\begin{proof}[Proof of Theorem~\ref{thm:deg-pde}]
Our goal is to show that for any $x_0,y_0\in B_1^+$ with $r:=|x_0-y_0|>0$,
\begin{align}
    \label{eq:gradient-mod-conti}
    |\D u(x_0)-\D u(y_0)|\le C\omega(r),
\end{align}
where $C>0$ is a constant, depending only on $n,\la,\al,p,\be$, and $\omega$ is a modulus of continuity as in \eqref{eq:mod-of-conti}.

If $r\ge 1/18$, then we can simply use $|\D u(x_0)-\D u(y_0)|\le 2\|\D u\|_{L^\infty(B_1^+)}\le 36\|\D u\|_{L^\infty(B_1^+)}r$ and apply \eqref{eq:gradient-unif-est} to get \eqref{eq:gradient-mod-conti}. Thus, we may assume $0<r<1/18$. We consider two cases either $r\ge (x_0)_n/8$ or $r<(x_0)_n/8$.

\medskip\noindent\emph{Case 1.} Suppose $r\ge (x_0)_n/8$. Since the monotonicity of $t\longmapsto \frac{\hat\eta_\bullet^{1,\mu}(t)}{t^{\be}}$ implies that of $t
\longmapsto \frac{\omega(t)}{t^{\be}}$, we have $\omega(5r)\le C\omega(r)$. This, along with \eqref{eq:u-q-diff}, gives
\begin{align}
    \label{eq:u-gradient-conti-est}
    \begin{split}&|\mean{\D_{x'}u(x_0),U(x_0)}-\mean{\D_{x'}u(y_0),U(y_0)}|\\
    &\qquad\le \begin{multlined}[t]|\mean{\D_{x'}u(x_0),U(x_0)}-\bq_{x_0,5r}|+|\mean{\D_{x'}u(y_0)-U(y_0)}-\bq_{y_0,5r}|\\+|\bq_{x_0,5r}-\bq_{y_0,5r}|\end{multlined}\\
    &\qquad\le C\omega(r)+|\bq_{x_0,5r}-\bq_{y_0,5r}|.
\end{split}\end{align}
To treat the last term $|\bq_{x_0,5r}-\bq_{y_0,5r}|$, we observe that the assumption $r\ge (x_0)_n/8$ gives $\frac{(y_0)_n}2\le\frac{(x_0)_n+r}2<5r$. We then take average of the inequality
\begin{multline*}
    |\bq_{x_0,5r}-\bq_{y_0,5r}|^p\le |\bq_{x_0,5r}-\mean{\D_{x'}u(x),U(x)}|^p+|\bq_{y_0,5r}-\mean{\D_{x'}u(x), U(x)}|^p
\end{multline*}
over $x\in B_r(x_0)\cap B_4^+$ and take the $p$-th root to obtain
\begin{align*}
    |\bq_{x_0,5r}-\bq_{y_0,5r}|
    &\le \begin{multlined}[t]C\left(\dashint_{B_{5r}(x_0)\cap B_4^+}|\mean{\D_{x'}u,U}-\bq_{x_0,5r}|^pd\mu\right)^{1/p}\\
    +C\left(\dashint_{B_{5r}(y_0)\cap B_4^+}|\mean{\D_{x'}u,U}-\bq_{y_0,5r}|^pd\mu\right)^{1/p}\end{multlined}\\
    &\le C\psi(x_0,5r)+C\psi(y_0,5r)\\
    &\le C\omega(r),
\end{align*}
where we applied \eqref{eq:psi-decay-est} in the last step. By combining this with \eqref{eq:u-gradient-conti-est} and using the definition of $U$, we get \eqref{eq:gradient-mod-conti}.

\medskip\noindent\emph{Case 2.} Now we suppose $r<(x_0)_n/8$. We argue as in \eqref{eq:u-gradient-conti-est} to get
\begin{align*}
    |\mean{\D_{x'}u(x_0),U(x_0)}-\mean{\D_{x'}u(y_0),U(y_0)}|\le C\omega(r)+|\bq_{x_0,2r}-\bq_{y_0,2r}|.
\end{align*}
To estimate the last term, we note that $(y_0)_n>(x_0)_n-r>7r$, which implies $2r<\min\{(x_0)_n/2,(y_0)_n/2\}$. We then use the trivial inequality
\begin{align*}
    |\bq_{x_0,2r}-\bq_{y_0,2r}|^p&\le |\bq_{x_0,2r}-\mean{\D_{x'}u(x), U^{x_0}(x)}|^p+|\bq_{y_0,2r}-\mean{\D_{x'}u(x), U^{y_0}(x)}|^p\\
    &\quad +|U^{x_0}(x)-U^{y_0}(x)|^p
\end{align*}
for $x\in B_r(x_0)$ to deduce
    \begin{align*}
    &|\bq_{x_0,2r}-\bq_{y_0,2r}|\\
    &\le C\left(\dashint_{B_{2r}(x_0)}|\mean{\D_{x'}u,U^{x_0}}-\bq_{x_0,2r}|^pd\mu\right)^{1/p}\\
    &\quad +C\left(\dashint_{B_{2r}(y_0)}|\mean{\D_{x'}u,U^{y_0}}-\bq_{y_0,2r}|^pd\mu\right)^{1/p}+C\left(\dashint_{B_r(x_0)}|U^{x_0}-U^{y_0}|^pd\mu\right)^{1/p}\\
    &\le C\psi(x_0,2r)+C\psi(y_0,2r)+C\left(\dashint_{B_r(x_0)}|U^{x_0}-U^{y_0}|^pd\mu\right)^{1/p}\\
    &\le C\omega(r)+C\left(\dashint_{B_r(x_0)}|U^{x_0}-U^{y_0}|^pd\mu\right)^{1/p}.
\end{align*}
Thus, it is sufficient to show that $\left(\dashint_{B_r(x_0)}|U^{x_0}-U^{y_0}|^pd\mu\right)^{1/p}\le C\omega(r)$. To this end, we observe that in $B_r(x_0)$,
$$
|U^{x_0}-U^{y_0}|=\left|1-\left(\frac{(x_0)_n}{(y_0)_n}\right)^\al\right| |U^{x_0}|\le \frac{Cr}{(x_0)_n}|U^{x_0}|,
$$
which gives
\begin{align}
    \label{eq:U-diff-est}
    \left(\dashint_{B_r(x_0)}|U^{x_0}-U^{y_0}|^pd\mu\right)^{1/p}\le \frac{Cr}{(x_0)_n}\left(\dashint_{B_r(x_0)}|U^{x_0}|^pd\mu\right)^{1/p}.
\end{align}
To estimate the right-hand side, we denote $d:=(x_0)_n$ and take $k\in\mathbb{N}$ such that $d/8<2^kr\le d/4$. For each $0\le j\le k-1$, notice that $2^{j+1}r\le {\color{blue}d}/4$ and let $\bq_{x_0,2^{j+1}r}=(\bq_{x_0,2^{j+1}r}',(\bq_{x_0,2^{j+1}r})_n)\in \R^n$ be as in \eqref{eq:psi-inf}. Then
\begin{align*}
    &\dashint_{B_{2^jr}(x_0)}|U^{x_0}|^pd\mu\\
    &=\dashint_{B_{2^jr}(x_0)}\dashint_{B_{2^{j+1}r}(x_0)}|U^{x_0}(y)+(U^{x_0}(x)-U^{x_0}(y))|^pd\mu(y)d\mu(x)\\
    &\le \dashint_{B_{2^jr}(x_0)}\dashint_{B_{2^{j+1}r}(x_0)}|U^{x_0}(y)|^pd\mu(y)d\mu(x)\\
    &\quad +C\dashint_{B_{2^{j+1}r}(x_0)}\dashint_{B_{2^{j+1}r}(x_0)}|U^{x_0}(x)-U^{x_0}(y)|^pd\mu(y)d\mu(x)\\
    &\le \dashint_{B_{2^{j+1}r}(x_0)}|U^{x_0}|^pd\mu+C\dashint_{B_{2^{j+1}r}(x_0)}|U^{x_0}-(\bq_{x_0,2^{j+1}r})_n|^pd\mu\\
    &\le \dashint_{B_{2^{j+1}r}(x_0)}|U^{x_0}|^pd\mu+C\left(\omega(2^{j+1}r)\right)^p,
\end{align*}
where we applied Lemma~\ref{lem:psi-decay} in the last step. By summing up the previous estimate over $0\le j\le k-1$, we attain
\begin{align*}
\dashint_{B_r(x_0)}|U^{x_0}|^pd\mu\le\dashint_{B_{2^kr}(x_0)}|U^{x_0}|^pd\mu+C\sum_{j=1}^k\left(\omega(2^jr)\right)^p,
\end{align*}
and hence by applying H\"older's inequality
$$
\left(\dashint_{B_r(x_0)}|U^{x_0}|^pd\mu\right)^{1/p}\le C\left(\dashint_{B_{2^kr}(x_0)}|U^{x_0}|^pd\mu\right)^{1/p}+Ck^{\frac{1-p}p}\sum_{j=1}^k\omega(2^jr).
$$
To bound the first term in the right-hand side, we denote $\bar x_0:=(x_0',0)\in B_1'$ and exploit Lemma~\ref{lem:deg-bdry} and the monotonicity of $t\longmapsto\frac{\omega(t)}{t^{\be}}$ to get
\begin{align*}
    &\left(\dashint_{B_{2^kr}(x_0)}|U^{x_0}|^pd\mu\right)^{1/p}\\
    &\qquad\le C\left(\dashint_{B_{d/4}(x_0)}|U^{x_0}|^pd\mu\right)^{1/p}\le C\left(\dashint_{B_{d/4}(x_0)}|\mean{A\D u+\bg,\vec{e}_n}|^pd\mu\right)^{1/p}\\
    &\qquad\le C\left(\dashint_{B_{2d}^+(\bar x_0)}|\mean{A\D u+\bg,\vec{e}_n}|^pd\mu\right)^{1/p}\le C\omega({\color{blue}2}d)\le C\omega(2^kr).
\end{align*}
Combining the preceding two estimates gives
\begin{align*}
    \left(\dashint_{B_r(x_0)}|U^{x_0}|^pd\mu\right)^{1/p}&\le Ck^{\frac{1-p}p}\sum_{j=1}^k\omega(2^jr)\le Ck^{\frac{1-p}p}\sum_{j=1}^k(2^{j\be}\omega(r))\\
    &\le Ck^{\frac{1-p}p}2^{k\be}\omega(r)\le C2^k\omega(r)\le \frac{Cd}r\omega(r).
\end{align*}
This, together with \eqref{eq:U-diff-est}, concludes
$$
\left(\dashint_{B_r(x_0)}|U^{x_0}-U^{y_0}|^pd\mu\right)^{1/p}\le C\omega(r).
$$
This completes the proof.
\end{proof}

\subsection{\texorpdfstring{$C^k$}{} estimates}

In this subsection, we establish Theorem~\ref{thm:deg-pde-HO} by employing the case $k=1$ (Theorem~\ref{thm:deg-pde}) and the induction argument. 

In the statement of Theorem~\ref{thm:deg-pde-HO}, we assume $D_{x'}^{k-1}\bg\in C^{0,\omega}_{1,\mu}(B_1^+)$ (same for $A$), which is a weaker requirement than $D^{k-1}\bg\in C^{0,\omega}_{1,\mu}(B_1^+)$. While this may not be a significant improvement, this formulation of the theorem is crucial for facilitating the induction argument.

In addition, one can infer that the modulus $\sigma(r)$ of the latter result in Theorem~\ref{thm:deg-pde-HO} is comparable with $\int_0^r\frac{\hat\omega(s)}sds+r^\be$ for any chosen $\be\in(0,1)$. This implies that our result recovers the classical $C^{k,\gamma}$ estimates in \cite{TerTorVit22} when the data belong to the H\"older space.

\begin{proof}[Proof of Theorem~\ref{thm:deg-pde-HO}]
As \eqref{Neumann} follows by taking $\rho\to0$ in \eqref{eq:deg-phi-est-bdry}, it is sufficient to establish the $C^k$ estimates of the solution $u$. 

We argue by induction on $k\in\mathbb N$. The case $k=1$ follows from Theorem~\ref{thm:deg-pde}. We now assume the theorem is true for $k$ and prove it for $k+1$. We observe that the tangential derivatives $u_i:=\partial_iu$ for $i=1,\ldots,n-1$, solve
$$
-\div(x_n^\al A\D u_i)=\div(x_n^\al(\partial_i\bg+\partial_iA\D u))\quad\text{in }B_1^+,
$$
with the conormal boundary condition on $B_1'$
$$
\lim_{x_n\to0^+}x_n^\al\mean{A\D u_i+\partial_i\bg+\partial_iA\nabla u,\vec{e}_n}=0.
$$
As $A, \bg\in C^{k}\subset C^{k-1,1}$, the Schauder estimates in \cite{TerTorVit22}*{Theorem 2.6} give $u\in C^{k,\be}$ for any $0<\be<1$. This implies that the field $\overline\bg :=\partial_i\bg+\partial_iA\D u$ belongs to $C^{k-1}$ with $D_{x'}^{k-1} \overline\bg \in C_{1,\mu}^{0,\omega}$. Thus we have by the inductive hypothesis
\begin{align}\label{eq:tan-deriv-reg}
u_i\in C^{k}_\loc(B_1^+\cup B_1'),\quad \forall \ i=1,\ldots,n-1.
\end{align}
For the $C^{k+1}$-regularity of $u$, it is sufficient to show $u_{nn}:=\partial_{nn}u\in C^{k-1}_\loc(B_1^+\cup B_1')$. To this aim, we rewrite the equation \eqref{eq:deg/sing-pde} as
$$
-\div(A\D u)=\frac{\al\mean{A\D u+\bg,\vec{e}_n}}{x_n}+\div\bg.
$$
It follows that for $U=\mean{A\D u+\bg,\vec{e}_n}$,
$$
x_n^{-\al}\partial_n(x_n^\al U)=h:=-\div\bg+\partial_ng_n-\sum_{i=1}^{n-1}\partial_i(\mean{A\D u,\vec{e}_i}).
$$
Notice that $h\in C^{k-1}$. Since $x_n^\al U=0$ on $B_1'$, this equation gives
$$
U(x',x_n)=\frac1{x_n^\al}\int_0^{x_n}t^\al h(x',t)dt,
$$
and thus
\begin{align*}
    \partial_nU(x',x_n)&=h(x',x_n)-\frac\al{x_n^{\al+1}}\int_0^{x_n}t^\al h(x',t)dt=h(x',x_n)-\al\int_0^1s^\al h(x',sx_n)ds.
\end{align*}
Therefore, $\partial_n U$ belongs to $C^{k-1}$, with its modulus of continuity dominated by that of $h$. The definition of $U$, along with \eqref{eq:tan-deriv-reg}, readily implies $u_{nn}\in C^{k-1}$.
\end{proof}


\section{The Hopf-Oleinik boundary point principle}\label{sec:Hopf}

In this short section, we discuss the validity of the boundary point principle, which holds true under the same conditions stated in \cite{RenSirSoa22} but with a weaker requirement on the boundary regularity of the domain $\Omega$; that is, $\Omega\in C^{1,1-\mathrm{DMO}}$. This is just a remark once one observes that the flattening of such a boundary \eqref{standard_diffeo} leads to the same situation as in \cite{RenSirSoa22}.

Recall that \cite{ApuNaz16} provides counterexamples to the boundary point principle, where the boundaries of the domains are parametrized by convex functions which do not satisfy satisfies the interior $C^{1,\mathrm{Dini}}$-paraboloid condition. However this kind of counterexamples fails the $C^{1,\mathrm{DMO}}$ regularity since one can prove the following fact: if $\Omega$ is convex and $C^{1,1-\mathrm{DMO}}$, then $\Omega$ satisfies the interior $C^{1,\mathrm{Dini}}$-paraboloid condition \cite{ApuNaz19}; that is, the following result holds true.
\begin{proposition}\label{convexDMO}
    Let $\varphi$ be a convex function in $B_1$ such that $\varphi(0)=0$ and $\nabla\varphi(0)=0$. If $\nabla\varphi$ is of $L^1$-DMO in $B_1$, then
    $$\omega(r)=\sup_{|x|\leq r} \frac{\varphi(x)}{|x|}$$
    is a Dini function.
\end{proposition}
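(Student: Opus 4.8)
The plan is to bound $\omega(r)$ by a \emph{weighted radial average} of $\nabla\varphi-\nabla\varphi(0)$ centred at the origin, to which the $L^1$-DMO hypothesis applies; the essential point is that convexity upgrades the too-weak pointwise bound $\varphi(x)/|x|\le|\nabla\varphi(x)-\nabla\varphi(0)|$ (which only yields the modulus of continuity of $\nabla\varphi$, hence the possibly non-Dini function $\int_0^r\eta(s)s^{-1}\,ds$) into a genuine average. Throughout, write $\eta(r):=\eta^{1}_{\nabla\varphi}(r)$ for the $L^1(dx)$-DMO modulus of $\nabla\varphi$ in $B_1$, so $\int_0^1\eta(r)r^{-1}\,dr<\infty$; I will also use the standard one-sided quasi-monotonicity $\eta(r)\le C_n\,\eta(2r)$ of DMO moduli (coming from enlarging the averaging ball together with the doubling of the Lebesgue measure), which gives $\sum_{j\ge1}\eta(2^{-j})\le C_n\int_0^1\eta(r)r^{-1}\,dr<\infty$. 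Note first that, since $\varphi$ is convex with $\varphi(0)=0$ and $\nabla\varphi(0)=0$, the origin is a minimiser, so $\varphi\ge0$ on $B_1$; moreover $\nabla\varphi$, being $L^1$-DMO, is uniformly continuous and bounded on $B_1$ (cf.\ the last inclusion in \eqref{chaininclusion}), so $\varphi\in C^1(B_1)$ is Lipschitz. In particular $\omega(r)\le\|\nabla\varphi\|_{L^\infty(B_1)}<\infty$ for all $r$, so only the behaviour of $\omega$ near $r=0$ matters.

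\emph{Step 1: reduction to a one-dimensional quantity.} I introduce the spherical mean $M(s):=\dashint_{\partial B_s}\varphi\,d\sigma$ for $0<s<1$. Then $M\in C^1((0,1))$, $M'(s)=\dashint_{\partial B_s}\mean{\nabla\varphi,\nu}\,d\sigma$, and $M(0^+)=\varphi(0)=0$; since $s\mapsto\varphi(s\theta)$ is convex for each fixed $\theta\in S^{n-1}$, the average $M$ is convex on $[0,1)$, and being also nonnegative with $M(0)=0$ it is nondecreasing, with $s\mapsto M(s)/s$ nondecreasing as well. Applying Jensen's inequality on $B_{|x|}(x)$ (whose barycentre is $x$), then enlarging to $B_{2|x|}(0)\supset B_{|x|}(x)$ and using $\varphi\ge0$ and the monotonicity of $M$, one gets for $0<|x|<\tfrac12$
\begin{equation*}
\varphi(x)\le\dashint_{B_{|x|}(x)}\varphi\le 2^{n}\dashint_{B_{2|x|}(0)}\varphi\le 2^{n}M(2|x|);
\end{equation*}
dividing by $|x|$ and using that $M(s)/s$ is nondecreasing yields $\omega(r)\le 2^{n+1}M(2r)/(2r)$ for $0<r<\tfrac12$. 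After the substitution $s=2r$ it therefore suffices to prove $\int_0^{\delta}M(s)\,s^{-2}\,ds<\infty$ for some $\delta\in(0,\tfrac12)$; and since $M(s)=\int_0^{s}M'$ with $M'\ge0$, Tonelli's theorem reduces this further to $\int_0^{\delta}M'(\rho)\,\rho^{-1}\,d\rho<\infty$.

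\emph{Step 2: the averaged estimate on $M'$.} This is the crux, and the place where the DMO hypothesis enters. Since $\dashint_{\partial B_t}\nu\,d\sigma=0$, for every constant vector $c$ one has $M'(t)=\dashint_{\partial B_t}\mean{\nabla\varphi-c,\nu}\,d\sigma\le |S^{n-1}|^{-1}t^{1-n}\int_{\partial B_t}|\nabla\varphi-c|\,d\sigma$. Integrating over $t\in[\rho,2\rho]$, converting the spherical integrals into a solid integral over the annulus $B_{2\rho}\setminus B_\rho\subset B_{2\rho}(0)$, and choosing $c=\mean{\nabla\varphi}_{B_{2\rho}(0)}$, one obtains for $\rho<\tfrac12$
\begin{equation*}
M(2\rho)-M(\rho)=\int_\rho^{2\rho}M'(t)\,dt\le\frac{1}{|S^{n-1}|\,\rho^{\,n-1}}\int_{B_{2\rho}(0)}\big|\nabla\varphi-\mean{\nabla\varphi}_{B_{2\rho}(0)}\big|\,dx\le C_n\,\rho\,\eta(2\rho).
\end{equation*}
A dyadic summation, using $\int_{2^{-j-1}}^{2^{-j}}\rho^{-1}M'(\rho)\,d\rho\le 2^{j+1}\big(M(2^{-j})-M(2^{-j-1})\big)$ together with the displayed estimate at $\rho=2^{-j-1}$, then gives $\int_0^{\delta}M'(\rho)\rho^{-1}\,d\rho\le C_n\sum_{j}\eta(2^{-j})<\infty$. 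Combining this with Step 1 (which bounds $\int_0^{\delta/2}\omega(r)r^{-1}\,dr$) and the trivial bound on $\int_{\delta/2}^{1}\omega(r)r^{-1}\,dr$ coming from $\omega(r)\le\|\nabla\varphi\|_{L^\infty(B_1)}$, we conclude $\int_0^1\omega(r)r^{-1}\,dr<\infty$, i.e.\ $\omega$ is a Dini function.

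\emph{Where the difficulty lies.} The only delicate point is that $M'(\rho)$ is a \emph{spherical} average of $\nabla\varphi$ and hence is \emph{not} controlled by the $L^1(dx)$-DMO modulus of $\nabla\varphi$, which bounds only solid-ball averages — so a pointwise-in-$\rho$ estimate is impossible. The remedy is the extra averaging over the annulus $B_{2\rho}\setminus B_\rho$ in Step 2: convexity forces $M'\ge0$, so no cancellation is lost when passing between $\int_\rho^{2\rho}M'(t)\,dt$ and values of the monotone function $M$, and after this averaging the DMO bound applies to an honest solid average. Convexity is equally indispensable in Step 1, through Jensen's inequality and the monotonicity of $s\mapsto M(s)/s$; this is exactly why the statement fails for the merely $C^1$ (non-convex) boundaries of \cite{ApuNaz16}.
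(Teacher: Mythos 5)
Your proof is correct, but it takes a genuinely different route from the paper's. The paper fixes a direction, uses the monotonicity of convex difference quotients along rays to bound $\varphi(s\vec e_n)/s$ by an average of $\varphi(\cdot,r)$ over a thin $(n-1)$-dimensional disk at height $r$, writes each value of $\varphi$ as a line integral of the directional derivative $D_{\vec v}\varphi$ along the ray through the origin, and exploits convexity through the sign of $\la\mapsto D_{\vec v}\varphi(\la y,\la r)$ to dominate $|D_{\vec v}\varphi|$ by the oscillation $|\nabla\varphi(\la y,\la r)-\nabla\varphi(-\la y,-\la r)|$; averaging over a solid cone $D_r$ of measure $\simeq r^n$ then lands directly on the $L^1$-DMO modulus and yields the stronger \emph{pointwise} bound $\omega(r)\le C\eta^{1}_{\nabla\varphi}(r)$. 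You instead work with the spherical mean $M(s)$, use the sub-mean-value (Jensen) inequality together with the monotonicity of $s\mapsto M(s)/s$ to reduce the claim to $\int_0^\delta M'(\rho)\rho^{-1}\,d\rho<\infty$, and control $M(2\rho)-M(\rho)$ by a solid average over $B_{2\rho}$ after inserting the mean of $\nabla\varphi$ via $\dashint_{\partial B_t}\nu\,d\sigma=0$. Both arguments rest on the same two ingredients — convexity to upgrade a pointwise or directional quantity to a solid average, and the vanishing of an average (of $\nu$ in your case, of the odd reflection in the paper's) to subtract $\mean{\nabla\varphi}$ — but your telescoping argument delivers only the Dini integrability $\int_0^1\omega(r)r^{-1}\,dr<\infty$, which is exactly what the statement asks, while the paper's delivers the quantitative comparison $\omega\le C\eta^{1}_{\nabla\varphi}$, which is sharper and occasionally more convenient. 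A minor merit of your version is that it avoids the (harmless but unstated) comparison between averages over the cone $D_r$ and averages over balls implicit in the paper's last display; a minor cost is the extra bookkeeping with the quasi-monotonicity of $\eta^{1}_{\nabla\varphi}$ in the dyadic summation.
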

\begin{proof}
    Let us consider the ball $B_r$ and select a direction, without loss of generality $\vec e_n$. Taking $0<s\leq r$, by convexity
    \begin{equation}\label{convex1}
    \frac{\varphi(s\vec e_n)}{s}\leq \frac{\varphi(r\vec e_n)}{r} \leq \frac{1}{r}\dashint_{B_r(r\vec e_n)\cap\{x_n=r\}}\varphi(y,r) \, dy,
    \end{equation}
    where $y=(y_1,\ldots,y_{n-1})$. Then, for any $y\in B_r(r\vec e_n)\cap\{x_n=r\}$ we have
    \begin{align*}
    \vp(y,r)&=\vp(\la y,\la r)|^1_0=\int_0^1(y,r)\cdot\D\vp(\la y,\la r)\,d\la=\sqrt{|y|^2+r^2}\int_0^1\D_{\vec v}\vp(\la y,\la r)\,d\la,
    \end{align*}
where $\vec v=\frac{(y,r)}{\sqrt{|y|^2+r^2}}$. 
    By the convexity of $\varphi$, $D_{\vec v}\varphi(\lambda y,\lambda r)$ is nonnegative whenever $\lambda>0$ and nonpositive whenever $\lambda<0$. Thus,
\begin{align*}
    \frac{\vp(y,r)}r&\le C\int_0^1|\D_{\vec v}\vp(\la y,\la r)|\,d\la\le C\int_0^1|\nabla_{\vec v}\varphi(\lambda y,\lambda r)-\nabla_{\vec v}\varphi(-\lambda y,-\lambda r)| \, d\lambda\\
    &\le C\int_0^1|\nabla\varphi(\lambda y,\lambda r)-\nabla\varphi(-\lambda y,-\lambda r)| \, d\lambda.
\end{align*}
     Then, considering \eqref{convex1}
     \begin{eqnarray*}
         \frac{\varphi(s\vec e_n)}{s}&\leq& \frac{1}{r}\dashint_{B_r(r\vec e_n)\cap\{x_n=r\}}\varphi(y,r) \, dy\\
         &\leq& C\dashint_{B_r(\vec e_n)\cap\{x_n=r\}}\int_0^1|\nabla\varphi(\lambda y,\lambda r)-\nabla\varphi(-\lambda y,-\lambda r)| \, dy \, d\lambda\\
         &\leq& C \dashint_{D_r}|\nabla\varphi(x)-\nabla\varphi(-x)| \, dx\\
         &\leq& C \dashint_{D_r}\dashint_{D_r}|\nabla\varphi(x)-\nabla\varphi(z)|+|\nabla\varphi(z)-\nabla\varphi(-x)| \, dx \, dz\\
         &\leq& C \eta_{\nabla\varphi}^1(r).
     \end{eqnarray*}
     The domain $D_r$ above stands for a portion of a cone
     $$D_r=\{\lambda y \ : \ \lambda\in(0,1), \, y\in B_r(\vec e_n)\cap\{x_n=r\}\}.$$
     The latter bound is uniform in the choice of the direction, then
\begin{equation*}\sup_{|x|\leq r} \frac{\varphi(x)}{|x|}\leq C\eta_{\nabla\varphi}^1(r).\qedhere
\end{equation*}
\end{proof}


\section{Higher-order boundary Harnack principle}\label{sec:BHP}
The Schauder type estimates derived in Section~\ref{3}, particularly the case $\al=2$, can be utilized to establish higher-order boundary Harnack principles, as shown in \cite{TerTorVit22} within the framework of H\"older condition. Recently, \cite{JeoVit23} demonstrated the applicability of this technique under the uniform Dini condition. However, as previously mentioned, the situation becomes more complicated within our DMO framework.

\subsection{Dini mean oscillation of \texorpdfstring{$u/x_n$}{}}
The purpose of this section is to establish the following result.
\begin{proposition}
    \label{prop:ratio-L1-DMO}
    Let $u\in H^1(B_1^+)$ be a weak solution of 
\begin{align}\label{flatUEeq}
    \begin{cases}
    -\div(A\D u)=\div\bg&\text{in }B_1^+,\\
    u=0&\text{on }B_1'.
    \end{cases}
\end{align}
If $A$ and $\bg$ are of $L^1(dx)$-DMO in $B_1^+$, then $u/x_n$ is of $L^1(x_ndx)$-DMO in $B^+_{1/2}$.
\end{proposition}

In the subsequent subsection, we will utilize Proposition~\ref{prop:ratio-L1-DMO} to obtain the higher regularity of $u/x_n$, Corollary~\ref{cor:HO-u-x_n-reg}, which will play a significant role in establishing the higher-order boundary Harnack principle. Notice that the equation \eqref{flatUEeq} differs from the one \eqref{eq:UE-Diri} in Corollary~\ref{cor:HO-u-x_n-reg}. We start with \eqref{flatUEeq} to facilitate the induction process in Proposition~\ref{prop:ratio-L1-DMO-k}.

In the remaining of this subsection, we fix
$$
d\mu=x_ndx
$$
and define
$$
\phi(x_0,r):=\dashint_{B_r(x_0)\cap B_1^+}\left|\frac{u}{x_n}-\lmean{\frac{u}{x_n}}_{B_r(x_0)\cap B_1^+}^\mu\right|d\mu,\quad x_0\in \overline{B_{1/2}^+},\, r>0.
$$
We first prove the following auxiliary results.

\begin{lemma}
    \label{lem:ratio-L1-DMO-bdry}
Let $u,A, \bg$ be as in Proposition~\ref{prop:ratio-L1-DMO}. Then for any $\bar x_0\in B'_{1/2}$ and $0<\rho<r\le 1/4$,
\begin{align}\label{eq:weight-phi-L1-est}
\phi(\bar x_0,\rho)\le C(\rho/r)^{1/2}\phi(\bar x_0,r)+C\|\D u\|_{L^\infty(B_1^+)}\tilde\eta_A^{1}(\rho)+C\tilde\eta_\bg^{1}(\rho),
\end{align}
where $C=C(n,\la)>0$ are constants, and $\tilde\eta_\bullet^{1}$ is a Dini function derived from $\eta^{1}_\bullet$.
\end{lemma}

\begin{proof}
We may assume without loss of generality that $\bar x_0=0$. We fix $r\in (0,1/2)$, and write for simplicity $\bar A=\mean{A}_{B_r^+}$ and $\bar \bg=\mean{\bg}_{B_r^+}$. Let $w\in W^{1,2}_0(B_r^+)$ be a solution of
$$
-\div(\bar A\D w)=\div((A-\bar A)\D u+(\bg-\bar\bg))\quad\text{in }B_r^+.
$$
By using the estimate of Green's functions, we have
\begin{align}\label{eq:sol-hom-diff-L1-est-1}\begin{split}
    \int_{B_r^+}|w(x)|\,dx&\le Cr\int_{B_r^+}|(A-\bar A)\D u+(\bg-\bar\bg)|\,dx\\
    &\le Cr^{n+1}\left(\|\D u\|_{L^\infty(B_1^+)}\eta_A^{1}(r)+\eta_\bg^{1}(r)\right).
\end{split}\end{align}
This estimate implies that for a small constant $\ka\in(0,1/2)$ to be chosen later
\begin{align}
    \label{eq:sol-hom-diff-L1-est-2}
    \begin{split}
        \dashint_{B_{\ka r}^+}\left|\frac{w}{x_n}-\lmean{\frac{w}{x_n}}_{B_{\ka r}^+}^\mu\right|d\mu&\le 2\dashint_{B_{\ka r}^+}\left|\frac{w}{x_n}\right|d\mu\le \frac{2\int_{B_r^+}|w(x)|dx}{\mu(B_{\ka r}^+)}\\
        &\le C\ka^{-(n+1)}\left(\|\D u\|_{L^\infty(B_1^+)}\eta_A^{1}(r)+\eta_\bg^{1}(r)\right).
    \end{split}
\end{align}
On the other hand, $v:=u-w$ solves
\begin{align*}
    \begin{cases}
        \div(\bar A\D v)=0&\text{in }B_r^+,\\
        v=u&\text{on }\partial B_r^+.
    \end{cases}
\end{align*}
Note that $v=u=0$ on $B'_r$. From  $v(x)=\int_0^{x_n}\partial_nv(x',t)dt=x_n\int_0^1\partial_nv(x',x_ns)ds$, we infer by using the boundary elliptic estimate
\begin{align*}
    \left[\frac{v}{x_n}\right]_{\text{Lip}(B^+_{r/2})}&\le [\D v]_{\text{Lip}(B_{r/2}^+)}=\|D^2v\|_{L^\infty(B^+_{r/2})}\le \frac{C}{r^2}\dashint_{B_r^+}|v(x)|dx\\
    &\le \frac{C}r\cdot\frac{\int_{B_r^+}\left|\frac{v(x)}{x_n}\right|x_n\,dx}{\mu(B_r^+)}=\frac{C}{r}\dashint_{B_r^+}\left|\frac{v}{x_n}\right|d\mu.
\end{align*}
For every constant $c\in \R$, as the function $\tilde v(x)=v(x)-cx_n$ is a solution of $\div(\bar A\D\tilde v)=0$ in $B_r^+$ with $\tilde v=v=0$ on $B'_r$, repeating the above process with $\tilde v$ in the place of $v$ yields
$$
\left[\frac{v}{x_n}\right]_{\text{Lip}(B^+_{r/2})}\le \frac{C}r\dashint_{B_r^+}\left|\frac{v}{x_n}-c\right|d\mu.
$$
This inequality implies by setting $c=\lmean{\frac{v}{x_n}}_{B_r^+}^\mu$,
\begin{align}\label{eq:hom-replace-est}
\begin{split}
    \dashint_{B_{\ka r}^+}\left|\frac{v}{x_n}-\lmean{\frac{v}{x_n}}_{B_{\ka r}^+}^\mu\right|d\mu
    &\le C\ka r\left[\frac{v}{x_n}\right]_{\text{Lip}(B^+_{r/2})}\\
    &\le C\ka\dashint_{B_r^+}\left|\frac{v}{x_n}-\lmean{\frac{v}{x_n}}_{B_r^+}^\mu\right|d\mu.
\end{split}\end{align}
This, together with \eqref{eq:sol-hom-diff-L1-est-1}, gives
\begin{align*}
    &\dashint_{B_{\ka r}^+}\left|\frac{v}{x_n}-\lmean{\frac{v}{x_n}}_{B^+_{\ka r}}^\mu\right|d\mu\\
    &\qquad\le C\ka\dashint_{B_r^+}\left|\frac{u}{x_n}-\lmean{\frac{u}{x_n}}_{B_r^+}^\mu\right|d\mu+C\ka\dashint_{B_r^+}\left|\frac{w}{x_n}-\lmean{\frac{w}{x_n}}_{B_r^+}^\mu\right|d\mu\\
    &\qquad\le  C\ka\dashint_{B_r^+}\left|\frac{u}{x_n}-\lmean{\frac{u}{x_n}}_{B_r^+}^\mu\right|d\mu+C\ka\dashint_{B_r^+}\left|\frac{w}{x_n}\right|d\mu\\
    &\qquad\le C\ka\dashint_{B_r^+}\left|\frac{u}{x_n}-\lmean{\frac{u}{x_n}}_{B_r^+}^\mu\right|d\mu+C\ka\left(\|\D u\|_{L^\infty(B_1^+)}\eta_A^{1}(r)+\eta_\bg^{1}(r)\right).
\end{align*}
By combining this estimate with \eqref{eq:sol-hom-diff-L1-est-2}, we  obtain that
\begin{align*}
    \phi(0,\ka r)&\le \dashint_{B_{\ka r}^+}\left|\frac{v}{x_n}-\lmean{\frac{v}{x_n}}_{B^+_{\ka r}}^\mu\right|d\mu+\dashint_{B_{\ka r}^+}\left|\frac{w}{x_n}-\lmean{\frac{w}{x_n}}_{B^+_{\ka r}}^\mu\right|d\mu\\
    &\le C\ka\phi(0,r)+C\ka^{-(n+1)}\left(\|\D u\|_{L^\infty(B_1^+)}\eta_A^{1}(r)+\eta_\bg^{1}(r)\right),
\end{align*}
where constants $C>0$ depend only on $n$ and $\la$. As before, we can choose $\ka$ small so that $C\ka\le \ka^{1/2}$ and use iteration argument to deduce \eqref{eq:weight-phi-L1-est}.
\end{proof}

\begin{lemma}
    \label{lem:ratio-L1-DMO-interior}
Let $u, A, \bg$ be as in Proposition~\ref{prop:ratio-L1-DMO}. For any $x_0=(x_0',(x_0)_n)\in B_{1/2}^+$ and $0<\rho<r<\frac12(x_0)_n$, the estimate \eqref{eq:weight-phi-L1-est} holds true.
\end{lemma}

\begin{proof}
We follow the argument in Lemma~\ref{lem:ratio-L1-DMO-bdry}. To begin with, we decompose $u=w+v$ in $B_r(x_0)\subset B_1^+$, where $w\in W^{1,2}_0(B_r(x_0))$ is a solution of
$$
-\div(\mean{A}_{B_r(x_0)}\D w)=\div((A-\mean{A}_{B_r(x_0)})\D u+(\bg-\mean{\bg}_{B_r(x_0)}))\quad\text{in }B_{r}(x_0),
$$
and $v$ is a solution to
\begin{align}\label{eq:hom-repl}
\div(\mean{A}_{B_r(x_0)}\D v)=0\quad\text{in }B_r(x_0)
\end{align}
with $v-u\in W_0^{1,2}(B_r(x_0))$. For $d:=(x_0)_n$, the condition $r<d/2$ implies that $d/2<x_n<3d/2$ whenever $x\in B_r(x_0)$, which yields for small $\ka\in (0,1/2)$
$$
\mu(B_r(x_0))\le Cr^nd \quad\text{and}\quad \mu(B_{\ka r}(x_0))\ge c(\ka r)^nd\ge c\ka^n r^{n+1}.
$$
By arguing as in Lemma~\ref{lem:ratio-L1-DMO-bdry}, we can get
$$
\int_{B_r(x_0)}|w(x)|dx\le Cr^{n+1}\left(\|\D u\|_{L^\infty(B_1^+)}\eta_A^1(r)+\eta_\bg^1(r)\right),
$$
which implies an analogue of \eqref{eq:sol-hom-diff-L1-est-2}:
\begin{align}
    \label{eq:w/x_n-L1-est}
    \begin{split}
        \dashint_{B_{\ka r}(x_0)}\left|\frac{w}{x_n}-\lmean{\frac{w}{x_n}}_{B_{\ka r}(x_0)}^\mu\right|d\mu
        &\le \frac{2\int_{B_r(x_0)}|w(x)|dx}{\mu(B_{\ka r}(x_0))}\\
        &\le C\ka^{-n}\left(\|\D u\|_{L^\infty(B_1^+)}\eta_A^{1}(r)+\eta_\bg^{1}(r)\right).
    \end{split}
\end{align}
Concerning $\frac{v}{x_n}$, we can exploit $L^\infty$-estimates for $v$ and $\D v$ to deduce
\begin{align*}
    \left[\frac{v}{x_n}\right]_{\text{Lip}(B_{r/2}(x_0))}&=\left[\D\left(\frac{v}{x_n}\right)\right]_{L^\infty(B_{r/2}(x_0))}\\
    &\le C\left(\frac{\|\D v\|_{L^\infty(B_{r/2}(x_0))}}{d}+\frac{\|v\|_{L^\infty(B_{r/2}(x_0))}}{d^2}\right)\\
    &\le \frac{C}{r^{n+1}d}\int_{B_r(x_0)}|v(x)|\,dx\le \frac{C}r\dashint_{B_r(x_0)}\left|\frac{v}{x_n}\right|d\mu.
\end{align*}
Replacing $v$ with $\tilde v(x):=v(x)-\lmean{\frac{v}{x_n}}_{B_r(x_0)}^\mu x_n$, which also satisfies \eqref{eq:hom-repl}, gives
$$
\left[\frac{v}{x_n}\right]_{\text{Lip}(B_{r/2}(x_0))}\le \frac{C}r\dashint_{B_r(x_0)}\left|\frac{v}{x_n}-\lmean{\frac{v}{x_n}}_{B_r(x_0)}^\mu\right|d\mu.
$$
Thus
\begin{align}
    \label{eq:v/x_n-est-L1}
    \dashint_{B_{\ka r}(x_0)}\left|\frac{v}{x_n}-\lmean{\frac{v}{x_n}}_{B_{\ka r}(x_0)}^\mu\right|d\mu\le C\ka\dashint_{B_r(x_0)}\left|\frac{v}{x_n}-\lmean{\frac{v}{x_n}}_{B_r(x_0)}^\mu\right|d\mu.
\end{align}
This is an analogue of \eqref{eq:hom-replace-est}. As we have seen in the proof of Lemma~\ref{lem:ratio-L1-DMO-bdry}, the estimates \eqref{eq:w/x_n-L1-est} and \eqref{eq:v/x_n-est-L1} imply
$$
\phi(x_0,\ka r)\le C\ka\phi(x_0,r)+C\ka^{-n}\left(\|\D u\|_{L^\infty(B_1^+)}\eta_A^{1}(r)+\eta_\bg^{1}(r)\right).
$$
This concludes the lemma by choosing $\ka$ sufficiently small and using the iteration argument.
\end{proof}

We now provide the proof of Proposition~\ref{prop:ratio-L1-DMO} with the help of Lemmas~\ref{lem:ratio-L1-DMO-bdry} and \ref{lem:ratio-L1-DMO-interior}.

\begin{proof}[Proof of Proposition~\ref{prop:ratio-L1-DMO}]
We consider
$$
\omega(r):=\|\D u\|_{L^\infty(B_1^+)}\sup_{s\in [r,1]}\left[(r/s)^{1/2}\tilde\eta_A^{1}(s)\right]+\sup_{s\in [r,1]}\left[(r/s)^{1/2}\tilde\eta_\bg^{1}(s)\right],\quad 0<r<1.
$$
Note that $\omega(r)\ge\|\D u\|_{L^\infty(B_1^+)}\tilde\eta_A^{1}(r)+\tilde\eta_\bg^{1}(r)$ and  $r\longmapsto\frac{\omega(r)}{r^{1/2}}$ in nonincreasing. We claim that for any $x_0\in B_{1/2}^+$ and $0<\rho<r\le 1/4$
\begin{align}
    \label{eq:phi-est}
    \phi(x_0,\rho)\le C(\rho/r)^{1/2}\phi(x_0,r)+C\omega(\rho)
\end{align}
for some constant $C=C(n,\la)>0$. As $\frac{u}{x_n}$ is bounded, \eqref{eq:phi-est} readily implies Proposition~\ref{prop:ratio-L1-DMO} by taking $r=1/4$. Before we prove \eqref{eq:phi-est}, we observe that if $B_s(z_0)\subset B_t(z_1)$ and $\mu(B_s(z_0)\cap B_1^+)\ge c_0\mu(B_t(z_1)\cap B_1^+)$ for some $0<c_0<1$, then $\phi(z_0,s)\le 2c_0^{-1}\phi(z_1,t)$. This will be used multiple times in the proof, and follows from the following computation:
\begin{align*}
    &\dashint_{B_s(z_0)}\left|\frac{u}{x_n}-\lmean{\frac{u}{x_n}}^\mu_{B_s(z_0)}\right|d\mu\\
    &\qquad \le \dashint_{B_s(z_0)}\left|\frac{u}{x_n}-\lmean{\frac{u}{x_n}}^\mu_{B_t(z_1)}\right|d\mu+\left|\lmean{\frac{u}{x_n}}^\mu_{B_s(z_0)}-\lmean{\frac{u}{x_n}}_{B_t(z_1)}^\mu\right|\\
    &\qquad\le 2\dashint_{B_s(z_0)}\left|\frac{u}{x_n}-\lmean{\frac{u}{x_n}}^\mu_{B_t(z_1)}\right|d\mu\le \frac{2}{c_0}\dashint_{B_t(z_1)}\left|\frac{u}{x_n}-\lmean{\frac{u}{x_n}}^\mu_{B_t(z_1)}\right|d\mu.
\end{align*}

To derive \eqref{eq:phi-est}, we fix a point $x_0=(x_0',(x_0)_n)\in B_{1/2}^+$, and write $d:=(x_0)_n$ and $\bar x_0:=(x_0',0)\in B_{1/2}'$. We split our proof into two cases
$$
\text{either }\,\,\, \rho\ge d/2\,\,\, \text{ or  }\,\,\,\rho<d/2.
$$

\medskip\noindent\emph{Case 1.} We first consider the case $\rho\ge d/2$.

\medskip\noindent\emph{Case 1.1.} Suppose $\rho<r/6$. By using the observation above, we can obtain
$$
\phi(x_0,\rho)\le C\phi(\bar x_0,3\rho)\quad\text{and}\quad \phi(\bar x_0,r/2)\le C\phi(x_0,r).
$$
We then have by Lemma~\ref{lem:ratio-L1-DMO-bdry},
\begin{align*}
    \phi(x_0,\rho)&\le C\phi(\bar x_0,3\rho)\le C\left(\frac{3\rho}{r/2}\right)^{1/2}\phi(\bar x_0,r/2)+C\omega(3\rho)\\
    &\le C(\rho/r)^{1/2}\phi(x_0,r)+C\omega(\rho).
\end{align*}

\medskip\noindent\emph{Case 1.2.} If $r/6\le \rho<r$, then we simply have by using the above observation
$$
\phi(x_0,\rho)\le C\phi(x_0,r)\le C(\rho/r)^{1/2}\phi(x_0,r).
$$

\medskip\noindent\emph{Case 2.} Suppose $\rho<d/2$. If $r<d/2$, then \eqref{eq:phi-est} simply follows from Lemma~\ref{lem:ratio-L1-DMO-interior}. Thus we may assume $\rho<d/2\le r$. Notice that by Lemma~\ref{lem:ratio-L1-DMO-interior} again,
\begin{align}
    \label{eq:phi-est-d}
    \phi(x_0,\rho)\le C(\rho/d)^{1/2}\phi(x_0,d/2)+C\omega(\rho).
\end{align}
We consider further subcases either $r/4\le d$ or $d<r/4$.

\medskip\noindent\emph{Case 2.1.} Suppose $d/8<r/4\le d$. Then we readily have $\phi(x_0,d/2)\le C\phi(x_0,r)$, which combined with \eqref{eq:phi-est-d} yields
$$
\phi(x_0,\rho)\le C(\rho/d)^{1/2}\phi(x_0,r)+C\omega(\rho)\le C(\rho/r)^{1/2}\phi(x_0,r)+C\omega(\rho).
$$

\medskip\noindent\emph{Case 2.2.} It remains to consider the case $\rho<d/2<r/8$. By applying Lemma~\ref{lem:ratio-L1-DMO-bdry}, we infer
\begin{align*}
    \phi(x_0,d/2)&\le C\phi(\bar x_0,2d)\le C(d/r)^{1/2}\phi(\bar x_0,r/2)+\omega(2d)\\
    &\le C(d/r)^{1/2}\phi(x_0,r)+\omega(2d).
\end{align*}
This, along with \eqref{eq:phi-est-d} and the monotonicity of $t\longmapsto \frac{\omega(t)}{t^{1/2}}$, gives
\begin{align*}
\phi(x_0,\rho)&\le C(\rho/r)^{1/2}\phi(x_0,r)+C(\rho/d)^{1/2}\omega(2d)+C\omega(\rho)\\
&\le C(\rho/r)^{1/2}\phi(x_0,r)+C\omega(\rho).
\end{align*}
This completes the proof.
\end{proof}


\subsection{Higher-order boundary Harnack principle on a fixed boundary}\label{4}
The aim of this subsection is to establish the higher-order boundary Harnack principle, Theorem~\ref{thm:BHP}. In fact, the most technical part of the proof has already been done in the previous subsection, where we proved the Dini mean oscillation of $u/x_n$ in Proposition~\ref{prop:ratio-L1-DMO}. In this subsection, we first extend that result to deduce the higher regularity of $u/x_n$; see Corollary~\ref{cor:HO-u-x_n-reg}. We then follow the argument presented in \cite{TerTorVit22}*{Theorem~1.2} to achieve Theorem~\ref{thm:BHP}.

To extend Proposition~\ref{prop:ratio-L1-DMO}, we need the following auxiliary result.

\begin{proposition}\label{flatschauder1}
Let $n\geq2$, $k\in\mathbb N$, $\omega$ a Dini function and $u$ be a weak solution to \eqref{flatUEeq}:
\begin{equation*}
    \begin{cases}
        -\div(A\D u)=\div\bg&\text{in }B_1^+,\\
        u=0&\text{on }B_1'.
    \end{cases}
    \end{equation*}
Let $A,\bg\in C^{k-1}(\overline{B_1^+})$ with $D^{k-1}_{x'}A,D^{k-1}_{x'}\bg\in C^{0,\omega}_{1}(B_1^+)$. Then $u\in C^{k}_\loc(B_1^+\cup B_1')$. Moreover, if $\|A\|_{C^{k-1}(B_1^+)}+\sum_{|\beta|=k-1}[D_{x'}^{\beta}A]_{C_{1,\mu}^{0,\omega}(B_1^+)}\leq L$, then there exists a positive constant $C$ and a modulus of continuity $\sigma$ depending only on $n,\lambda,k,L$ and $\omega$ such that
\begin{equation}\label{eq:flat-Sch-est}
\|u\|_{C^{k,\sigma}(B^+_{1/2})}\leq C\left(\|u\|_{L^{2}(B_1^+)}+\|\bg\|_{C^{k-1}(B_1^+)}+\sum_{|\beta|=k-1}[D_{x'}^{\beta}\bg]_{C_{1}^{0,\omega}(B_1^+)}\right).
\end{equation}
\end{proposition}

\begin{proof}
Let us prove the result by induction on $k\in\mathbb N$. The case $k=1$ is \cite{DonEscKim18}*{Proposition 2.7}. Let us suppose the result is true for a certain $k\in\mathbb N$ and prove it for $k+1$. Assuming $A,\bg\in C^k(\overline{B_1^+})$ and $D^k_{x'}A,D^k_{x'}\bg\in C_1^{0,\omega}(B_1^+)$, we want to prove $u\in C^{k+1}_\loc(B_1^+\cup B_1')$, which is equivalent to prove $u_i=\partial_iu\in C^{k}_\loc(B_1^+\cup B_1')$ for any $i=1,\ldots,n$. It is easily seen that any tangential derivative $u_i=\partial_i u$ with $i=1,\ldots,n-1$ is a solution to
\begin{equation}\label{eq:tangentialderivative}
 \begin{cases}
 -\div(A\D u_i)=\div (\partial_i\bg+\partial_iA\nabla u)   & \text{in }B^+_{1}\\
 u_i=0 & \text{on }B'_{1}.
 \end{cases}
 \end{equation}
 Hence
 \begin{equation}\label{u_iL1}
 u_i\in C^{k}_\loc(B_1^+\cup B_1') \qquad\mathrm{for \  any \ } i=1,\ldots,n-1
 \end{equation}
 by the inductive hypothesis since $\overline\bg:=\partial_i\bg+\partial_iA\nabla u\in C^{k-1}$ with $D^{k-1}_{x'}\overline\bg \in C_1^{0,\omega}$. Let us remark that we used that $A,\bg\in C^k\subset C^{k-1,1}$ and by standard Schauder estimates $u\in C^{k,\beta}$ for any $0<\beta<1$, i.e. $\nabla u \in C^{k-1,\beta}$ for any $0<\beta<1$.

 In order to prove that $u_n\in C^{k}_\loc(B_1^+\cup B_1')$, it is enough to prove that $u_{nn}=\partial^2_{nn}u\in C^{k-1}_\loc(B_1^+\cup B_1')$ because $u_{ni}:=\partial^2_{ni}u\in C^{k-1}_\loc(B_1^+\cup B_1')$ for any $i=1,\ldots,n-1$ was already given by \eqref{u_iL1}. Then, for this last partial derivative, one can rewrite equation \eqref{flatUEeq} as
$$
 u_{nn}=-\frac{1}{a_{nn}}\left(\div \bg+\sum_{i=1}^{n-1}\partial_i(\mean{A\nabla u,\vec e_i})+\sum_{i=1}^{n-1}\partial_n(a_{ni}u_i)+\partial_na_{nn}u_n\right)\in C^{k-1}.
$$
Observe that $a_{nn}=\mean{A\vec e_n,\vec e_n}\geq \lambda>0$.
\end{proof}


By employing the standard flattening of the boundary \eqref{standard_diffeo}, Proposition~\ref{flatschauder1} can be generalized to the boundary Schauder estimate in $C^{k,1-\text{DMO}}$ domains as follows: let $u$ be a solution of 
    \begin{align}\label{eqDMOBoundary}
        \begin{cases}
            -\div(A\D u)=\div\bff&\text{in }\Omega\cap B_1,\\
            u=0&\text{on }\partial\Omega\cap B_1.
        \end{cases}
    \end{align}
    Then the following result holds true
\begin{corollary}[Boundary Schauder estimates in $C^{k,1-\mathrm{DMO}}$ domains]\label{CkUnif1}
Let $n\geq2$, $k\in\mathbb N$, $\omega$ a Dini function, and $u$ be a weak solution to \eqref{eqDMOBoundary}. Let $A,\bff\in C^{k-1,\omega}_{1}(\Omega\cap B_1)$, $\gamma\in C^{k,\omega}_{1}(B_1')$. Then $u\in C^{k}_\loc(\overline\Omega\cap B_1)$. Moreover, if $\|A\|_{C^{k-1,\omega}_{1}(\Omega\cap B_1)}+\|\gamma\|_{C^{k,\omega}_{1}(B'_1)}\leq L$, then there exists a positive constant $C$ and a modulus of continuity $\sigma$ depending only on $n,\lambda,k,L$, and $\omega$ such that
\begin{equation*}
\|u\|_{C^{k,\sigma}(\Omega\cap B_{1/2})}\leq C\left(\|u\|_{L^{2}(\Omega\cap B_1)}+\|\bff\|_{C^{k-1,\omega}_{1}(\Omega\cap B_1)}\right).
\end{equation*}
\end{corollary}
Let us remark here that the modulus of continuity $\sigma(r)$ is comparable with $\int_0^r\frac{\hat\omega(s)}{s} \, ds \, +r^\beta$ for any chosen $\beta\in(0,1)$, where $\hat\omega$ is a Dini function derived from $\omega$ as in \eqref{eq:hat-eta}.

We combine Propositions~\ref{prop:ratio-L1-DMO} and \ref{flatschauder1} to derive the following result.

\begin{proposition}
    \label{prop:ratio-L1-DMO-k}
    Let us assume the same conditions of Proposition \ref{flatschauder1}. Then $u/x_n\in C^{k-1}_\loc(B_1^+\cup B_1')$ and $D^{k-1}_{x'}\left(\frac{u}{x_n}\right)$ is of $L^1(x_ndx)$-DMO in $B^+_{1/2}$.
\end{proposition}

\begin{proof}
The fact that $u/x_n\in C^{k-1}_\loc(B_1^+\cup B_1')$ follows by Proposition \ref{flatschauder1} and the fact that
$$\frac{u(x',x_n)}{x_n}=\int_0^{1}\partial_n u(x',sx_n) \,ds.$$
Let us prove that $D^{k-1}_{x'}\left(\frac{u}{x_n}\right)$ is of $L^1(x_ndx)$-DMO by induction on $k\in\mathbb N$. The case $k=1$ is given in Proposition~\ref{prop:ratio-L1-DMO}. Let us suppose the result true for a certain $k\in\mathbb N$ and prove it for $k+1$. Assuming $A,\bg\in C^k(\overline{B_1^+})$ and $D^{k}_{x'}A,D^{k}_{x'}\bg\in C_1^{0,\omega}(B_1^+)$, we want to prove $D^{k}_{x'}\left(\frac{u}{x_n}\right)$ is of $L^1(x_ndx)$-DMO in $B^+_{1/2}$. Let us remark that Proposition~\ref{flatschauder1} implies $u\in C^{k+1}$ and it remains to show that $D^{k-1}_{x'}\left(\frac{\partial_iu}{x_n}\right)$ is of $L^1(x_ndx)$-DMO in $B^+_{1/2}$ for any $i=1,\ldots,n-1$. In the proof of Proposition \ref{flatschauder1} we proved that any tangential derivative $u_i=\partial_i u$ with $i=1,\ldots,n-1$ is a solution to \eqref{eq:tangentialderivative}. Hence the result is proved by the inductive hypothesis since $\overline\bg:=\partial_i\bg+\partial_iA\nabla u\in C^{k-1}$ with $D^{k-1}_{x'}\overline\bg \in C_1^{0,\omega}$.
\end{proof}

\begin{corollary}\label{cor:HO-u-x_n-reg}
    Let $k\ge1$, and suppose that $u$ is a solution of
    \begin{align}\label{eq:UE-Diri}
        \begin{cases}
            -\div(A\D u)=g&\text{in }B_1^+,\\
            u=0&\text{on }B'_1.
        \end{cases}
    \end{align}
    Assume $A, g\in C^{k-1}(\overline{B_1^+})$ and $D_{x'}^{k-1}A\in C_1^{0,\omega}(B_1^+)$ for some Dini function $\omega$. Then the conclusion of Proposition~\ref{prop:ratio-L1-DMO-k} holds.
\end{corollary}

\begin{proof}
In view of Proposition~\ref{prop:ratio-L1-DMO-k}, it is enough to find a function $\bg:B_{3/4}^+\to\R^n$ such that $\div\bg=g$, and $\bg\in C^{k-1}(\overline{B_{3/4}^+})$ with $D_{x'}^{k-1}\bg\in C_1^{0,\omega}(B^+_{3/4})$. To this end, we take a smooth and convex domain $\cD$ such that $B_{3/4}^+\subset \cD\subset B_1^+$, and consider a Dirichlet problem
\begin{align*}
    \begin{cases}
        \Delta w=g&\text{in }\cD,\\
        w=0&\text{on }\partial \cD.
    \end{cases}
\end{align*}
By the elliptic theory, there exists a (unique) solution $w$, which belongs to $C^{k,\be}(\cD)$ for any $0<\be<1$. Then $\bg:=\D w$ is the desired one.    
\end{proof}

Now we are ready to establish Theorem \ref{thm:BHP} by following the argument introduced in \cite{TerTorVit22}*{Theorem~1.2}.

Let us consider two functions $u,v$ solving \eqref{BHconditions}, i.e.,
\begin{equation*}
\begin{cases}
-\div\left(A\nabla v\right)=f &\mathrm{in \ }\Omega\cap B_1,\\
-\div\left(A\nabla u\right)=g &\mathrm{in \ }\Omega\cap B_1,\\
u>0 &\mathrm{in \ }\Omega\cap B_1,\\
u=v=0, \quad \partial_{\nu} u<0&\mathrm{on \ }\partial\Omega\cap B_1,
\end{cases}
\end{equation*}
where $A$ is symmetric and satisfies \eqref{eq:assump-coeffi}, $0\in\partial\Omega$ and $\nu$ stands for the unit outward normal vector to $\Omega$ on $\partial\Omega$.

As shown in \cite{TerTorVit22}, the ratio $w=v/u$ solves the degenerate elliptic equation
\begin{equation*}
-\div\left(u^2A\nabla w\right)=uf-gv \quad\mathrm{in \ }\Omega\cap B_1;
\end{equation*}
with associated conormal boundary condition at $\partial\Omega\cap B_1$, i.e., it is a weak solution in the weighted Sobolev space $H^1(\Omega\cap B_1,u^2(x)dx)$
\begin{equation*}
    \int_{\Omega\cap B_1}u^2A\nabla w \nabla \phi=\int_{\Omega\cap B_1}(uf-vg)\phi
\end{equation*}
for any $\phi\in C^\infty_c(\overline\Omega\cap B_1)$. Theorem \ref{thm:BHP} is proved by composing $u,v$ with the standard diffeomorphism which flattens the boundary, and proving the regularity for the ratio near the flat boundary. Then, the curved world inherits the regularity by composing back with the same diffeomorphism.

Let $k\in\mathbb N$ and $\omega$ be a Dini function. Let us assume that $A,f,g\in C_1^{k-1,\omega}(\Omega\cap B_1)$ and $\partial\Omega\in C^{k,1-\mathrm{DMO}}$.
After rotations and dilations, the domain $\Omega\cap B_1$ can be locally parametrized with $\gamma\in C_1^{k,\omega}$:
$$
\Omega\cap B_{1}=\{x_n>\gamma(x')\}\cap B_{1},\qquad \partial\Omega\cap B_{1}=\{x_n=\gamma(x')\}\cap B_{1}.
$$
Let us consider the standard local diffeomorphism which straightens the boundary $\partial\Omega$:
\begin{align}
    \label{standard_diffeo}
    \Phi(x',x_n)=(x',x_n+\gamma(x')),
\end{align}
which is of class $C^{k,\omega}_1$. By composing $u,v,f,g$ with $\Phi$, one can see that $\tilde v=v\circ\Phi$ and $\tilde u=u\circ\Phi$ solve, up to a further dilation,
\begin{align*}
    \begin{cases}
        -\div(\tilde A\D\tilde v)=\tilde f&\text{in }B_1^+,\\
        -\div(\tilde A\D\tilde u)=\tilde g&\text{in }B_1^+,\\
        \tilde u>0&\text{in }B_1^+,\\
        \tilde u=\tilde v=0,\,\,\, -\partial_n\tilde u<0&\text{on }B_1',
    \end{cases}
\end{align*}
with new free terms $\tilde f=f\circ\Phi$, $\tilde g=g\circ\Phi$ and coefficients $\tilde A=(J_\Phi^{-1})(A\circ\Phi)(J_{\Phi}^{-1})^T$, where $J_\Phi$ is the Jacobian associated with $\Phi$. It is easily seen that $|\det J_\Phi|\equiv1$, which combined with Lemma~\ref{lem:productDMO} implies that $\tilde f,\tilde g, \tilde A\in C_1^{k-1,\tilde\omega}$ for some Dini function $\tilde\omega$. Hence we are concerned with the regularity of the ratio
$$\tilde w=w\circ\Phi=\frac{v\circ\Phi}{u\circ\Phi}.$$
To prove Theorem \ref{thm:BHP} it suffices to show $\tilde w\in C^k$ up to $\Sigma=\{x_n=0\}$. 

In the proof below we will rename for sake of simplicity of notation $\tilde u,\tilde v,\tilde w,\tilde f,\tilde g, \tilde A, \tilde\omega$ as $u,v,w,f,g,A,\omega$.

\begin{proof}[Proof of Theorem~\ref{thm:BHP}]
After composing with the standard diffeomorphism in \eqref{standard_diffeo}, the ratio solves
\begin{equation*}
    -\div\left(x_n^2\left(\frac{u}{x_n}\right)^2A\nabla w\right)=x_n \left(\frac{u}{x_n}f-\frac{v}{x_n}g\right).
\end{equation*}
Taking $k\in\mathbb N$, we have $A,f,g\in C_1^{k-1,\omega}$. Then by Corollary \ref{cor:HO-u-x_n-reg}, we have that $u/x_n\in C^{k-1}$ and $D^{k-1}_{x'}(u/x_n)\in C_{1,\mu_1}^{0,\tilde \omega}$ (the same for $v$) for some Dini function $\tilde\omega$ and $d\mu_\alpha=x_n^\alpha dx$. Hence, $w$ is a solution to
\begin{equation*}
    -\div\left(x_n^2 \overline A\nabla w\right)=\div(x_n^2 \overline\bff),
\end{equation*}
where $\overline A=\left(\frac{u}{x_n}\right)^2A$ and
$$\overline\bff(x',x_n)=\frac{\vec e_n}{x_n^2}\int_0^{x_n}t \overline f(x',t) \, dt= \vec e_n \int_0^{1}s \overline f(x',sx_n) \, ds,$$
where $\overline f=\frac{u}{x_n}f-\frac{v}{x_n}g$. By Lemma \ref{lem:productDMO}, Lemma \ref{lem:inclusionweight} and Corollary \ref{cor:integ-DMO-k}, we have $\overline A,\overline\bff\in C^{k-1}$ and $D^{k-1}_{x'}\overline A, D^{k-1}_{x'}\overline\bff\in C_{1,\mu_2}^{0,\overline\omega}$, and thus we can conclude by applying Theorem \ref{thm:deg-pde-HO}.
\end{proof}

\subsection{Higher-order boundary Harnack principle across regular zero sets}\label{5}

The aim of this subsection is the proof of Theorem~\ref{thm:BHPRu}. We begin by deriving the $C^{k,2-\mathrm{DMO}}$-regularity of the $A$-harmonic function $u$ for $A\in C_2^{k-1,\omega}$. In fact, in the proposition below, we deal with a more general situation. 

In Theorem~\ref{thm:BHPRu}, we require an $L^2$-DMO type condition on the coefficient $A$, whereas we impose $L^1$-DMO type conditions for the other main results. This is because if $A$ belongs to the $L^1$-DMO type space $C_1^{k-1,\omega}$, we expect only $C^{k}$-regularity of $u$, which is insufficient for our objective. As we will soon observe, Dini mean oscillation of derivatives of $u$ is necessary for the DMO type condition for the new coefficient after flattening the regular nodal set.

\begin{proposition}\label{flatschauder2}
Let $n\geq2$, $k\in\mathbb N$, $\omega$ a Dini function and $u$ be a weak solution to \eqref{flatUEeq}:
\begin{align*}
    \begin{cases}
        -\div(A\D u)=\div\bg&\text{in }B_1^+,\\
        u=0&\text{on }B_1'.
    \end{cases}
    \end{align*}
Let $A,\bg\in C^{k-1,\omega}_{2}(B_1^+)$. Then $u\in C^{k,2-\mathrm{DMO}}_\loc(B_1^+\cup B_1')$. Moreover, if $$
\|A\|_{C^{k-1,\omega}_{2}(B_1^+)}\leq L,
$$ 
then there exists a positive constant $C$ and a Dini modulus of continuity $\overline\omega$ depending only on $n,\lambda,k,L$, and $\omega$ such that
\begin{equation*}
\|u\|_{C_2^{k,\overline\omega}(B^+_{1/2})}\leq C\left(\|u\|_{L^{2}(B_1^+)}+\|\bg\|_{C^{k-1,\omega}_{2}(B_1^+)}\right).
\end{equation*}
\end{proposition}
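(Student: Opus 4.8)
The plan is to argue by induction on $k \in \mathbb N$, exactly paralleling the structure of the proof of Proposition \ref{flatschauder1}, but tracking $L^2(dx)$-DMO moduli instead of $L^1(dx)$-DMO moduli. The base case $k = 1$ is the $C^{1,2\text{-}\mathrm{DMO}}$ boundary estimate from \cite[Theorem~1]{Don12}: if $A, \bg \in C^{0,\omega}_2(B_1^+)$ and $u$ solves \eqref{flatUEeq}, then $\nabla u \in C^{0,\overline\omega}_2(B^+_{1/2})$ with $\overline\omega$ a Dini modulus controlled by $\omega$, and one has the estimate $\|u\|_{C^{1,\overline\omega}_2(B^+_{1/2})} \le C(\|u\|_{L^2(B_1^+)} + \|\bg\|_{C^{0,\omega}_2(B_1^+)})$. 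Here I would also note at the outset that, since $C^{k,\omega}_2 \subset C^{k,\sigma}$ by \eqref{chaininclusion} for a suitable modulus $\sigma$, standard Schauder theory already gives $u \in C^{k,\beta}_\loc$ for every $\beta \in (0,1)$, so all derivatives of $u$ up to order $k$ that appear in the forcing terms below are classically defined and Hölder continuous; only the sharper DMO conclusion on the $k$-th derivatives requires work.

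For the inductive step, assume the statement holds for $k$ and suppose $A, \bg \in C^{k,\omega}_2(B_1^+)$; the goal is $u \in C^{k+1,2\text{-}\mathrm{DMO}}_\loc(B_1^+ \cup B_1')$, i.e. $\partial_i u \in C^{k,2\text{-}\mathrm{DMO}}_\loc$ for every $i = 1,\dots,n$. For a tangential index $i \in \{1,\dots,n-1\}$, differentiating \eqref{flatUEeq} shows $u_i = \partial_i u$ solves \eqref{eq:tangentialderivative} with forcing $\overline\bg := \partial_i \bg + (\partial_i A)\nabla u$, and since $\partial_i \bg, \partial_i A \in C^{k-1,\omega}_2$ and $\nabla u \in C^{k-1,\beta} \subset C^{k-1,\omega}_2$ (indeed better) with $D^{k-1}_{x'}\overline\bg \in C^{0,\omega}_2$, the inductive hypothesis yields $u_i \in C^{k,2\text{-}\mathrm{DMO}}_\loc(B_1^+ \cup B_1')$ for $i = 1,\dots,n-1$. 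This controls all derivatives $\partial^2_{ni} u$ with $i \le n-1$. To finish, it remains to show $u_{nn} = \partial^2_{nn} u \in C^{k-1,2\text{-}\mathrm{DMO}}_\loc$. As in the divergence-form equation, solve for $u_{nn}$ from \eqref{flatUEeq}:
\begin{equation*}
u_{nn} = -\frac{1}{a_{nn}}\left(\div\bg + \sum_{i=1}^{n-1}\partial_i(\mean{A\nabla u,\vec e_i}) + \sum_{i=1}^{n-1}\partial_n(a_{ni}u_i) + (\partial_n a_{nn})u_n\right),
\end{equation*}
using $a_{nn} \ge \lambda > 0$. Every term on the right is a sum of products and quotients of functions already known to lie in $C^{k-1,2\text{-}\mathrm{DMO}}_\loc$: $D^{k-1}$-derivatives of $\bg$ and $A$ are in $C^{0,\omega}_2$ by hypothesis, $\nabla u$ and $u_i$ ($i \le n-1$) together with their tangential derivatives up to order $k-1$ were just shown to be $L^2$-DMO, and $1/a_{nn}$ is $L^2$-DMO by the quotient rule. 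The needed algebra — that $C^{k-1,2\text{-}\mathrm{DMO}}$ is stable under products and under division by a function bounded away from zero — is provided by (the $q=2$, $\mu = $ Lebesgue case of) Lemma \ref{lem:productDMO}. Summing the contributions and reading off the resulting Dini modulus $\overline\omega$ gives $u_{nn} \in C^{k-1,2\text{-}\mathrm{DMO}}_\loc$, hence $u_n \in C^{k,2\text{-}\mathrm{DMO}}_\loc$, completing the induction. The quantitative estimate follows by bookkeeping the constants through each step.

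The main obstacle is the bookkeeping of the DMO modulus $\overline\omega$ through the induction: each application of \cite[Theorem~1]{Don12} replaces $\omega$ by a new Dini modulus (morally $\omega(r) + \int_0^r \omega(s)/s\,ds$ up to a power correction), and each product/quotient via Lemma \ref{lem:productDMO} mixes moduli, so one must check that after $k$ iterations the result is still a genuine Dini function depending only on $n,\lambda,k,L,\omega$ — this is the place where the argument is more delicate than in the $L^1$ case, since the $L^2$-DMO seminorm does not interact with pointwise products as cleanly as the sup-norm. A secondary, purely technical point is that \cite[Theorem~1]{Don12} must be applicable to the tangentially-differentiated equation \eqref{eq:tangentialderivative}, whose forcing involves $\nabla u$; this is fine because the induction is set up so that the forcing's $(k-1)$-th tangential derivative is already controlled in $C^{0,\omega}_2$ before invoking the hypothesis at the next level.
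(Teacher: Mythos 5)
Your proposal is correct and follows essentially the same route as the paper: induction on $k$ with base case from \cite[Theorem~1]{Don12}, tangential differentiation of the equation to apply the inductive hypothesis to $u_i$ for $i\le n-1$, and then solving algebraically for $u_{nn}$ using $a_{nn}\ge\lambda$. The only detail worth flagging is that \cite[Theorem~1]{Don12} is an interior estimate with partially DMO data, so the base case requires taking odd extensions of $u$, $A$, and $\bg$ across $B_1'$ before invoking it, as the paper does.
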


\begin{proof}
When $k=1$, the result can be inferred from \cite{Don12}*{Theorem~1}. In fact, this theorem concerns the interior $C^{1,2-\mathrm{DMO}}$-regularity of the solution with data of partially Dini mean-oscillation with respect to $x'$-variable. It is worth noting that while the statement of the theorem indicates that the solution is $C^1$, its $C^{1,2-\mathrm{DMO}}$ regularity can be easily inferred from the proof. To apply this result in our context, we take odd-extensions for $u$, $A$ and $\bg$ from $B_1^+$ to $B_1$. Then \eqref{flatUEeq} gives
$$
-\div(A\D u)=\div\bg\quad\text{in }B_1.
$$
Since the extended $A$ and $\bg$ still remain of partially Dini mean-oscillation, the case $k=1$ follows.

We can extend the case $k=1$ to the general case $k\in\mathbb{N}$ by using the induction argument demonstrated in the proof of Proposition~\ref{flatschauder1}.
\end{proof}

By employing the standard flattening of the boundary \eqref{standard_diffeo}, Proposition~\ref{flatschauder2} can be generalized to the boundary Schauder estimate in $C^{k,2-\text{DMO}}$ domains as follows: let $u$ be a solution of \eqref{eqDMOBoundary}, then the following result holds true
\begin{corollary}[Boundary Schauder estimates in $C^{k,2-\mathrm{DMO}}$ domains]\label{CkUnif2}
Let $n\geq2$, $k\in\mathbb N$, $\omega$ a Dini function and $u$ be a weak solution to \eqref{eqDMOBoundary}. Let $A,\bff\in C^{k-1,\omega}_{2}(\Omega\cap B_1)$, $\gamma\in C^{k,\omega}_{2}(B_1')$. Then $u\in C^{k,2-\mathrm{DMO}}_\loc(\overline\Omega\cap B_1)$. Moreover, if $\|A\|_{C^{k-1,\omega}_{2}(\Omega\cap B_1)}+\|\gamma\|_{C^{k,\omega}_{2}(B'_1)}\leq L$, then there exists a positive constant $C$ and a Dini modulus of continuity $\overline\omega$ depending only on $n,\lambda,k,L$, and $\omega$ such that
\begin{equation*}
\|u\|_{C_{2}^{k,\overline\omega}(\Omega\cap B_{1/2})}\leq C\left(\|u\|_{L^{2}(\Omega\cap B_1)}+\|\bff\|_{C^{k-1,\omega}_{2}(\Omega\cap B_1)}\right).
\end{equation*}
\end{corollary}

\begin{remark}\label{cor:uoverx_n2}
By using Proposition \ref{flatschauder2} and Lemma \ref{lem:integ-DMO}, in the $L^2$-DMO setting one can prove the following counterpart of Proposition~\ref{prop:ratio-L1-DMO-k}: 
let us assume the same conditions of Proposition \ref{flatschauder2}. Then there exists a Dini function $\tilde\omega$ 
and a positive constant $C$ depending only on $n,\lambda,k,L$, and $\omega$ such that
\begin{equation*}
\left\|\frac{u}{x_n}\right\|_{C_{2}^{k-1,\tilde\omega}(B^+_{1/2})}\leq C\left(\|u\|_{L^{2}(B_1^+)}+\|\bg\|_{C^{k-1,\omega}_{2}(B_1^+)}\right).
\end{equation*}
\end{remark}

Now, let us return to Theorem~\ref{thm:BHPRu}. To prove it, we use the following strategy: first we localize the problem around a given point on the regular part $R(u)$ of the nodal set $Z(u)=u^{-1}\{0\}$, where $u$ is a given $A$-harmonic function, i.e., local solution to $\div(A\nabla u)=0$. For simplicity $0\in R(u)\cap B_1$, $u$ is $A$-harmonic in $B_1$ and $S(u)\cap B_1=\emptyset$. Considering another $A$-harmonic function $v$ in $B_1$ such that locally $Z(u)\subseteq Z(v)$, then the ratio $w=v/u$ is solution to
$$\div(u^2A\nabla w)=0\qquad\mathrm{in \ }B_1.$$
We would like to straighten the regular nodal set and get the regularity estimates for the ratio from both sides of $R(u)$, and finally glue them together across the free interface. Under $L^2$-DMO type assumptions on coefficients, specifically $A\in C_{2}^{k-1,\omega}$, the solution $u$ belongs to $C_{2}^{k,\tilde\omega}$-spaces, by the interior counterpart of Proposition \ref{flatschauder2}. We would like to stress the fact that the implicit function theorem cannot ensure that $R(u)\cap B_1$ is a hypersurface of class $C_{2}^{k,\tilde\omega}$ since DMO type conditions are not preserved under restrictions to lower dimensional subsets. For this reason we need to make use of a hodograph transformation \cites{Fri34,KinNir77} which is, in the present case, a diffeomorphism of class $C_{2}^{k,\tilde\omega}$ which flattens the level sets of $u$.

We consider the $A$-harmonic function $u$ in $B_1$ with $0\in R(u)$, $S(u)\cap B_1=\emptyset$ and $|\D u|\ge c>0$. This nondegeneracy condition allows us assume, 
up to rotations, $|\partial_n u|\geq c>0$. Then, we can define the following diffeomorphism
\begin{equation}\label{udiffeo}
    \Psi(x',x_n)=(x',u(x',x_n)).
\end{equation}
Denoting by $x=(x',x_n)$ the original coordinates and $y=(y',y_n)$ the new coordinates,
\begin{equation*}
    \begin{cases}
        y'=x' \\
        y_n=u(x',x_n).
    \end{cases}
\end{equation*}
Up to dilations, $\Psi$ maps $\{u>0\}\cap B_1$ into $B_1^+=B_1\cap\{y_n>0\}$, $\{u<0\}\cap B_1$ into $B_1^-=B_1\cap\{y_n<0\}$ and $R(u)\cap B_1$ into $B_1'=B_1\cap\{y_n=0\}$. In particular, for any $t\in\R$, the level set $u^{-1}\{t\}\cap B_1$ is locally mapped into the hyperplane $\{y_n=t\}\cap B_1$. Then,
\begin{equation*}
    u\circ\Psi^{-1}(y',y_n)=y_n.
\end{equation*}
The Jacobian associated with $\Psi$ is given by
\begin{align*}
J_\Psi(x)=\left(\begin{array}{c|c}
\mathbb{I}_{n-1}&{\mathbf 0}\\\hline
(\nabla_{x'}u(x))^T&\partial_nu(x)
\end{array}\right), \qquad \mathrm{with}\quad |\mathrm{det} \, J_\Psi(x)|=|\partial_nu(x)|\geq c>0,
\end{align*}
and hence $\Psi$ is locally invertible and bi-Lipschitz by the implicit function theorem. In fact
$$J_{\Psi^{-1}}=J_{\Psi}^{-1}\circ\Psi^{-1};$$
that is,
\begin{align*}
J_{\Psi^{-1}}(y)=\left(\begin{array}{c|c}
\mathbb{I}_{n-1}&{\mathbf 0}\\\hline
-(\nabla_{x'}u\circ\Psi^{-1}(y))^T/\partial_n u\circ\Psi^{-1}(y)&1/\partial_n u\circ\Psi^{-1}(y)
\end{array}\right),
\end{align*}
with
$$|\mathrm{det} \, J_{\Psi^{-1}}(y)|=\frac{1}{|\partial_nu\circ\Psi^{-1}(y)|}\geq\frac{1}{\|\nabla u\|_{\infty}}>0.$$

Then, up to dilations, $\tilde w=w\circ\Psi^{-1}$ solves
\begin{equation*}
\div\left(x_n^2\tilde A\nabla \tilde w\right)=0 \qquad\mathrm{in \ } B_1,
\end{equation*}
where
\begin{equation*}
\tilde A=(J_{\Psi^{-1}}^{-1}) (A \circ\Psi^{-1}) (J_{\Psi^{-1}}^{-1})^T|\mathrm{det} \, J_{\Psi^{-1}}|=\frac{J_\Psi A J_\Psi^T}{|\mathrm{det} \, J_{\Psi}|}\circ\Psi^{-1}.
\end{equation*}
Notice that DMO type conditions are preserved under composition with this diffeomorphism
. Then, when $u\in C^{k,\tilde\omega}_2$ the new matrix $\tilde A$ belongs to $C_{2}^{k-1,\tilde\omega}\subset C_{1}^{k-1,\tilde\omega}\subset C_{1,\mu_2}^{k-1,\tilde\omega}$ with $d\mu_2=x_n^2dx$. Now we shall prove the $C^k$ regularity of $\tilde w$ across $\Sigma=\{y_n=0\}$ and composing back with $\Psi$, which is of class $C_{2}^{k,\tilde\omega}$, this will give the same regularity for $w=\tilde w\circ\Psi$. Let us denote $\tilde A,\tilde w, \tilde\omega$ by $A,w,\omega$.

\begin{proof}[Proof of Theorem \ref{thm:BHPRu}]
After applying the diffeomorphism in \eqref{udiffeo}, the ratio solves
\begin{equation*}
\div\left(x_n^2 A\nabla  w\right)=0 \qquad\mathrm{in \ } B_1,
\end{equation*}
where $A$ belongs to $C_{1,\mu_2}^{k-1,\omega}$ for some Dini function $\omega$, and solves the same problem separately on the upper and lower half balls $B_1^+, B_1^-$ with conormal boundary condition at $B_1'$. Hence, by applying Theorem \ref{thm:deg-pde-HO} on the two half balls separately, we get that $w$ belongs to $C^{k,\sigma}(B_{1/2}^+)$ and $C^{k,\sigma}(B_{1/2}^-)$ for some modulus $\sigma$ which is $\be$-nonincreasing (i.e., $r\mapsto\sigma_\be(r)r^{-\be}$ is nonincreasing) for some $\beta\in(0,1]$.
Finally, we can apply the gluing lemma in \cite{TerTorVit22}*{Lemma 2.11} which can be generalized to the case of the present modulus $\sigma$ due to the validity of \cite{JeoVit23}*{Lemma 3.9 and Lemma 3.10}. Let us stress that the validity of the gluing lemma relies on the boundary condition \eqref{Neumann}.
\end{proof}


\appendix
\section{Properties of DMO functions}\label{A}

The following doubling property of the measure can be directly checked.
\begin{lemma}
    \label{lem:doubl}
For $\al>-1$, let $d\mu=x_n^\al dx$. Suppose that $\cD$ is a Lipschitz and convex domain in $B_1^+$. Then there exists a constant $C>0$, depending only on $n$, $\al$, and the Lipschitz constant of $\cD$, such that
\begin{align*}
    \mu(B_{2r}(x_0)\cap \cD)\le C\mu(B_r(x_0)\cap \cD)
\end{align*}
for any $x_0\in \cD$ and $0<r<\diam \cD$.
\end{lemma}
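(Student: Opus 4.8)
The plan is to reduce the statement to two ingredients: an explicit one-dimensional integration computing $\mu(B_\rho(x_0)\cap\{x_n>0\})$ (which behaves like $\rho^n\,(\max\{\rho,(x_0)_n\})^\alpha$ up to a constant depending only on $n,\alpha$), and a purely geometric ``fatness'' property of the convex set $\cD$. Note that $\cD\subseteq B_1^+$ gives $\diam\cD\le 2$, and since $\cD$ is an open convex set it contains an inscribed ball $B_{\rho_0}(q_0)\subseteq\cD$; the radius $\rho_0>0$ depends only on $\cD$ (and is controlled by its Lipschitz character). I will split the argument according to whether $B_{2r}(x_0)$ is far from the hyperplane $\{x_n=0\}$ or not, i.e.\ whether $(x_0)_n\ge 4r$ or $(x_0)_n<4r$, and in both cases I will sandwich $\mu(B_{2r}(x_0)\cap\cD)$ from above and $\mu(B_r(x_0)\cap\cD)$ from below by the same power of $r$ times a power of $\max\{r,(x_0)_n\}$.

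For the upper bound, since $\cD\subseteq\{x_n>0\}$ one has $\mu(B_{2r}(x_0)\cap\cD)\le\mu(B_{2r}(x_0)\cap\{x_n>0\})\le \omega_{n-1}(2r)^{n-1}\int_{\max\{0,(x_0)_n-2r\}}^{(x_0)_n+2r}t^\alpha\,dt$, by integrating in $x_n$ over the $(n-1)$-dimensional cross–sections of the ball; this is $\le C(n,\alpha)\,r^n(x_0)_n^\alpha$ when $(x_0)_n\ge 4r$ (then $t\simeq(x_0)_n$ on the interval) and $\le C(n,\alpha)\,r^{n+\alpha}$ when $(x_0)_n<4r$ (then $\int_0^{6r}t^\alpha\,dt=\tfrac{(6r)^{\alpha+1}}{\alpha+1}$, finite since $\alpha>-1$). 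For the geometric fact I claim there exist $\delta=\delta(\cD)\in(0,\tfrac12]$ and $q\in\cD$ with $|q-x_0|\le r/2$ and $B_{\delta r}(q)\subseteq B_r(x_0)\cap\cD$. Indeed, if $|x_0-q_0|\le r/2$ one takes $q=q_0$, so that $B_{\min\{\rho_0,r/2\}}(q_0)\subseteq B_r(x_0)\cap\cD$ and $\min\{\rho_0,r/2\}\ge\min\{\rho_0/2,1/2\}r$ by $r\le\diam\cD\le 2$; otherwise one takes $q$ on the segment $[x_0,q_0]$ at distance $r/2$ from $x_0$ and uses that, by convexity, the dilation $y\mapsto x_0+\lambda(y-x_0)$ with $\lambda=\tfrac{r}{2|x_0-q_0|}\in(0,1)$ maps $B_{\rho_0}(q_0)\subseteq\cD$ into $\cD$, its image being exactly $B_{\lambda\rho_0}(q)$; since $|x_0-q_0|\le\diam\cD\le 2$ one has $\lambda\ge r/4$, and intersecting with $B_{r/2}(q)\subseteq B_r(x_0)$ gives the claim with $\delta=\min\{\rho_0/4,1/2\}$.

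For the lower bound, observe that $B_{\delta r}(q)\subseteq B_1^+$ forces $q_n\ge\delta r$, while $|q-x_0|\le r/2$ gives $|q_n-(x_0)_n|\le r/2$. If $(x_0)_n\ge 4r$ then $q_n\simeq(x_0)_n$ and $x_n^\alpha\simeq(x_0)_n^\alpha$ on $B_{\delta r}(q)$, hence $\mu(B_r(x_0)\cap\cD)\ge\mu(B_{\delta r}(q))\ge c(n,\alpha,\cD)\,r^n(x_0)_n^\alpha$, which together with the upper bound closes this case. If $(x_0)_n<4r$ then $\delta r\le q_n<5r$, so $0<x_n\le 6r$ throughout $B_{\delta r}(q)$; for $\alpha\ge0$ one integrates over the half of $B_{\delta r}(q)$ where $x_n\ge q_n\ (\ge\delta r)$ to get $\mu(B_{\delta r}(q))\ge \tfrac12\omega_n(\delta r)^{n}(\delta r)^{\alpha}$, while for $-1<\alpha<0$ one uses $x_n^\alpha\ge(6r)^\alpha$ on all of $B_{\delta r}(q)$ to get $\mu(B_{\delta r}(q))\ge(6r)^\alpha\omega_n(\delta r)^n$; either way $\mu(B_r(x_0)\cap\cD)\ge c(n,\alpha,\cD)\,r^{n+\alpha}$, and combined with $\mu(B_{2r}(x_0)\cap\cD)\le C(n,\alpha)\,r^{n+\alpha}$ this finishes the proof with $C=C(n,\alpha,\cD)$. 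The only delicate point is this near-boundary lower bound: the sign of $\alpha$ determines whether one integrates over the far half of the small ball $B_{\delta r}(q)$ ($\alpha\ge0$, where the weight degenerates) or bounds the weight from below on the whole ball ($-1<\alpha<0$, where it explodes), and convexity is used precisely to guarantee that $B_r(x_0)\cap\cD$ always contains a full ball of radius comparable to $r$.
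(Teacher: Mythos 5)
Your argument is correct. Note that the paper does not actually prove Lemma~\ref{lem:doubl} --- it only remarks that the doubling property ``can be directly checked'' --- so there is no official argument to compare against; your two-ingredient scheme (the explicit slicing bound $\mu(B_\rho(x_0)\cap\{x_n>0\})\le C\rho^{n}\max\{\rho,(x_0)_n\}^{\alpha}$ from above, plus the convexity-based fatness claim producing a ball $B_{\delta r}(q)\subseteq B_r(x_0)\cap\cD$ with $q_n\ge \delta r$ for the matching lower bound) is a perfectly valid way to carry out that check, and the case analysis ($(x_0)_n\ge 4r$ versus $(x_0)_n<4r$, and within the latter $\alpha\ge0$ versus $-1<\alpha<0$) is handled correctly, with $\alpha>-1$ entering exactly where it should. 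The one discrepancy is in the dependence of the constant: your $\delta$ is built from the inradius $\rho_0$ of $\cD$, so the final constant is $C(n,\alpha,\rho_0)$ rather than depending ``only on $n$, $\alpha$, and the Lipschitz constant of $\cD$'' as stated (a thin convex set can have Lipschitz constant $1$ but tiny inradius, so the two are not interchangeable unless the Lipschitz character is understood to include the localization scale of the graph representation, in which case a convex domain does contain a ball of controlled radius). This is harmless here: every application of the lemma in the paper uses a fixed reference domain (half-balls, or the smooth convex sets $\cD_{2r}$ obtained by rescaling a fixed convex set, under which rescaling the doubling constant is invariant since $\mu$ is homogeneous), so the dependence you obtain suffices for all the uses made of the lemma.
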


%
%
%

\begin{lemma}\label{lem:productDMO} Let $f$ and $g$ be bounded functions in a domain $\Omega$. Let $\mu$ be a Radon measure and $q\in[1,+\infty)$. If $f$ and $g$ are of $L^q(d\mu)$-DMO in $\Omega$ then so is $fg$.
\end{lemma}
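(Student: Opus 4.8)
The plan is to estimate the mean oscillation of the product $fg$ over an arbitrary set $\Omega(x_0,r) = B_r(x_0)\cap\Omega$ directly in terms of those of $f$ and $g$, using boundedness of both functions. First I would fix $x_0\in\Omega$, $r>0$, and abbreviate $\Omega_r = \Omega(x_0,r)$, $\mean{f}_{\Omega_r}^\mu$ and $\mean{g}_{\Omega_r}^\mu$ for the $\mu$-averages. The key algebraic identity is the standard splitting
\begin{equation*}
f(x)g(x) - \mean{f}_{\Omega_r}^\mu\,\mean{g}_{\Omega_r}^\mu = \bigl(f(x)-\mean{f}_{\Omega_r}^\mu\bigr)g(x) + \mean{f}_{\Omega_r}^\mu\bigl(g(x)-\mean{g}_{\Omega_r}^\mu\bigr).
\end{equation*}
Since $\mean{fg}_{\Omega_r}^\mu$ is, by definition, the $\mu$-average that minimizes the $L^2(d\mu)$ distance — or more simply, since replacing $\mean{fg}_{\Omega_r}^\mu$ by the constant $\mean{f}_{\Omega_r}^\mu\mean{g}_{\Omega_r}^\mu$ only costs a factor $2$ in the mean-oscillation seminorm — I would bound
\begin{equation*}
\left(\dashint_{\Omega_r}\bigl|fg - \mean{fg}_{\Omega_r}^\mu\bigr|^q d\mu\right)^{1/q} \le 2\left(\dashint_{\Omega_r}\bigl|fg - \mean{f}_{\Omega_r}^\mu\mean{g}_{\Omega_r}^\mu\bigr|^q d\mu\right)^{1/q}.
\end{equation*}

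Next I would apply Minkowski's inequality to the two-term splitting and use the $L^\infty$ bounds $\|g\|_{L^\infty(\Omega)}$ and $|\mean{f}_{\Omega_r}^\mu|\le\|f\|_{L^\infty(\Omega)}$ to pull the bounded factors out of the integrals:
\begin{align*}
\left(\dashint_{\Omega_r}\bigl|fg - \mean{f}_{\Omega_r}^\mu\mean{g}_{\Omega_r}^\mu\bigr|^q d\mu\right)^{1/q}
&\le \|g\|_{L^\infty(\Omega)}\left(\dashint_{\Omega_r}\bigl|f-\mean{f}_{\Omega_r}^\mu\bigr|^q d\mu\right)^{1/q}\\
&\quad + \|f\|_{L^\infty(\Omega)}\left(\dashint_{\Omega_r}\bigl|g-\mean{g}_{\Omega_r}^\mu\bigr|^q d\mu\right)^{1/q}.
\end{align*}
Taking the supremum over $x_0\in\Omega$ gives $\eta_{fg}^{q,\mu}(r)\le 2\|g\|_{L^\infty(\Omega)}\,\eta_f^{q,\mu}(r) + 2\|f\|_{L^\infty(\Omega)}\,\eta_g^{q,\mu}(r)$, which is valid for every $r\in(0,1)$.

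Finally, I would conclude by dividing by $r$ and integrating: $\int_0^1 \eta_{fg}^{q,\mu}(r)/r\,dr \le 2\|g\|_\infty\int_0^1\eta_f^{q,\mu}(r)/r\,dr + 2\|f\|_\infty\int_0^1\eta_g^{q,\mu}(r)/r\,dr < +\infty$, since both $f$ and $g$ are $L^q(d\mu)$-DMO by hypothesis; hence $\eta_{fg}^{q,\mu}$ is a Dini function and $fg$ is $L^q(d\mu)$-DMO in $\Omega$. Honestly there is no real obstacle here — the only mild point to be careful about is the passage from the genuine $\mu$-average $\mean{fg}_{\Omega_r}^\mu$ to the product of averages, which is handled by the factor-of-$2$ triangle-inequality argument above (or equivalently by the elementary fact that $\inf_c (\dashint|h-c|^q)^{1/q}$ and $(\dashint|h-\mean{h}^\mu|^q)^{1/q}$ are comparable up to a factor $2$). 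One should also note that the seminorm version of the statement, needed for the $C^{k,\omega}_{q,\mu}$ spaces used elsewhere in the paper, follows verbatim from the same estimate with $\omega(r)$ in the denominators throughout.
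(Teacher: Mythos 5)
Your proof is correct and follows essentially the same route as the paper: both rest on the Leibniz-type splitting of $fg$ minus a constant into one term controlled by $\|f\|_\infty$ times the oscillation of $g$ and another controlled by $\|g\|_\infty$ times the oscillation of $f$. The only cosmetic difference is that you compare with the constant $\mean{f}^\mu_{\Omega_r}\mean{g}^\mu_{\Omega_r}$ and invoke the factor-of-two comparability of oscillation about the mean with oscillation about an arbitrary constant, whereas the paper keeps the true average $\mean{fg}^\mu_{\Omega_r}$ and bounds the cross term by the double integral $\dashint\dashint|f(x)-f(y)|^q|g(y)|^q\,d\mu(y)\,d\mu(x)$.
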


\begin{proof}
Let $x_0\in \Omega$ and $0<r<1$ be given. Then, for $\Omega_r(x_0)=\Omega\cap B_r(x_0)$, we have
\begin{align*}
    &\dashint_{\Omega_r(x_0)}|f(x)g(x)-\mean{fg}^\mu_{\Omega_r(x_0)}|^q\,d\mu(x)\\
    &\le C\dashint_{\Omega_r(x_0)}|f(x)g(x)-f(x)\mean{g}^\mu_{\Omega_r(x_0)}|^q\,d\mu(x)\\
    &\qquad+C\dashint_{\Omega_r(x_0)}|f(x)\mean{g}_{\Omega_r(x_0)}^\mu-\mean{fg}_{\Omega_r(x_0)}^\mu|^q\,d\mu(x)\\
    &\le C\dashint_{\Omega_r(x_0)}|f(x)|^q|g(x)-\mean{g}_{\Omega_r(x_0)}^\mu|^q\,d\mu(x)\\
    &\qquad +C\dashint_{\Omega_r(x_0)}\dashint_{\Omega_r(x_0)}|f(x)-f(y)|^q \, |g(y)|^q\,d\mu(y)\,d\mu(x)\\
    &\le C\|f\|^q_\infty[\eta^{q,\mu}_g(r)]^q+C\|g\|^q_\infty[\eta_f^{q,\mu}(r)]^q.\qedhere
\end{align*}
\end{proof}

\begin{lemma}\label{lem:inclusionweight}
Let $\al\ge\be>-1$ and $q\in [1,+\infty)$. If $f$ is of $L^q(x_n^\be dx)$-DMO in $B_1^+$, then it is of $L^q(x_n^\al dx)$-DMO in $B_{1/2}^+$.
\end{lemma}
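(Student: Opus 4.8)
The plan is to compare the oscillation of $f$ with respect to the two weights on balls centered in $B_{1/2}^+$, distinguishing according to whether a ball stays away from the boundary hyperplane $\{x_n=0\}$ or touches a definite neighborhood of it. First I would fix $x_0=(x_0',(x_0)_n)\in B_{1/2}^+$ and $0<r<1/2$, set $d:=(x_0)_n$, and split into the two regimes $r\le d/2$ and $r>d/2$. In the first regime the whole ball $B_r(x_0)$ satisfies $d/2\le x_n\le 3d/2$, so $x_n^\alpha$ and $x_n^\beta$ are comparable up to the constant $(3)^{|\alpha-\beta|}$ on $B_r(x_0)$; hence the measures $d\mu_\alpha=x_n^\alpha dx$ and $d\mu_\beta=x_n^\beta dx$ are uniformly equivalent there, and therefore
$$
\left(\dashint_{B_r(x_0)}\bigl|f-\mean{f}^{\mu_\alpha}_{B_r(x_0)}\bigr|^q d\mu_\alpha\right)^{1/q}\le C\left(\dashint_{B_r(x_0)}\bigl|f-\mean{f}^{\mu_\beta}_{B_r(x_0)}\bigr|^q d\mu_\beta\right)^{1/q}\le C\,\eta_f^{q,\mu_\beta}(r),
$$
where I use the elementary fact that the mean $\mean{f}^{\mu_\alpha}$ minimizes the $L^q(d\mu_\alpha)$-deviation up to a factor $2$, so one may replace it by $\mean{f}^{\mu_\beta}$ at the cost of a constant.

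In the second regime $r>d/2$, the point is that $B_r(x_0)\cap B_{1/2}^+$ is comparable to a half-ball centered on $\{x_n=0\}$: indeed $B_r(x_0)\cap B_1^+\subseteq B_{2r}^+(x_0',0)\cap B_1^+$ and, conversely, $B_{r/2}^+(x_0',0)\cap B_1^+\subseteq B_r(x_0)\cap B_1^+$ (using $d<2r$ and $x_0\in B_{1/2}^+$, $r<1/2$, so $2r<1$ and these balls lie in $B_1^+$). Combined with the doubling property of $d\mu_\beta$ on $B_1^+$ (Lemma~\ref{lem:doubl}) and the analogous doubling for $d\mu_\alpha$, I can pass from the mean oscillation of $f$ on $B_r(x_0)$ with respect to $\mu_\alpha$ to its mean oscillation on the half-ball $B_{2r}^+(x_0',0)$ with respect to $\mu_\beta$, again using that the $\mu_\alpha$-mean can be swapped for the $\mu_\beta$-mean up to a constant, and then using doubling once more to absorb the dilation from $B_{2r}^+$ back to a controlled multiple. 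This yields a bound of the form $C\,\eta_f^{q,\mu_\beta}(Cr)$; since $r\mapsto\eta_f^{q,\mu_\beta}(r)$ enjoys the almost-monotonicity property \eqref{eq:alm-mon} (so that $\eta_f^{q,\mu_\beta}(Cr)\le C'\eta_f^{q,\mu_\beta}(r)$ for $r$ in a fixed range, with a dyadic iteration to reach the constant $C$), one concludes $\eta_f^{q,\mu_\alpha,B_{1/2}^+}(r)\le C\,\eta_f^{q,\mu_\beta}(r)$ for all $0<r<1/2$. Dividing by $r$ and integrating, the Dini property of $\eta_f^{q,\mu_\beta}$ transfers to $\eta_f^{q,\mu_\alpha}$ on $B_{1/2}^+$, proving the claim.

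The main obstacle I anticipate is the bookkeeping in the second regime: one must carefully verify the two-sided inclusion between $B_r(x_0)\cap B_{1/2}^+$ and a half-ball on $\{x_n=0\}$ with comparable radius, check that all the balls involved stay inside $B_1^+$ where the hypothesis and Lemma~\ref{lem:doubl} apply, and control the constant blow-up coming from the radius dilations using doubling and \eqref{eq:alm-mon}. The swap between $\mu_\alpha$-means and $\mu_\beta$-means is routine (it is the standard ``best constant vs.\ mean'' comparison, valid for any $q\ge1$ with $\mu$-finite measure), and the comparability $x_n^\alpha\simeq x_n^\beta$ in the first regime is immediate. If one wanted to streamline the argument, one could instead note that both $\mu_\alpha$ and $\mu_\beta$ are $A_\infty$ weights with respect to each other on $B_{1/2}^+$ in a quantitative way — but the direct split above is more elementary and self-contained.
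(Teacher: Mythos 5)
Your plan coincides with the paper's proof: the same dichotomy between the interior regime $r\le d/2$ (where $x_n\simeq d$ makes $\mu_\al=x_n^\al dx$ and $\mu_\be=x_n^\be dx$ uniformly comparable on $B_r(x_0)$) and the boundary regime $r>d/2$ (where one passes to a half-ball $B_{Cr}^+(x_0',0)$ and uses the doubling of Lemma~\ref{lem:doubl}). The one step you describe too casually is the ``swap'' between $\mu_\al$- and $\mu_\be$-averages in the boundary regime: there the two measures are \emph{not} mutually comparable, since the density ratio $x_n^{\al-\be}$ degenerates (or blows up) at $x_n=0$, so this is not a routine two-measure comparison and doubling alone does not deliver it. Replacing the $\mu_\al$-mean by an arbitrary constant costs only a factor $2$, as you say; the genuine content is the one-sided bound $x_n^{\al}\le (Cr)^{\al-\be}x_n^{\be}$ on the half-ball (this is precisely where the hypothesis $\al\ge\be$ enters, and the reverse inequality would fail), combined with the volume asymptotics $\mu_\gamma(B_\rho^+(\bar x_0))\simeq \rho^{\,n+\gamma}$, which give
\begin{equation*}
\dashint_{B}|f-c|^q\,d\mu_\al\;\le\;\frac{(Cr)^{\al-\be}\,\mu_\be(B')}{\mu_\al(B)}\,\dashint_{B'}|f-c|^q\,d\mu_\be\;\le\;C\,\dashint_{B'}|f-c|^q\,d\mu_\be
\end{equation*}
for $B=B_r(x_0)\cap B_1^+$ and $B'=B_{Cr}^+(x_0',0)$, using also a lower bound $\mu_\al(B)\ge c\,r^{n+\al}$. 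The paper carries out exactly this normalization count, in the symmetrized form $\dashint\dashint|f(x)-f(y)|^q\,d\mu(x)\,d\mu(y)$, which avoids choosing a constant altogether. With that step made explicit your argument is complete; the final appeal to the almost-monotonicity \eqref{eq:alm-mon} is not even needed, since a bound by $\eta_f^{q,\mu_\be}(Cr)$ already integrates to a Dini condition after a change of variable.
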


\begin{proof}
Let $x_0=(x_0',(x_0)_n)\in B_{1/2}^+$ and $0<r<1/6$ be given. For simplicity, we write $d\mu_\al(x)=x_n^\al dx$, $\Omega_r(x_0):=B_r(x_0)\cap B_{1/2}^+$, $d:=(x_0)_n$, and $\bar x_0:=(x_0',0)$. We consider two cases:
$$
\text{either $d/2\le r$ or $d/2>r$.}
$$
\noindent\emph{Case 1.} If $d/2\le r$, then we use $B_r(x_0)\subset B_{3r}(\bar x_0)$ and apply Lemma~\ref{lem:doubl} to get
\begin{align*}
    \dashint_{\Omega_r(x_0)}|f-\mean{f}_{\Omega_r(x_0)}^{\mu_\al}|^qd\mu_\al
    &\le C\dashint_{B^+_{3r}(\bar x_0)}\dashint_{B_{3r}^+(\bar x_0)}|f(x)-f(y)|^qd\mu_\al(x)d\mu_\al(y)\\
    &\le \frac{C}{r^{2(n+\al)}}\int_{B_{3r}^+(\bar x_0)}\int_{B_{3r}^+(\bar x_0)}|f(x)-f(y)|^qx_n^\al y_n^\al dxdy\\
    &\le \frac{C}{r^{2(n+\be)}}\int_{B_{3r}^+(\bar x_0)}\int_{B_{3r}^+(\bar x_0)}|f(x)-f(y)|^qx_n^\be y_n^\be dxdy\\
    &\le C\dashint_{B_{3r}^+(\bar x_0)}|f-\mean{f}^{\mu_\be}_{B_{3r}^+(\bar x_0)}|^qd\mu_\be\,\le\, C[\eta_f^{q,\mu_\be}(3r)]^q.
\end{align*}

\noindent\emph{Case 2.} If $d/2>r$, then by using that $d/2\le x_n<3d/2$ for every $x=(x',x_n)\in B_r(x_0)$, we can easily obtain
\begin{equation*}
    \dashint_{\Omega_r(x_0)}|f-\mean{f}_{\Omega_r(x_0)}^{\mu_\al}|^qd\mu_\al\le C\dashint_{\Omega_r(x_0)}|f-\mean{f}_{\Omega_r(x_0)}^{\mu_\be}|^qd\mu_\be\le C[\eta_f^{q,\mu_\be}(r)]^q.\qedhere
\end{equation*}
\end{proof}

\begin{lemma}\label{lem:integ-DMO}
    For $\al>-1$, let $d\mu=x_n^\al dx$. Let $q\in[1,+\infty)$ and $\beta>0$.
    If $f$ is of $L^{q}(d\mu)$-DMO in $B_1^+$, then $\hat f(x):=\int_0^1s^\beta f(x',sx_n)ds$ is of $L^{q}(d\mu)$-DMO in $B^+_{1}$.
\end{lemma}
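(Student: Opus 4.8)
The plan is to estimate, for a fixed ball $B_r(x_0)\subset B_1^+$ with center $x_0=(x_0',(x_0)_n)$ and radius $0<r<1$, the quantity
$$
\dashint_{B_r(x_0)\cap B_1^+}\bigl|\hat f(x)-\langle\hat f\rangle^\mu_{B_r(x_0)\cap B_1^+}\bigr|^q\,d\mu(x)
$$
and bound it in terms of $\eta_f^{q,\mu}$ evaluated at comparable scales. The central observation is that $\hat f(x)=\int_0^1 s^\beta f(x',sx_n)\,ds$ is an average, over the dilation parameter $s\in(0,1)$ with the finite weight $s^\beta\,ds$, of the dilated functions $f_s(x):=f(x',sx_n)$. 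By Jensen's (or Minkowski's integral) inequality applied to the convex functional $g\mapsto \dashint |g-\langle g\rangle|^q$, it suffices to control, for each fixed $s\in(0,1)$, the mean oscillation of $f_s$ on $B_r(x_0)\cap B_1^+$, and then integrate the resulting bound against $s^\beta\,ds$ using $\int_0^1 s^\beta\,ds<\infty$. Thus the problem reduces to: the map $T_s:f\mapsto f(x',sx_n)$ does not increase $L^q(d\mu)$-DMO oscillation at a controlled cost.

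First I would make the change of variables $y=(x',sx_n)$ on the set $B_r(x_0)\cap B_1^+$. Its image is contained in the (anisotropically squashed) set $\{(y',y_n): (y',y_n/s)\in B_r(x_0)\cap B_1^+\}$, which in turn is contained in a Euclidean ball (or half-ball near $\Sigma$) of radius comparable to $r$ — squashing in the $x_n$ direction only decreases distances. Under $y_n=sx_n$ we have $d\mu(x)=x_n^\alpha\,dx = s^{-\alpha-1}y_n^\alpha\,dy = s^{-\alpha-1}\,d\mu(y)$, so the weight transforms by a constant (in $y$) factor $s^{-\alpha-1}$, which cancels in normalized averages. Hence
$$
\dashint_{B_r(x_0)\cap B_1^+}\bigl|f_s-\langle f_s\rangle^\mu_{B_r(x_0)\cap B_1^+}\bigr|^q\,d\mu
\;\le\; C\,\dashint_{E_s}\bigl|f-\langle f\rangle^\mu_{E_s}\bigr|^q\,d\mu,
$$
where $E_s$ is the image region, contained in $B_{Cr}(z)\cap B_1^+$ for a suitable center $z$. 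Two regimes must be separated: when the ball is far from $\Sigma$, i.e. $(x_0)_n\ge 2r$, everything stays in the interior and the Lebesgue-type doubling of $\mu$ on balls of radius $\sim r$ gives the comparison directly; when $(x_0)_n<2r$, one uses that $B_r(x_0)\subset B_{3r}(\bar x_0)$ with $\bar x_0=(x_0',0)$, that $E_s\subset B_{3r}(\bar x_0)\cap B_1^+$, and Lemma~\ref{lem:doubl} to replace normalized averages over $E_s$ by those over $B_{3r}^+(\bar x_0)$, as in the proof of Lemma~\ref{lem:inclusionweight}. In both cases the right-hand side is bounded by $C[\eta_f^{q,\mu}(Cr)]^q$, uniformly in $s\in(0,1)$.

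Combining these pieces: by Jensen in $s$,
$$
\Bigl(\dashint_{B_r(x_0)\cap B_1^+}\bigl|\hat f-\langle\hat f\rangle^\mu\bigr|^q\,d\mu\Bigr)^{1/q}
\le C\int_0^1 s^\beta\Bigl(\dashint_{B_r(x_0)\cap B_1^+}\bigl|f_s-\langle f_s\rangle^\mu\bigr|^q\,d\mu\Bigr)^{1/q}ds
\le C\,\eta_f^{q,\mu}(Cr),
$$
so $\eta_{\hat f}^{q,\mu}(r)\le C\,\eta_f^{q,\mu}(Cr)$ on $B_1^+$ (shrinking the ambient ball slightly if needed to keep $Cr<1$, which costs only the standard almost-monotonicity \eqref{eq:alm-mon} of $\eta_f^{q,\mu}$ and a harmless restriction to $B_{1/2}^+$, exactly as in Lemma~\ref{lem:inclusionweight}). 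Since $\int_0^1 \eta_f^{q,\mu}(Cr)/r\,dr = \int_0^C \eta_f^{q,\mu}(\rho)/\rho\,d\rho<\infty$ by the Dini assumption on $f$, the function $\hat f$ is $L^q(d\mu)$-DMO in $B_1^+$. I also need the trivial bound $\|\hat f\|_\infty\le\frac{1}{\beta+1}\|f\|_\infty$ so that $\hat f$ is genuinely admissible. The main obstacle — really the only subtlety — is handling the comparison of averages across the two regimes (center near versus far from $\Sigma$) and making sure the squashing map $x\mapsto(x',sx_n)$ is handled with the correct Jacobian and weight transformation so that no $s$-dependent blow-up enters; this is where Lemma~\ref{lem:doubl} is essential near the flat boundary.
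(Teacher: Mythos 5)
Your reduction via Minkowski/Jensen to the fixed-$s$ oscillation of $f_s(x)=f(x',sx_n)$ is exactly the paper's starting point, and the change of variables with the constant Jacobian factor $s^{-\al-1}$ is also right. The gap is in the step where you claim that the normalized oscillation of $f$ over the image region $E_s$ is bounded by $C[\eta_f^{q,\mu}(Cr)]^q$ \emph{uniformly in $s$}. The image of $B_r(x_0)\cap B_1^+$ under $x\mapsto(x',sx_n)$ is a degenerate ellipsoid of $x'$-extent $\sim r$ but $x_n$-extent only $\sim sr$. It is indeed contained in a ball of radius $Cr$, but replacing the normalized average over $E_s$ by the one over that containing ball costs the measure ratio $\mu(B_{Cr}\cap B_1^+)/\mu(E_s)$, which is of order $s^{-(1+\al)}$ (e.g.\ near $\Sigma$, $\mu(E_s)\sim r^{n-1}(sr)^{1+\al}$ while $\mu(B_{Cr}^+)\sim r^{n+\al}$). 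Lemma \ref{lem:doubl} compares concentric balls with bounded radius ratio and cannot absorb this degeneracy. After taking $q$-th roots you are left with a factor $s^{-(1+\al)/q}$, and $\int_0^1 s^{\be}s^{-(1+\al)/q}\,ds$ diverges unless $\be>(1+\al)/q-1$, which the hypotheses do not guarantee — and which fails precisely in the case the lemma is used for ($\al=2$, $q=1$, $\be=1$ in the proof of Theorem \ref{thm:BHP}).

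The paper's proof handles this by a covering-and-chaining argument adapted to the flat ellipsoid: $E_r^s(x_0)$ is covered by $\sim s^{-(n-1)}$ balls of the \emph{small} radius $\sqrt n\,sr$ centered on $\{x_n=s(x_0)_n\}$, pairs of non-adjacent balls are connected by chains of length $\le C/s$, and H\"older's inequality along the chain yields the bound
\begin{equation*}
\left(\dashint\dashint|f(x',sx_n)-f(y',sy_n)|^q\,d\mu\,d\mu\right)^{1/q}\le \frac{C}{s}\,\eta_f^{q,\mu}(2\sqrt n\, sr),
\end{equation*}
i.e.\ a loss of only $1/s$ but at the shrunken scale $sr$. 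The factor $1/s$ is then absorbed by the weight $s^\be$ with $\be>0$, and a Fubini computation shows that $\omega(r)=\int_0^1 s^{\be-1}\eta_f^{q,\mu}(2\sqrt n\, sr)\,ds$ is Dini. Your argument is missing this chaining idea, and without it (or some substitute giving a bound that degenerates slower than $s^{-(1+\al)/q}$, or that operates at scale $sr$), the final integration over $s$ does not close.
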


\begin{proof}
Given $x_0\in B_{1}^+$ and $0<r<\frac1{2\sqrt n}$, we have by applying Minkowski's integral inequality and Jensen's inequality
\begin{align*}
    &\left(\dashint_{B_r(x_0)\cap B_1^+}|\hat f-\mean{\hat f}^\mu_{B_r(x_0)\cap B_1^+}|^{q} d\mu\right)^{1/{q}}\\
    & =\left(\dashint_{B_r(x_0)\cap B_1^+}\left|\int_0^1s^\be(f(x',sx_n)-\mean{f(\cdot,s\cdot)}^\mu_{B_r(x_0)\cap B_1^+})ds\right|^{q}d\mu(x)\right)^{1/{q}}\\
    &\le \int_0^1s^\be\left(\dashint_{B_r(x_0)\cap B_1^+}|f(x',sx_n)-\mean{f(\cdot,s\cdot)}^\mu_{B_r(x_0)\cap B_1^+}|^{q}d\mu(x)\right)^{1/{q}}ds\\
    &\le \int_0^1s^{\beta}\left(\dashint_{B_r(x_0)\cap B_1^+}\dashint_{B_r(x_0)\cap B_1^+}|f(x',sx_n)-f(y',sy_n)|^{q} d\mu(x)d\mu(y)\right)^{1/{q}}ds.
\end{align*}
We claim that there is a dimensional constant $C>0$ such that for each $0<s<1$,
\begin{align}
    \label{eq:integ-DMO-claim}
    &\left(\dashint_{B_r(x_0)\cap B_1^+}\dashint_{B_r(x_0)\cap B_1^+}|f(x',sx_n)-f(y',sy_n)|^{q} d\mu(x)d\mu(y)\right)^{1/{q}}\notag\\
    &\le \frac{C\eta_f^{q,\mu}(2\sqrt n sr)}{s}.
\end{align}
Suppose now the claim is true. Then, we readily have
$$
\left(\dashint_{B_r(x_0)\cap B_1^+}|\hat f-\mean{\hat f}^\mu_{B_r(x_0)\cap B_1^+}|^{q} d\mu\right)^{1/{q}}\le C\int_0^1 s^{\beta-1}\eta_f^{q,\mu}(2\sqrt n sr)ds.
$$
Here, $\omega(r):=\int_0^1 s^{\beta-1}\eta_f^{q,\mu}(2\sqrt n sr)ds$ is a Dini function since
\begin{align*}
    \int_0^{\frac{1}{2\sqrt n}}\frac{\omega(r)}rdr&=\int_0^1s^{\be-1}\int_0^{\frac1{2\sqrt n}}\frac{\eta_f^{q,\mu}(2\sqrt n sr)}rdrds=\int_0^1s^{\be-1}\int_0^s\frac{\eta_f^{q,\mu}(t)}tdtds\\
    &\le \left(\int_0^1\frac{\eta_f^{q,\mu}(t)}tdt\right)\left(\int_0^1s^{\be-1}ds\right)<+\infty.
\end{align*}
To close the argument, we need to verify \eqref{eq:integ-DMO-claim}, which by a change of variables is equivalent to
\begin{equation}    \label{eq:integ-DMO}
\begin{aligned}
    &\int_{E_r^s(x_0)\cap B_1^+}\int_{E_r^s(x_0)\cap B_1^+}|f(x)-f(y)|^{q} d\mu(x)d\mu(y)\\
    &\le C[\eta_f^{q,\mu}(2\sqrt n sr)]^{q}[\mu(B_r(x_0)\cap B_1^+)]^2s^{2-q+2\al},
\end{aligned}
\end{equation}
where $E_r^s(x_0)$ are ellipsoids defined by
$$
E_r^s(x_0):=\left\{x=(x',x_n)\in \R^n\,:\, |x'-x_0'|^2+\frac{|x_n-s(x_0)_n)|^2}{s^2}<r^2\right\}.
$$
To prove \eqref{eq:integ-DMO}, we cover the ellipsoid $E_r^s(x_0)$ by hypercubes of length $2sr$ in the following way:
\begin{align*}
        &\text{- each center of the hypercube lies on $\{x_n=s(x_0)_n\}$,}\\
        &\text{- every intersection of two hypercubes has zero $n$-dimensional measure.}
\end{align*}
Note that the number of hypercubes is bounded by $C/{s^{n-1}}$ for some dimensional constant $C>0$. Next, we cover each hypercube by the concentric ball of radius $\sqrt n sr$. Then we can write
$$
E_r^s(x_0)\subset \bigcup_{B\in \mathcal{F}}B,
$$
where each element $B$ is a ball of radius $\sqrt n sr$ centered on $\{x_n=s(x_0)_n\}$ and $n(\mathcal{F})\le C/{s^{n-1}}$.

We assert that given arbitrary two balls in $\mathcal{F}$, say $\bar B$ and $\tilde B$, it holds that
\begin{equation}
    \label{eq:integ-DMO-est}
\begin{aligned}
&    \int_{\bar B\cap B_1^+}\int_{\tilde B\cap B_1^+}|f(x)-f(y)|^{q}d\mu(x)d\mu(y)\\
 &   \le Cs^{-q}[\mu(B_{2\sqrt n sr}(sx_0)\cap B_1^+)]^2[\eta_f^{q,\mu}(2\sqrt n sr)]^{q}.
\end{aligned}
\end{equation}
Indeed, if $\bar B$ and $\tilde B$ are adjacent (i.e., $\bar B\cap \tilde B\neq\emptyset$), then $\bar B\cup \tilde B\subset \hat B$ for a ball $\hat B$ centered on $\{x_n=s(x_0)_n\}$ of radius $2\sqrt n sr$. This readily gives
\begin{align*}
    &\int_{\bar B\cap B_1^+}\int_{\tilde B\cap B_1^+}|f(x)-f(y)|^{q}d\mu(x)d\mu(y)\\
    &\qquad\le \int_{\hat B\cap B_1^+}\int_{\hat B\cap B_1^+}|f(x)-f(y)|^{q}d\mu(x)d\mu(y)\\
    &\qquad\le C[\mu(B_{2\sqrt n sr}(sx_0)\cap B_1^+)]^2[\eta_f^{q,\mu}(2\sqrt n sr)]^{q}.
\end{align*}
On the other hand, if $\bar B$ and $\tilde B$ are not adjacent, we connect them by a chain of balls in $\mathcal{F}$, with a length of the chain bounded by $C/s$ for some $C=C(n)>0$. That is, we consider a sequence of balls $\bar B=B^0, B^1,\cdots,B^{J-1},B^J=\tilde B$ such that $J\le C/s$, $B^{j-1}\cap B^j\neq\emptyset$ and $B^j\in \mathcal{F}$, $1\le j\le J$. Then by H\"older's inequality
\begin{align*}
    &\int_{B^0\cap B_1^+}\int_{B^J\cap B_1^+}|f(x^0)-f(x^J)|^{q} d\mu(x^J)d\mu(x^0)\\
    &\qquad\begin{multlined}\le\int_{B^0\cap B_1^+}\int_{B^J\cap B_1^+}\dashint_{B^1\cap B_1^+}\cdots\dashint_{B^{J-1}\cap B_1^+}\\
    \qquad\qquad J^{q-1}\sum_{j=1}^J|f(x^{j-1})-f(x^j)|^{q} d\mu(x^{J-1})\cdots d\mu(x^1)d\mu(x^J)d\mu(x^0)\end{multlined}\\
    &\qquad= J^{q-1}\sum_{j=1}^J\int_{B^{j-1}\cap B_1^+}\int_{B^j\cap B_1^+}|f(x^{j-1})-f(x^j)|^{q} d\mu(x^j)d\mu(x^{j-1})\\
    &\qquad\le CJ^{q}[\mu(B_{2\sqrt n sr}(sx_0)\cap B_1^+)]^2[\eta_f^{q,\mu}(2\sqrt n sr)]^{q}\\
    &\qquad\le Cs^{-q}[\mu(B_{2\sqrt n sr}(sx_0)\cap B_1^+)]^2[\eta_f^{q,\mu}(2\sqrt n sr)]^{q},
\end{align*}
and hence \eqref{eq:integ-DMO-est} is proved.

Now, by using \eqref{eq:integ-DMO-est}, we obtain
\begin{align*}
    &\int_{E_r^s(x_0)\cap B_1^+}\int_{E_r^s(x_0)\cap B_1^+}|f(x)-f(y)|^{q} d\mu(x)d\mu(y)\\
    &\qquad \le \sum_{1\le i,j\le C/{s^{n-1}}}\int_{B_{sr}(z_i)}\int_{B_{sr}(z_j)}|f(x)-f(y)|^{q} d\mu(x)d\mu(y)\\
    &\qquad\le Cs^{-(2n-2+q)}[\mu(B_{2\sqrt n sr}(sx_0)\cap B_1^+)]^2[\eta_f^{q,\mu}(2\sqrt n sr)]^{q}.
\end{align*}
Finally, by using $\mu(B_{2\sqrt n sr}(sx_0)\cap B_1^+)\le C\mu(B_{sr}(sx_0)\cap B_1^+)\le Cs^{n+\al}\mu(B_r(x_0)\cap B_1^+)$, we conclude \eqref{eq:integ-DMO}.
\end{proof}

Lemma~\ref{lem:integ-DMO} has an immediate corollary.
\begin{corollary}\label{cor:integ-DMO-k}
   Let $\al$, $\mu$, $q$ and $\be$ be as in Lemma~\ref{lem:integ-DMO}. Given $k\in \mathbb{N}$ and a Dini function $\omega$, if $f\in C^{k,\omega}_{q,\mu}(B_1^+)$, then $\hat f(x):=\int_0^1s^\be f(x',sx_n)ds\in C^{k,\bar\omega}_{q,\mu}(B^+_{1})$ for some Dini function $\bar\omega$.
\end{corollary}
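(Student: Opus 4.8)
The plan is to reduce the claim to Lemma~\ref{lem:integ-DMO} via a differentiation-under-the-integral computation. First I would prove that for every multiindex $\delta=(\delta',\delta_n)$ with $|\delta|\le k$,
\[
D^\delta\hat f(x)=\int_0^1 s^{\beta+\delta_n}(D^\delta f)(x',sx_n)\,ds,\qquad x\in B_1^+,
\]
by induction on $|\delta|$: a tangential derivative $\partial_{x_i}$ with $1\le i\le n-1$ passes directly through the integral, while the chain rule $\partial_{x_n}[g(x',sx_n)]=s\,(\partial_n g)(x',sx_n)$ shows that each normal derivative raises the power of $s$ by one. Since $f\in C^k(\overline{B_1^+})$ and $(x',sx_n)\in\overline{B_1^+}$ whenever $(x',x_n)\in\overline{B_1^+}$ and $s\in[0,1]$, the integrand and all of its derivatives up to order $k$ are continuous and bounded on $\overline{B_1^+}\times[0,1]$, so differentiation under the integral is legitimate; the same formula, combined with dominated convergence and the uniform continuity of $D^\delta f$ on $\overline{B_1^+}$, gives $\hat f\in C^k(\overline{B_1^+})$. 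This settles the $C^k$ component of the $C^{k,\bar\omega}_{q,\mu}$-norm.

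Next, for $|\delta|=k$ the displayed identity exhibits $D^\delta\hat f$ as exactly the integral operator of Lemma~\ref{lem:integ-DMO} applied to $D^\delta f$, with the exponent $\beta$ there replaced by $\beta+\delta_n\ge\beta>0$, so the lemma still applies. As $f\in C^{k,\omega}_{q,\mu}(B_1^+)$, the derivative $D^\delta f$ is of $L^q(d\mu)$-DMO in $B_1^+$, and Lemma~\ref{lem:integ-DMO} (with exponent $\beta+\delta_n$) yields that $D^\delta\hat f$ is of $L^q(d\mu)$-DMO in $B_1^+$. Reading off the quantitative bound from the proof of that lemma and using $s^{\beta+\delta_n-1}\le s^{\beta-1}$ on $(0,1)$ together with $\eta^{q,\mu}_{D^\delta f}\le C\omega$, one obtains $\eta^{q,\mu}_{D^\delta\hat f}(r)\le C\int_0^1 s^{\beta-1}\omega(2\sqrt n\,sr)\,ds=:\bar\omega(r)$, a bound uniform over all $\delta$ with $|\delta|=k$. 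That $\bar\omega$ is a Dini function follows from the same Fubini argument as in Lemma~\ref{lem:integ-DMO}, namely $\int_0^{1/(2\sqrt n)}\bar\omega(r)/r\,dr=C\int_0^1 s^{\beta-1}\int_0^s\omega(t)/t\,dt\,ds\le C\big(\int_0^1\omega(t)/t\,dt\big)\big(\int_0^1 s^{\beta-1}\,ds\big)<+\infty$ since $\beta>0$; for $r$ away from $0$ there is nothing to check, as all quantities are bounded on $B_1^+$. Together with the first step this gives $\hat f\in C^{k,\bar\omega}_{q,\mu}(B_1^+)$. (Alternatively one may invoke Lemma~\ref{lem:integ-DMO} purely as a black box for each $D^\delta f$, obtaining some Dini modulus $\bar\omega_\delta$, and then take $\bar\omega:=\sum_{|\delta|=k}\bar\omega_\delta$.)

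I do not expect a genuine obstacle; the substance of the proof is the reduction above. The only points needing a little care are the bookkeeping of the $s$-powers in the differentiation formula — one must see that the exponent appearing is exactly $\beta+\delta_n$ and hence never falls below $\beta$, which is what keeps the hypothesis $\beta>0$ of Lemma~\ref{lem:integ-DMO} in force — and the routine justification of differentiation under the integral sign and of the inclusion $\hat f\in C^k(\overline{B_1^+})$ up to the flat boundary, both resting on $f\in C^k(\overline{B_1^+})$ and on the containment $(x',sx_n)\in\overline{B_1^+}$.
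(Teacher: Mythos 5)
Your proof is correct and is exactly the argument the paper leaves implicit (the corollary is stated without proof): differentiate under the integral to get $D^\delta\hat f(x)=\int_0^1 s^{\beta+\delta_n}(D^\delta f)(x',sx_n)\,ds$ and apply Lemma~\ref{lem:integ-DMO} with exponent $\beta+\delta_n\ge\beta>0$ to each top-order derivative. The bookkeeping of the $s$-powers and the Fubini check that $\bar\omega$ is Dini are both handled correctly.
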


{\bf Data Availability Statements.} Data sharing not applicable to this article as
no datasets were generated or analysed during the current study.

\section*{Acknowledgment}
H. Dong was partially supported by a Simons fellowship, grant no. 007638 and the NSF under agreement DMS-2055244. S. Jeon is supported by the Academy of Finland grant 347550. S. Vita is research fellow of Istituto Nazionale di Alta Matematica INDAM group GNAMPA, supported by the MUR funding for Young Researchers - Seal of Excellence SOE\_0000194 \emph{(ADE) Anomalous diffusion equations: regularity and geometric properties of solutions and free boundaries}, and supported also by the PRIN project 2022R537CS \emph{$NO^3$ - Nodal Optimization, NOnlinear elliptic equations, NOnlocal geometric problems, with a focus on regularity}.




\begin{bibdiv}
\begin{biblist}

\bib{ApuNaz16}{article}{
   author={Apushkinskaya, Darya E.},
   author={Nazarov, Alexander I.},
   title={A counterexample to the Hopf-Oleinik lemma (elliptic case)},
   journal={Anal. PDE},
   volume={9},
   date={2016},
   number={2},
   pages={439--458},
   issn={2157-5045},
   review={\MR{3513140}},
   doi={10.2140/apde.2016.9.439},
   }

\bib{ApuNaz19}{article}{
   author={Apushkinskaya, Darya E.},
   author={Nazarov, Alexander I.},
   title={On the boundary point principle for divergence-type equations},
   journal={Atti Accad. Naz. Lincei Rend. Lincei Mat. Appl.},
   volume={30},
   date={2019},
   number={4},
   pages={677--699},
   issn={1120-6330},
   review={\MR{4030346}},
   doi={10.4171/RLM/867},
   }

\bib{BanGar16}{article}{
   author={Banerjee, Agnid},
   author={Garofalo, Nicola},
   title={A parabolic analogue of the higher-order comparison theorem of De
   Silva and Savin},
   journal={J. Differential Equations},
   volume={260},
   date={2016},
   number={2},
   pages={1801--1829},
   issn={0022-0396},
   review={\MR{3419746}},
   doi={10.1016/j.jde.2015.09.044},
}

\bib{ChoKimLee20}{article}{
   author={Choi, Jongkeun},
   author={Kim, Seick},
   author={Lee, Kyungrok},
   title={Gradient estimates for elliptic equations in divergence form with partial {D}ini mean oscillation coefficients},
   journal={J. Korean Math. Soc.},
   volume={57},
   date={2020},
   number={6},
   pages={1509--1533},
   issn={0304-9914},
   review={\MR{4169354}},
   doi={10.4134/JKMS.j190777},
   }

\bib{DeSSav15}{article}{
   author={De Silva, Daniela},
   author={Savin, Ovidiu},
   title={A note on higher regularity boundary {H}arnack inequality},
   journal={Discrete Contin. Dyn. Syst.},
   volume={35},
   date={2015},
   number={12},
   pages={6155--6163},
   issn={1078-0947},
   review={\MR{3393271}},
   doi={10.3934/dcds.2015.35.6155},
   }

\bib{Don12}{article}{
   author={Dong, Hongjie},
   title={Gradient estimates for parabolic and elliptic systems from linear laminates},
   journal={Arch. Ration. Mech. Anal.},
   volume={205},
   date={2012},
   number={1},
   pages={119--149},
   issn={0003-9527},
   review={\MR{2927619}},
   doi={10.1007/s00205-012-0501-z},
}

\bib{DonEscKim18}{article}{
   author={Dong, Hongjie},
   author={Escauriaza, Luis},
   author={Kim, Seick},
   title={On {$C^1$}, {$C^2$}, and weak type-{$(1,1)$} estimates for linear elliptic operators: part {II}},
   journal={Math. Ann.},
   volume={370},
   date={2018},
   number={1-2},
   pages={447--489},
   issn={0025-5831},
   review={\MR{3747493}},
   doi={10.1007/s00208-017-1603-6},
}

\bib{DonKim17}{article}{
   author={Dong, Hongjie},
   author={Kim, Seick},
   title={On {$C^1$}, {$C^2$}, and weak type-{$(1,1)$} estimates for linear elliptic operators},
   journal={Comm. Partial Differential Equations},
   volume={42},
   date={2017},
   number={3},
   pages={417--435},
   issn={0360-5302},
   review={\MR{3620893}},
   doi={10.1080/03605302.2017.1278773},
}

\bib{DonLeeKim20}{article}{
   author={Dong, Hongjie},
   author={Lee, Jihoon},
   author={Kim, Seick},
   title={On conormal and oblique derivative problem for elliptic equations with {D}ini mean oscillation coefficients},
   journal={Indiana Univ. Math. J.},
   volume={69},
   date={2020},
   number={6},
   pages={1815--1853},
   issn={0022-2518},
   review={\MR{4170081}},
   doi={10.1512/iumj.2020.69.8028},
}

\bib{DonXu19}{article}{
   author={Dong, Hongjie},
   author={Xu, Longjuan},
   title={Gradient estimates for divergence form elliptic systems arising from composite material},
   journal={SIAM J. Math. Anal.},
   volume={51},
   date={2019},
   number={3},
   pages={2444--2478},
   issn={0036-1410},
   review={\MR{3961984}},
   doi={10.1137/18M1226658},
}

\bib{DP_CVPDE21}{article}{
   author={Dong, Hongjie},
   author={Phan, Tuoc},
   title={Regularity for parabolic equations with singular or degenerate
   coefficients},
   journal={Calc. Var. Partial Differential Equations},
   volume={60},
   date={2021},
   number={1},
   pages={Paper No. 44, 39},
   issn={0944-2669},
   review={\MR{4204570}},
}

\bib{DP_TAMS21}{article}{
   author={Dong, Hongjie},
   author={Phan, Tuoc},
   title={Parabolic and elliptic equations with singular or degenerate
   coefficients: the Dirichlet problem},
   journal={Trans. Amer. Math. Soc.},
   volume={374},
   date={2021},
   number={9},
   pages={6611--6647},
   issn={0002-9947},
   review={\MR{4302171}},
}

\bib{DonPha20}{article}{
   author={Dong, Hongjie},
   author={Phan, Tuoc},
   title={On parabolic and elliptic equations with singular or degenerate coefficients},
   journal={Indiana Univ. Math. J.},
   volume={72},
   date={2023},
   number={4},
   pages={1461--1502},
   issn={0022-2518},
   review={\MR{4637368}},
}

\bib{DP_JFA}{article}{
   author={Dong, Hongjie},
   author={Phan, Tuoc},
   title={Weighted mixed-norm $L_p$ estimates for equations in
   non-divergence form with singular coefficients: the Dirichlet problem},
   journal={J. Funct. Anal.},
   volume={285},
   date={2023},
   number={2},
   pages={Paper No. 109964, 43},
   issn={0022-1236},
   review={\MR{4571874}},
}

 \bib{Fri34}{article}{
   author={Friedrichs, Kurt},
   title={\"{U}ber ein Minimumproblem f\"{u}r Potentialstr\"{o}mungen mit
   freiem Rande},
   language={German},
   journal={Math. Ann.},
   volume={109},
   date={1934},
   number={1},
   pages={60--82},
   issn={0025-5831},
   review={\MR{1512880}},
   doi={10.1007/BF01449125},
}

\bib{JeoVit23}{article}{
   author={Jeon, Seongmin},
   author={Vita, Stefano},
   title={Higher order boundary Harnack principles in Dini type domains},
   journal={J. Differential Equations},
   volume={412},
   date={2024},
   pages={808-856},
   doi={10.1016/j.jde.2024.08.059},
}

\bib{KinNir77}{article}{
   author={Kinderlehrer, David},
   author={Nirenberg, Louis},
   title={Regularity in free boundary problems},
   journal={Ann. Scuola Norm. Sup. Pisa Cl. Sci.},
   volume={4},
   date={1977},
   number={2},
   pages={373--391},
   issn={0391-173X},
   review={\MR{0440187}},
}

\bib{Kuk22}{article}{
   author={Kukuljan, Teo},
   title={Higher order parabolic boundary Harnack inequality in $C^1$ and
   $C^{k,\alpha}$ domains},
   journal={Discrete Contin. Dyn. Syst.},
   volume={42},
   date={2022},
   number={6},
   pages={2667--2698},
   issn={1078-0947},
   review={\MR{4421508}},
   doi={10.3934/dcds.2021207},
}

\bib{Li17}{article}{
   author={Li, Yanyan},
   title={On the {$C^1$} regularity of solutions to divergence form elliptic systems with {D}ini-continuous coefficients},
   journal={Chinese Ann. Math. Ser. B},
   volume={38},
   date={2017},
   pages={489--496},
   issn={0252-9599},
   review={\MR{3615500}},
   doi={10.1007/s11401-017-1079-4},
}

 \bib{LinLin22}{article}{
   author={Lin, Fanghua},
   author={Lin, Zhengjiang},
   title={Boundary Harnack principle on nodal domains},
   journal={Sci. China Math.},
   volume={65},
   date={2022},
   number={12},
   pages={2441--2458},
   issn={1674-7283},
   review={\MR{4514975}},
   doi={10.1007/s11425-022-2016-3},
}

\bib{LogMal15}{article}{
   author={Logunov, Alexander},
   author={Malinnikova, Eugenia},
   title={On ratios of harmonic functions},
   journal={Adv. Math.},
   volume={274},
   date={2015},
   pages={241--262},
   issn={0001-8708},
   review={\MR{3318150}},
   doi={10.1016/j.aim.2015.01.009},
}

\bib{LogMal16}{article}{
   author={Logunov, Alexander},
   author={Malinnikova, Eugenia},
   title={Ratios of harmonic functions with the same zero set},
   journal={Geom. Funct. Anal.},
   volume={26},
   date={2016},
   number={3},
   pages={909--925},
   issn={1016-443X},
   review={\MR{3540456}},
   doi={10.1007/s00039-016-0369-4},
}

\bib{RenSirSoa22}{article}{
   author={Rend\'{o}n, Fiorella},
   author={Sirakov, Boyan},
   author={Soares, Mayra},
   title={Boundary weak Harnack estimates and regularity for elliptic PDE in
   divergence form},
   journal={Nonlinear Anal.},
   volume={235},
   date={2023},
   pages={Paper No. 113331, 13},
   issn={0362-546X},
   review={\MR{4617058}},
   doi={10.1016/j.na.2023.113331},
}

\bib{SirTerVit21a}{article}{
   author={Sire, Yannick},
   author={Terracini, Susanna},
   author={Vita, Stefano},
   title={Liouville type theorems and regularity of solutions to degenerate
   or singular problems part I: even solutions},
   journal={Comm. Partial Differential Equations},
   volume={46},
   date={2021},
   number={2},
   pages={310--361},
   issn={0360-5302},
   review={\MR{4207950}},
   doi={10.1080/03605302.2020.1840586},
}

 \bib{SirTerVit21b}{article}{
   author={Sire, Yannick},
   author={Terracini, Susanna},
   author={Vita, Stefano},
   title={Liouville type theorems and regularity of solutions to degenerate
   or singular problems part II: odd solutions},
   journal={Math. Eng.},
   volume={3},
   date={2021},
   number={1},
   pages={Paper No. 5, 50},
   review={\MR{4144100}},
   doi={10.3934/mine.2021005},
}

 \bib{TerTorVit22}{article}{
   author={Terracini, Susanna},
   author={Tortone, Giorgio},
   author={Vita, Stefano},
   title={Higher order boundary Harnack principle via degenerate equations},
   journal={Arch. Ration. Mech. Anal.},
   volume={248},
   date={2024},
   number={2},
   pages={Paper No. 29, 44},
   issn={0003-9527},
   review={\MR{4726059}},
   doi={10.1007/s00205-024-01973-1},
}

\end{biblist}
\end{bibdiv}
\end{document}